\newcommand{\bea}{\begin{eqnarray}} 
\newcommand{\eea}{\end{eqnarray}} 
\newcommand{\bee}{\begin{eqnarray*}} 
\newcommand{\eee}{\end{eqnarray*}} 
\newcommand{\al}{\begin{align*}} 
\newcommand{\eal}{\end{align*}} 
\newcommand{\be}{\begin{equation}} 
\newcommand{\ee}{\end{equation}} 
\newcommand{\bem}{\begin{pmatrix}} 
\newcommand{\eem}{\end{pmatrix}} 
\def\a{\alpha} 
\def\c{\gamma}
\def\f{\phi}
\def\m{\mu}
\def\s{\sigma}            
\def\t{\tau} 
\def\th{\theta}
\def\D{\Delta}
\newcolumntype{R}{ >{$}r <{$}}
\newcolumntype{C}{ >{$}c <{$}}
\newcolumntype{L}{ >{$}l <{$}}
\newcolumntype{F}{>{\centering\arraybackslash}m{1.5cm}}
\def\ll{\ell}
\newcommand{\gt}[1]{\mathfrak{#1}}
\newcommand{\comment}[1]{}
\newcommand{\RR}{{\mathbb R}}
\newcommand{\CC}{{\mathbb C}}
\newcommand{\ZZ}{{\mathbb Z}}
\newcommand{\QQ}{{\mathbb Q}}
\newcommand{\HH}{{\mathbb H}}
\newcommand{\cA}{{\mathcal A}}
\newcommand{\Aut}{\operatorname{Aut}}
\newcommand{\Span}{\operatorname{Span}}
\def\rad{\operatorname{r}}
\newcommand{\tr}{\operatorname{{tr}}}
\newcommand{\str}{\operatorname{{str}}}
\newcommand{\Dih}{{\textsl{Dih}}}
\newcommand{\xmod}{{\rm \;mod\;}}
\newcommand{\new}{{\rm new}}
\newcommand{\PSL}{\operatorname{\textsl{PSL}}}    
\newcommand{\SL}{\operatorname{\textsl{SL}}}      
\newcommand{\Mpt}{\operatorname{\widetilde{\textsl{SL}_2}}}      
\newcommand{\Sp}{\operatorname{\textsl{Sp}}}      
\newcommand{\G}{\Gamma}	
\newcommand{\g}{\gamma}	
\newcommand{\rs}{{X}}	
\newcommand{\MM}{\mathbb{M}}	
\newcommand{\Th}{\Theta} 
\newtheorem{thm}{Theorem}[section]
\newtheorem{cor}[thm]{Corollary}
\newtheorem{lem}[thm]{Lemma}
\newtheorem{prop}[thm]{Proposition}
\newtheorem{conj}[thm]{Conjecture}
\theoremstyle{definition}
\theoremstyle{remark}
\numberwithin{equation}{section}
\begin{document}

\setstretch{1.4}

\title{
\vspace{-35pt}
    \textsc{
    		\huge{{W}eight {O}ne {J}acobi {F}orms and {U}mbral {M}oonshine}
        }
    }

\renewcommand{\thefootnote}{\fnsymbol{footnote}} 
\footnotetext{\emph{MSC2010:} 11F22, 11F37, 11F46, 11F50, 20C34}     
\renewcommand{\thefootnote}{\arabic{footnote}} 



\author{
	Miranda C. N. Cheng\footnote{
	Institute of Physics and Korteweg-de Vries Institute for Mathematics,
	University of Amsterdam, Amsterdam, the Netherlands.
	On leave from CNRS, France.
	{\em E-mail:} {\tt mcheng@uva.nl}
		}\\
	John F. R. Duncan\footnote{
         Department of Mathematics and Computer Science,
         Emory University,
         Atlanta, GA 30322,
         U.S.A.
         {\em E-mail:} {\tt john.duncan@emory.edu}
                  }\\
	Jeffrey A. Harvey\footnote{
	Enrico Fermi Institute and Department of Physics,
         University of Chicago,
         Chicago, IL 60637,
         U.S.A.
         {\em E-mail:} {\tt j-harvey@uchicago.edu}
          }
}

\date{}

\maketitle
\abstract{
We analyze holomorphic Jacobi forms of weight one with level.
One such form plays an important role in umbral moonshine, leading to simplifications of the statements of the umbral moonshine conjectures. We prove that non-zero holomorphic Jacobi forms of weight one do not exist for many combinations of index and level, and use this to establish a characterization of the McKay--Thompson series of umbral moonshine in terms of Rademacher sums.
}

\clearpage

\tableofcontents

\clearpage

\section{Introduction}\label{sec:intro}

Umbral moonshine \cite{UM,MUM} attaches distinguished vector-valued mock modular forms to automorphisms of Niemeier lattices. To be specific, let $X$ be the root system of a Niemeier lattice (i.e. any self-dual even positive definite lattice of rank $24$, other than the Leech lattice). Define $G^X:=\Aut(N^X)/W^X$, where $N^X$ is the self-dual lattice associated to $X$ by Niemeier's classification \cite{Nie_DefQdtFrm24} (cf. also \cite{MR1662447}), and $W^X$ is the subgroup of $\Aut(N^X)$ generated by reflections in root vectors. Then \cite{MUM} describes an assignment $g\mapsto H^X_g$ of vector-valued holomorphic functions---the umbral McKay--Thompson series---to elements $g\in G^X$. (A very explicit description of this assignment appears in \S B of \cite{umrec}.)

The situation is analogous to monstrous moonshine \cite{MR554399}, where holomorphic functions $T_m$---the monstrous McKay--Thompson series---are attached to monster elements $m\in \MM$. In this case the $T_m$ are distinguished in that they are the normalized principal moduli (i.e. normalized hauptmoduls) attached to genus zero groups $\Gamma_m<\SL_2(\RR)$. Thanks to work \cite{borcherds_monstrous} of Borcherds, we know that they are also the graded trace functions arising from the action of $\MM$ on the graded infinite-dimensional $\MM$-module $V^{\natural}=\bigoplus_{n\geq -1}V^{\natural}_n$ constructed by Frenkel--Lepowsky--Meurman \cite{FLMPNAS,FLMBerk,FLM}.

Conjectures are formulated in \S6 of \cite{MUM}, in order to identify analogues for the $H^X_g$ of these two properties of the monstrous McKay--Thompson series. For the analogue of the normalized principal modulus property (also known as the genus zero property of monstrous moonshine), the notion of {optimal growth} is formulated in 
\cite{MUM}, following the work \cite{Dabholkar:2012nd} of Dabholkar--Murthy--Zagier.

Recall from \S6.3 of \cite{MUM} that a vector-valued function $H=(H_r)$ is called a mock modular form of {\em optimal growth} for $\G_0(n)$ with multiplier $\nu$, weight $\frac12$ and shadow $S$ if
\begin{gather}\label{eqn:conj:moon:opt}
	\begin{split}
	&\text{(i)}\quad H|_{\nu,\frac12,S}\g=H\text{ for all $\g\in \G_0(n)$},\\
	&\text{(ii)}\quad q^{\frac{1}{4m}}H_r(\t)=O(1)\text{ as $\t\to i\infty$ for all $r$,}\\
	&\text{(iii)}\quad H_r(\t)=O(1)\text{ for all $r$ as $\t\to \alpha\in \QQ$, whenever $\infty\notin \G_0(n)\alpha$}.
	\end{split}
\end{gather}
In condition (i) of (\ref{eqn:conj:moon:opt}) we write $|_{\nu,\frac12,S}$ for the weight $\frac12$ action of $\G_0(n)$ with multiplier $\nu$ and {\em twist} by $S$, on holomorphic vector-valued functions on the upper-half plane,
\begin{gather}
	\left(H|_{\nu,\frac12,S}\g\right)(\t)
	:=
	H(\g\t)\nu(\g)
	(c\t+d)^{-\frac12}
	+e(-\tfrac{1}{8})
	\int_{-\g^{-1}\infty}^{\infty}(z+\t)^{-\frac12}\overline{S(-\bar{z})}{\rm d}z
\end{gather}
for $\g=\left(\begin{smallmatrix}a&b\\c&d\end{smallmatrix}\right)$. Note that $e(x):=e^{2\pi i x}$. Roughly, the content of (i) of (\ref{eqn:conj:moon:opt}) is that $H$ is a mock modular form for $\Gamma_0(n)$. The other two conditions strongly restrict the growth of the components of $H$ near cusps.

Conjecture 6.5 of \cite{MUM} predicts that 
$H^X_g$ is the unique, up to scale, mock modular form of weight $\frac12$ for $\G_0(n)$ with optimal growth, for suitably chosen $n$, multiplier system and shadow. This conjecture is known to be true in many cases. For example, it holds for all $g\in G^X$, for $X=A_1^{24}$, as a consequence of the results of \cite{Cheng2011}. The validity of the conjecture for $g=e$, for all Niemeier root systems $X$, is proved in \cite{MUM}. (See Corollary 4.2, Proposition 4.6 and Proposition 5.2 in \cite{MUM}.)

But we now appreciate that the conjecture is false in general, 
due to the existence of non-zero holomorphic Jacobi forms of weight one. To be precise, for positive integers $m$ and $N$ say that a holomorphic function $\xi:\HH\times\CC\to \CC$ is a {\em (holomorphic) Jacobi form} of weight $1$, index $m$ and level $N$ if it is invariant for the usual weight $1$, index $m$ action of $\G^J_0(N):= \G_0(N)\ltimes\ZZ^2$, 
\begin{gather}
	\begin{split}\label{eqn:jac-jacact}
	(\xi|_{1,m}(\left(\begin{smallmatrix}a&b\\c&d\end{smallmatrix}\right),(0,0)))(\tau,z)&:=
	\xi\left(\frac{a\tau+b}{c\tau+d},\frac{z}{c\tau+d}\right)e\left(-m\frac{cz^2}{c\tau+d}\right)\frac{1}{c\tau+d},\\
	(\xi|_{1,m}(\left(\begin{smallmatrix}1&0\\0&1\end{smallmatrix}\right),(\lambda,\mu)))(\tau,z)&:=
	\xi\left(\tau,z+\lambda\tau+\mu\right)e\left(m\lambda^2\tau+2m\lambda z\right),
	\end{split}
\end{gather}
and if every {\em theta-coefficient} $h_r$ in the {\em theta-decomposition}
\begin{gather}\label{eqn:jac-thtdec}
	\xi(\t,z)=\sum_{r\xmod 2m}h_r(\t)\theta_{m,r}(\tau,z)
\end{gather}
remains bounded near every cusp of $\G_0(N)$. Say that $\xi$ is a Jacobi {\em cusp form} if the $h_r$ all tend to $0$, near all cusps of $\G_0(N)$.
In 
(\ref{eqn:jac-thtdec}) we write 
$\theta_{m,r}$ for the theta functions 
\begin{gather}\label{eqn:jac-thmr}
\theta_{m,r}(\tau,z):=\sum_{k\in\ZZ}q^{\frac1{4m}(2km+r)^2}y^{2km+r}
\end{gather}
attached to the even lattice $\sqrt{2m}\ZZ$, where $q=e(\tau)$ and $y=e(z)$. 
For later use we define 
\begin{gather}\label{eqn:jac-thmrpm}
\theta_{m,r}^\pm(\tau,z):=\theta_{m,-r}(\tau,z)\pm\th_{m,r}(\tau,z),
\end{gather}
and note that $\th_{m,r}(h\tau,hz)=\th_{mh,rh}(\tau,z)$ for $h$ a positive integer.

Write $J_{1,m}(N)$ for the space of holomorphic Jacobi forms of weight $1$, index $m$ and level $N$. 
Skoruppa has proved \cite{Sko_Thesis} that $J_{1,m}(1)=\{0\}$ for all $m$. More recently, an argument of Schmidt \cite{MR2545599}, demonstrates that 
there are no holomorphic Jacobi forms of weight $1$, for any index and level. Thus it was a surprise to us when we discovered\footnote{We are grateful to Ken Ono for pointing out to us that non-zero holomorphic weight one Jacobi forms should exist.} some non-zero examples. 

\begin{prop}\label{prop:jac:holexist}
Non-zero holomorphic weight one Jacobi forms exist. 
\end{prop}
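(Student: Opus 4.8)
The plan is to prove existence by exhibiting a single explicit example; no abstract argument seems to be needed once one has located, via the theta-decomposition, the right index and level. Concretely, I would look for $\xi$ in the form $\xi(\tau,z)=\sum_{r\bmod 2m}h_r(\tau)\theta_{m,r}(\tau,z)$ and demand that the theta-coefficients $h_r$ be holomorphic modular forms of weight $\tfrac12$ for $\G_0(N)$, transforming in the representation dual (up to an appropriate twist) to the one carried by $(\theta_{m,r})_{r\bmod 2m}$, so that the assembled $\xi$ is invariant---not merely covariant---under the weight-one, index-$m$ action \eqref{eqn:jac-jacact} of $\G^J_0(N)$. Since each $h_r$ has weight $\tfrac12$ and is holomorphic at the cusps, a Serre--Stark-type theorem forces it to be a finite linear combination of unary theta series $\theta_{\mathfrak m,s}(\tau,0)$; hence $\xi$ will be a finite linear combination of products $\theta_{\mathfrak m,s}(\tau,0)\,\theta_{m,r}(\tau,z)$, i.e.\ of binary lattice theta series carrying a linear character in the elliptic variable. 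The problem is thereby reduced to a finite linear-algebra question: find $m$, $N$, $\mathfrak m$ and coefficients for which such a combination is $\G_0(N)$-modular with trivial multiplier.

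Given a candidate $\xi$ I would then verify the three defining requirements in turn. First, nonvanishing: read the leading Fourier coefficients off the $q,y$-expansion and exhibit one that is nonzero. Second, the transformation law: invariance under the lattice part $\ZZ^2$ and under $\tau\mapsto\tau+1$ is immediate from the shape of the $q,y$-expansion---indeed the bookkeeping of eigenvalues under $\tau\mapsto\tau+1$ is exactly the constraint that dictates the admissible $\mathfrak m$ and $s$---so the real content is invariance under one further generator of $\G_0(N)$, say an element with lower-left entry $N$. For a sum of positive-definite lattice theta series this follows from Poisson summation, equivalently from the classical transformation formula for Jacobi theta functions; it is precisely at this point that the several terms of the combination must be balanced so that their individual multipliers cancel. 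Third, holomorphy: the theta-coefficients of a positive-definite lattice theta series have $q$-expansions supported on non-negative exponents at every cusp, so the boundedness condition built into $J_{1,m}(N)$ holds automatically once one checks the lattice in play is positive definite, which it is by construction.

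The main obstacle is the first step: guessing the correct pair $(m,N)$ and the correct linear combination so that the multiplier cancellation actually occurs. A single product $\theta_{\mathfrak m,s}(\tau,0)\,\theta_{m,r}(\tau,z)$ is the theta series of an even positive-definite rank-two lattice, and hence carries the quadratic character attached to the discriminant of that lattice, which is never trivial on any $\G_0(N)$; so no single product can work, and one is obliged to find a genuine combination in which these characters cancel against one another. This is exactly the kind of possibility that the non-existence arguments of Skoruppa \cite{Sko_Thesis} and Schmidt \cite{MR2545599} would appear to rule out, so part of the work is to understand why those arguments leave room at the levels in question---presumably non-squarefree levels, where the interaction between the $\G_0(N)$-structure and the Weil-representation multiplier is more flexible. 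In practice I expect the cleanest route is to read the example directly off the umbral moonshine data: take for $m$ and $N$ the index and level occurring in a case of Conjecture 6.5 of \cite{MUM} where the optimal-growth mock Jacobi form fails to be unique, the ambiguity being the addition of just such a $\xi$, after which the verification outlined above is routine.
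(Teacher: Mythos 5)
Your ansatz has the right shape---the paper's examples are indeed finite linear combinations of products $\theta_{\mathfrak m,s}(\tau,0)\,\theta_{m,r}(\tau,z)$, e.g.\ $\xi_{1,8}(\tau,z)=\theta_{8,4}(\tau,0)\th^-_{8,4}(\tau,z)$ and $\xi^{(9)}_{3A}=\theta_{3,3}(\tau,0)\th^-_{9,3}-\theta_{3,0}(\tau,0)\th^-_{9,6}$---but your proposal stops exactly where the proof has to begin. An existence statement is proved by exhibiting a witness, and you never name an index, a level, or a set of coefficients, let alone verify the multiplier cancellation that you yourself identify as ``the main obstacle.'' Reducing the problem to ``a finite linear-algebra question'' is not a proof until that linear algebra is carried out for some specific $(m,N)$ and shown to admit a non-zero solution; a priori (and in view of Skoruppa's vanishing for level $1$ and the Ibukiyama--Skoruppa vanishing for square-free level coprime to the index) the solution space could be zero for every pair one tries. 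Moreover, the fallback you offer is circular: you propose to read the example off a case of Conjecture 6.5 of \cite{MUM} where the optimal-growth form fails to be unique, but that failure of uniqueness is known only \emph{because} non-zero holomorphic weight-one Jacobi forms exist---it is a consequence of the proposition, not an independent input.

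The paper sidesteps both difficulties by importing an explicit form from the literature and rescaling it: $\eta(\tau)\th^-_{2,1}(\tau,\tfrac12 z)$ is a weight one, index $\tfrac12$, level $1$ Jacobi form with an explicitly known non-trivial multiplier system (Gritsenko--Nikulin), and since the multiplier transforms in a controlled way under $(\tau,z)\mapsto(h\tau,hz)$ and $(\tau,z)\mapsto(\tau,hz)$, the rescaling $\xi_{1,12}(\tau,z)=\eta(6\tau)\th^-_{2,1}(6\tau,6z)=(y^{-6}-y^6)q+O(q^7)$ lands in $J_{1,12}(36)$ with trivial multiplier and a visibly non-zero leading coefficient. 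To complete your argument you would need either to carry out your construction and verification explicitly for a concrete $(m,N)$ (the theta quarks $Q_{a,b}$, suitably rescaled, are the natural outputs of your search), or to cite such a form directly as the paper does.
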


Indeed, examples may be extracted from the existing literature. 
Recall the Dedekind eta function, $\eta(\t):=q^{\frac1{24}}\prod_{n>0}(1-q^n)$. 
The function $\eta(\t)\th^-_{2,1}(\t,\frac12z)$ 
appears in \S1.6, Example 1.14, of \cite{GriNik_AutFrmLorKMAlgs_II}. (Note that $\th_{2,1}^-(\tau,\frac12z)$ is $-\vartheta(\tau,z)$ in loc. cit.) 
It is a Jacobi form of weight $1$, index $\frac12$ and level $1$, with a non-trivial multiplier system. 
Observe that if $h$ is a positive integer and $\xi(\tau,z)$ is a Jacobi form of index $m$ and level $N$ with some multiplier system, 
then $\xi(\tau,hz)$ is a Jacobi form of index $mh^2$ with the same level, and $\xi(h\tau,hz)$ is a Jacobi form of index $mh$ and level $Nh$, and in both cases the weight is unchanged and the multiplier system transforms in a controlled way. 
From the explicit description of the multiplier system of $\eta(\tau)\th_{2,1}^-(\tau,\frac12 z)$ in \cite{GriNik_AutFrmLorKMAlgs_II}, 
we deduce that 
\begin{gather}
\xi_{1,12}(\t,z):=
\eta(6\t)\th^-_{2,1}(6\t,6z)=
(y^{-6}-y^6)q+O(q^7)
\end{gather} 
is a non-zero element of $J_{1,12}(36)$. 
This proves Proposition \ref{prop:jac:holexist}. Indeed, it proves the a priori stronger statement, that weight one cusp forms exist, because it is easily checked that $\eta(6\t)$ vanishes at all cusps of $\Gamma_0(36)$. In a similar way we obtain a non-zero, non-cuspidal 
element 
\begin{gather}\label{eqn:intro-xi18}
\xi_{1,8}(\t,z):=\th_{8,4}(\t,0)\th^-_{8,4}(\t,z)=(y^{-4}-y^4)q+O(q^5)
\end{gather}
of $J_{1,8}(32)$, by considering the function 
${\eta(2\t)^2}{\eta(\t)^{-1}}\th^-_{2,1}(\t,\frac12z)$ which appears in Theorem 1.4 of \cite{MR2806099}. 
(Note that $\th_{8,4}(\t,0)=\eta(8\t)^2\eta(4\t)^{-1}$.)

An infinite family of examples 
may be extracted from 
Corollary 4.9 of 
\cite{MR3123592}. For $a,b\in\ZZ^+$ define the {\em theta quark}\footnote{Theta quarks belong to a more general theory of {\em theta blocks} due to Gritsenko--Skoruppa--Zagier \cite{GSZ_ThtBlk}.}
\begin{gather}\label{eqn:thqk}
Q_{a,b}(\tau,z):= \eta(\tau)^{-1}{\th_{2,1}^-(\tau,\tfrac 12az)\th_{2,1}^-(\tau,\tfrac 12bz)\th_{2,1}^-(\tau,\tfrac 12(a+b)z)}.
\end{gather}
Then $Q_{a,b}$ (denoted $\th_{a,b}$ in \cite{MR3123592}) is a Jacobi form of weight $1$, index $m=a^2+ab+b^2$ and level $N=1$, with multiplier system satisfying 
\begin{gather}\label{eqn:Qabxform}
Q_{a,b}|_{1,m}(\left(\begin{smallmatrix}1&1\\0&1\end{smallmatrix}\right),(\lambda,\mu))=e(\tfrac13)
Q_{a,b},\quad
Q_{a,b}|_{1,m}(\left(\begin{smallmatrix}0&-1\\1&0\end{smallmatrix}\right),(\lambda,\mu))=Q_{a,b},
\end{gather} 
for $\lambda,\mu\in\ZZ$. From (\ref{eqn:Qabxform}) we deduce that $Q_{a,b}(3\tau,3z)$ is a non-zero element of $J_{1,3m}(9)$ for $m=a^2+ab+b^2$, for any $a,b\in \ZZ^+$. The first of these theta-quarks, $Q_{1,1}$, will play an important role in the sequel. 
To set up for this we define 
\begin{gather}
	\begin{split}
	\label{eqn:jac:f9s}
	\xi^{(9)}_{3A}(\tau,z)&:=\theta_{3,3}(\tau,0)\th^-_{9,3}(\tau,z)-\theta_{3,0}(\tau,0)\th^-_{9,6}(\tau,z),\\
	\xi^{(9)}_{6A}(\tau,z)&:=\theta_{3,3}(\tau,0)\th^-_{9,3}(\tau,z)+\theta_{3,0}(\tau,0)\th^-_{9,6}(\tau,z).
	\end{split}
\end{gather}
The first of these is $Q_{1,1}(3\tau,3z)$ and belongs to $J_{1,9}(9)$, the second belongs to $J_{1,9}(36)$. 

Despite contradicting Proposition \ref{prop:jac:holexist} it seems that the argument of \cite{MR2545599} is correct, except that it depends upon a result from \cite{MR2379341} which is stated more strongly than what is proven in that paper. It is apparently this overstatement that led to the false conclusion of \cite{MR2545599}, that $J_{1,m}(N)$ vanishes for all $m$ and $N$. In an erratum \cite{IbuSko_Cor} to \cite{MR2379341}, it is proven that $J_{1,m}(N)$ does vanish whenever $m$ and $N$ are coprime and $N$ is square-free. In fact, the following more general result is obtained.
\begin{thm}[\!\!\cite{IbuSko_Cor}]\label{thm:ibuskocor}
Let $m$ and $N$ be positive integers and assume that $N$ is square-free. Let $N_m$ be the smallest divisor of $N$ such that $m$ and $\frac{N}{N_m}$ are relatively prime. If $\frac{N}{N_m}$ is odd then $J_{1,m}(N)=J_{1,m}(N_m)$. If $\frac{N}{N_m}$ is even then $J_{1,m}(N)=J_{1,m}(2N_m)$. Also, if $m$ is odd then $J_{1,m}(2)=\{0\}$.
\end{thm}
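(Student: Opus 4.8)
The natural approach is to translate everything into the language of vector-valued modular forms of weight $\frac12$ and then invoke the Serre--Stark basis theorem. First I would use the theta-decomposition (\ref{eqn:jac-thtdec}): given $\xi\in J_{1,m}(N)$, the vector $h=(h_r)_{r\bmod 2m}$ of theta-coefficients transforms, by virtue of (\ref{eqn:jac-jacact}) together with the standard transformation law of the $\theta_{m,r}$ under $\SL_2(\ZZ)$, as a modular form of weight $\frac12$ on $\G_0(N)$ valued in a Weil representation $\rho_m$ attached to the discriminant form of $\sqrt{2m}\,\ZZ$; the requirement that every $h_r$ be bounded at the cusps of $\G_0(N)$ is precisely the statement that $h$ is holomorphic rather than merely weakly holomorphic, and conversely each such $h$ gives back an element of $J_{1,m}(N)$. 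This identifies $J_{1,m}(N)$ with the space $M_{\frac12}(\G_0(N),\rho_m)$ of holomorphic weight $\frac12$ vector-valued modular forms, and reduces the theorem to a statement about how that space depends on $N$. Since $N_m$ is the product of the primes dividing $\gcd(m,N)$ we have $N_m\mid N$, and $2N_m\mid N$ in the even case, so the content is that the inclusion $M_{\frac12}(\G_0(N_m),\rho_m)\subseteq M_{\frac12}(\G_0(N),\rho_m)$, resp. the analogous one with $2N_m$, is in fact an equality.

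Next I would apply a suitable form of the Serre--Stark theorem---in vector-valued form for $\rho_m$, or classically after restricting to a finite-index subgroup---which says that any holomorphic modular form of weight $\frac12$ is a linear combination of unary theta series $\theta_{\psi,t}(\tau)=\sum_{n\in\ZZ}\psi(n)q^{tn^2}$, with $\psi$ an even primitive Dirichlet character and $t$ a positive integer, where $\theta_{\psi,t}$ has level $4t\cdot\mathrm{cond}(\psi)^2$ and an explicit multiplier. Unwinding the $\G^J_0(N)$-invariance and the index-$m$ elliptic transformation of $\xi$, each component $h_r$ is then a combination of such theta series whose parameters $(\psi,t)$ are severely constrained: $t$ is governed by the quadratic form $\tfrac1{4m}(2km+r)^2$ underlying $\theta_{m,r}$, and the level $4t\,\mathrm{cond}(\psi)^2$ must divide a fixed multiple of $Nm$ depending only on $m$. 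The argument becomes purely arithmetic here. Because $N$ is square-free, a prime $p\mid N$ with $p\nmid m$ can enter the level of the $h_r$ only through $\G_0(N)$ itself---it can never be forced by $t$ or by $\mathrm{cond}(\psi)$---so such a $p$ is spurious: the true level of every theta constituent divides $N/p$, hence $h\in M_{\frac12}(\G_0(N/p),\rho_m)$ and $\xi\in J_{1,m}(N/p)$, provided $p$ is odd. Iterating over all odd primes dividing $N/N_m$ yields $J_{1,m}(N)=J_{1,m}(N_m)$ when $N/N_m$ is odd, and in general strips $N/N_m$ down to its $2$-part, giving $J_{1,m}(N)=J_{1,m}(2N_m)$.

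The prime $2$ is where the argument is genuinely delicate, and I expect it to be the main obstacle: both the Weil representation $\rho_m$ and the half-integral-weight theta multiplier behave exceptionally at $2$ (the $2$-adic component of the discriminant form of $\sqrt{2m}\,\ZZ$ is never trivial, and $\G_0(2)$ carries extra Atkin--Lehner symmetry), so a factor of $2$ in the level cannot in general be removed---this is exactly why the odd and even cases of the theorem must be stated separately. Finally, the last assertion $J_{1,m}(2)=\{0\}$ for odd $m$ should drop out of the same identification by a direct check: when $m$ is odd, $M_{\frac12}(\G_0(2),\rho_m)$ admits no admissible unary theta series at all, so it vanishes; the analogous computation at $N=1$ recovers Skoruppa's $J_{1,m}(1)=\{0\}$, which is the base case that makes the whole descent worthwhile.
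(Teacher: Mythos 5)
The first thing to say is that the paper contains no proof of this statement: Theorem \ref{thm:ibuskocor} is quoted verbatim from the Ibukiyama--Skoruppa erratum \cite{IbuSko_Cor}, and the authors state explicitly in the acknowledgements that they did not use (or reprove) this result in the present article. So there is no internal argument to compare yours against; I can only assess your sketch on its own merits, and against the related machinery the paper \emph{does} develop (Proposition \ref{prop:wt1-exps}, Lemma \ref{lem:wt1-exps}, Proposition \ref{prop:wt1-precise}).

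Your opening move --- theta-decompose $\xi$ and identify $J_{1,m}(N)$ with weight $\frac12$ vector-valued forms for $\Gamma_0(N)$ twisted by the Weil representation of $D_m$, then control the components by Serre--Stark --- is exactly the right family of ideas (it is Satz 5.2 of \cite{Sko_Thesis}, used in Proposition \ref{prop:wt1-exps}). But the descent step has genuine gaps. First, your claim that an odd prime $p\mid N$ with $p\nmid m$ ``can never be forced by $t$ or by $\mathrm{cond}(\psi)$'' is not justified and is false as stated: square-freeness of $N$ only rules out $p\mid\mathrm{cond}(\psi)$ and $p^2\mid t$; theta constituents $\theta_{\psi,t}$ with $p\,\|\,t$ are perfectly admissible for the level $\lcm(4m,N)$, and excluding them requires an additional arithmetic argument matching the fractional parts of exponents, precisely as in Lemma \ref{lem:wt1-exps} --- an argument the paper itself warns is not strong enough in general. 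Second, even granting that every component $h_r$ is a combination of theta series of level dividing $\lcm(4m,N_m)$, this does not yield $J_{1,m}(N)=J_{1,m}(N_m)$: the theorem is an equality of spaces, so you must show the \emph{vector} $(h_r)$ transforms correctly under the larger group $\Gamma_0(N_m)\ltimes\ZZ^2$, which does not follow from each component individually having small level. Third, the concluding vanishing claims ($J_{1,m}(2)=\{0\}$ for $m$ odd, and the base case $J_{1,m}(1)=\{0\}$) are asserted rather than proven; there are plenty of admissible unary theta series at these levels, and the true mechanism is a parity obstruction --- the relation $h_{-r}=-h_r$ forces $(h_r)$ into the odd part $\Th_m^-$, while $M_{\frac12}(\Gamma(4M))$ is built entirely from the even-character pieces $\Th_{M'}^{\new,\alpha'}|U_0$ with $\alpha'(-1)=1$ --- which your sketch never engages with. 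That parity argument is the heart of why weight one is special, and without it the proposal does not close.
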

The fact that $J_{1,m}(N)=\{0\}$ when $N$ is square-free and $(m,N)=1$ is obtained by taking Theorem \ref{thm:ibuskocor} together with Skoruppa's earlier result \cite{Sko_Thesis} that $J_{1,m}(1)=\{0\}$ for all $m$.

In this article we revisit the conjectures of \cite{MUM}, in light of the existence of holomorphic weight one Jacobi forms, and we establish the vanishing of $J_{1,m}(N)$ for many $m$ and $N$. 
As we explain in \S\ref{sec:conj:mod}, the particular Jacobi forms (\ref{eqn:intro-xi18}) and (\ref{eqn:jac:f9s}) play special roles in moonshine, the latter leading us to a simplification (Conjecture \ref{conj:conj:mod:Kell}) of the umbral moonshine module conjecture. It is this simplified form of the umbral moonshine module conjecture which is described in the recent review \cite{mnstmlts}, and proven in \cite{umrec}. 

In \S\ref{sec:conj:rad} we formulate a characterization (Conjecture \ref{conj:conj:moon:rad}) of the umbral McKay--Thompson series in terms of Rademacher sums. This serves as a natural analogue of the genus zero property of monstrous moonshine, and a replacement for Conjecture 6.5 of \cite{MUM} which is now known to be false in general. We 
point out a geometric interpretation of the theta quark $Q_{1,1}$ 
in \S\ref{sec:conj:pas}, and also indicate a possible connection to physics.

The remainder of the paper is devoted to a proof of the Rademacher sum characterization, Conjecture \ref{conj:conj:moon:rad}. This follows (Corollary \ref{cor:wt1-rad}) from our main result, Theorem \ref{thm:wt1-main}, which states that a certain family of holomorphic Jacobi forms of weight one vanishes identically. The proof of Theorem \ref{thm:wt1-main} is given in \S\ref{sec:wt1:pmr}. We obtain it by applying a representation theoretic approach that has been developed by Skoruppa (cf. \cite{MR2512363}). We review relevant background in \S\ref{sec:wt1:weil} and \S\ref{sec:wt1:ppp}. The methods of \S\ref{sec:wt1:pmr} work best for levels that are not divisible by high powers of $2$ or $3$. In \S\ref{sec:wt1:exp} we use a more elementary technique to prove (mostly) complementary vanishing results for some spaces with small index.

We attach four appendices to this article. In \S\ref{sec:coeffs} we present revised coefficient tables for the umbral McKay--Thompson series $H^X_g$ with $X=A_8^3$. Using these together with the character table of $G^X$ in \S\ref{sec:chars} one can compute the (revised) multiplicities of irreducible $G^X$-modules that appear in \S\ref{sec:decompositions}. The proof of our main result (Theorem \ref{thm:wt1-main}) is obtained by applying the lemmas we prove in \S\ref{sec:wt1:pmr} to the tables in \S\ref{sec:levels}.

\section{Umbral Moonshine}\label{sec:conj}

\subsection{Umbral Moonshine Modules}\label{sec:conj:mod}

Conjecture 6.1 of \cite{MUM} predicts the existence of umbral analogues $K^X$ of $V^{\natural}$, for each Niemeier root system $X$. At the time this conjecture was formulated we had identified candidate functions $H^{X}_{g,r}$, for $g\in G^X$, which implied that $K^X$ would have peculiar properties in the case that $X=A_8^3$. Nonetheless, we did not think that there could be other possibilities for the $H^X_{g,r}$ with $X=A_8^3$, in light of Schmidt's result \cite{MR2545599} discussed above. 

It turns out that $\xi^{(9)}_{g}(\tau,z)$ (cf. (\ref{eqn:jac:f9s})) and $\phi^X_{g}(\tau,z):=\sum_{r\xmod 18}H^X_{g,r}(\t)\th_{9,r}(\tau,z)$ have the same multiplier system, for $X=A_8^3$ and $o(g)=0\xmod 3$, on $\Gamma_0(3)$ for $g\in 3A$, and on $\Gamma_0(6)$ in the case that $g\in 6A$. Thus we may use these holomorphic Jacobi forms to revise our prescription for $H^X_{g}$, for $X=A_8^3$, for the two conjugacy classes $[g]\subset G^X$ with $o(g)=0\xmod 3$. 

Doing this we are led to a reformulation of Conjecture 6.1 of \cite{MUM} that is uniform with respect to the choice of Niemeier root system $X$. To be precise, let us redefine $H^X_{3A}$ and $H^X_{6A}$, for $X=A_8^3$, by setting 
\begin{gather}
H^X_{g}(\t):=H^{X,{\rm old}}_g(\t)+t^{(9)}_{g}(\t), 
\end{gather}
where $t^{(9)}_g=(t^{(9)}_{g,r})$ is the vector-valued theta series of weight $\frac12$, defined by
\begin{gather}\label{eqn:jac:t9s}
	t^{(9)}_{3A,r}(\t):=
	\begin{cases}
		0,&\text{ if $r=0\xmod 9$, or $r\neq 0\xmod 3$,}\\
		\mp\theta_{3,3}(\tau,0),&\text{ if $r\pm 3\xmod 18$,}\\
		\pm\theta_{3,0}(\tau,0),&\text{ if $r\pm 6\xmod 18$,}\\
	\end{cases}\\
	t^{(9)}_{6A,r}(\t):=
	\begin{cases}
		0,&\text{ if $r=0\xmod 9$, or $r\neq 0\xmod 3$,}\\
		\mp\theta_{3,3}(\tau,0),&\text{ if $r\pm 3\xmod 18$,}\\
		\mp\theta_{3,0}(\tau,0),&\text{ if $r\pm 6\xmod 18$.}\\
	\end{cases}
\end{gather}
Note that $t^{(9)}_g$ is just the vector-valued modular form whose components are the theta coefficients of $\xi^{(9)}_g$ (cf. (\ref{eqn:jac:f9s})).

We present the corresponding coefficient tables (revisions of Tables 75--82 in \cite{MUM}) in \S\ref{sec:coeffs}. This leads to revised decomposition tables (revisions of Tables 170 and 171 in \cite{MUM}), which we present in \S\ref{sec:decompositions}. Motivated by these new decompositions we reformulate Conjecture 6.1 of \cite{MUM} as follows.

\begin{conj}\label{conj:conj:mod:Kell}
Let $\rs$ be a Niemeier root system and let ${ m}$ be the Coxeter number of $\rs$. There exists a naturally defined $\ZZ/2m\ZZ\times\QQ$-graded super-module
\begin{gather}
	K^{\rs}=\bigoplus_{r\text{ mod }2m} K^{\rs}_r=\bigoplus_{r\text{ mod }2m}
	\bigoplus_{\substack{D\in\ZZ\\D=r^2\text{ mod }4m}}
	K^{\rs}_{r,-\frac{D}{4m}}
\end{gather}
for $G^X$, such that 
the graded super-trace attached to an element $g\in G^{\rs}$ is recovered from the vector-valued mock modular form $H^{\rs}_g$ via  
\begin{gather}\label{eqn:conj:mod:str}
	H^{\rs}_{g,r}(\tau)=
	\sum_{\substack{D\in\ZZ\\D=r^2\text{ mod }4m}}\str_{K^{\rs}_{r,-\frac{D}{4m}}}(g)\,q^{-\frac{D}{4m}}.
\end{gather}
Moreover, $K^X_r=\{0\}$ for $r=m\xmod 2m$. If $0<r<m$, then the homogeneous component $K^{\rs}_{r,d}$ of $K^{\rs}$ is purely even for $d\geq 0$, and purely odd for $d<0$. If $-m<r<0$, then the homogeneous component $K^{\rs}_{r,d}$ is purely odd for $d\geq 0$, and purely even for $d<0$. 
\end{conj}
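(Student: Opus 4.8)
The plan is to prove the conjecture along the lines of Gannon's proof of the existence of the moonshine module for the monster: rather than building a module directly, one writes down each graded piece $K^{\rs}_{r,d}$ through its proposed character, checks that this virtual character is the character of an honest $G^{\rs}$-representation, and then uses the prescribed $\ZZ/2$-grading to assemble the pieces into a super-module whose graded super-trace on $g$ is $H^{\rs}_g$. Constructing an actual, let alone ``natural'', module is not needed for this: existence of the virtual character, together with effectivity and the sign pattern, is the whole content of the statement.

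First I would record the candidate characters. Let $m$ be the Coxeter number of $\rs$; for $g\in G^{\rs}$ and $(r,d)$ with $d=-D/4m$, $D\in\ZZ$, $D\equiv r^2\bmod 4m$, let $\hat c_{g,r,d}$ be the coefficient of $q^{d}$ in $H^{\rs}_{g,r}$, where for $\rs=A_8^3$ and $o(g)\equiv 0\bmod 3$ one uses the revised series $H^X_g=H^{X,{\rm old}}_g+t^{(9)}_g$ (legitimate because $t^{(9)}_g$ is the theta-decomposition of the genuine Jacobi form $\xi^{(9)}_g$). One first checks that $g\mapsto\hat c_{g,r,d}$ is a class function on $G^{\rs}$, which is part of the input from \cite{MUM} and its revision above. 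Setting $m_\chi(r,d):=\langle\chi,\hat c_{\cdot,r,d}\rangle=\frac{1}{|G^{\rs}|}\sum_g\overline{\chi(g)}\,\hat c_{g,r,d}$ for each irreducible character $\chi$, the conjecture becomes: $m_\chi(r,d)\in\ZZ$ always; $m_\chi(r,d)=0$ whenever $r\equiv m\bmod 2m$ (this last being the statement that the $r$-th theta-coefficient of $\phi^X_g$ vanishes, again a feature of the definitions in \cite{MUM}); and $m_\chi(r,d)$ is nonnegative precisely when $K^{\rs}_{r,d}$ is declared even and nonpositive when it is declared odd. Integrality I would obtain as in the monstrous setting, from the facts that the $\hat c_{g,r,d}$ are algebraic integers and that the assignment $g\mapsto H^{\rs}_g$ is Galois-equivariant in the appropriate sense, so that averaging over Galois orbits and using the cyclotomic rationality of the character table forces $m_\chi(r,d)\in\ZZ$; concretely this is verified class-by-class from the character tables of \S\ref{sec:chars} and the revised coefficient tables of \S\ref{sec:coeffs}.

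The real work is the sign of $m_\chi(r,d)$. I would split the set of exponents into the finitely many ``polar and low-order'' ones --- all $d<0$ together with $d\geq 0$ below an effective bound --- which are checked directly against the appendix tables, and the infinite tail $d\to\infty$, for which I would use an exact Rademacher-sum expression for $\hat c_{g,r,d}$. This is the characterization discussed in \S\ref{sec:conj:rad} (Conjecture \ref{conj:conj:moon:rad}), whose proof the rest of this paper reduces, via Corollary \ref{cor:wt1-rad}, to the vanishing of a family of weight one Jacobi forms (Theorem \ref{thm:wt1-main}). In such an expression the dominant term comes from the identity coset and is, up to the factor $|C_{G^{\rs}}(g)|^{-1}$, independent of $g$, while the remaining terms are of strictly smaller exponential order; hence for $d$ large $|\hat c_{g,r,d}|$ is dominated by $|C_{G^{\rs}}(g)|^{-1}\hat c_{e,r,d}$, which forces $m_\chi(r,d)$ to have the sign of the identity contribution beyond an effective cutoff and thereby reduces positivity to the finite check.

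For the parity pattern itself, note that the negative-$d$ part of each $H^{\rs}_{g,r}$ is a \emph{finite} sum determined by the shadow of $H^{\rs}_g$, an explicit weight $3/2$ unary theta series; reading off its coefficients shows that the polar part of $K^{\rs}$ must carry the opposite parity to the genuinely $q$-positive non-polar part --- which is exactly why one is forced to a super-module rather than an ordinary module, and which produces the even/odd prescription in the statement. The hardest part will be the positivity tail: one needs \emph{effective} bounds on the Fourier coefficients of the mock modular forms $H^{\rs}_g$ that are uniform over $g\in G^{\rs}$ and over all Niemeier root systems $\rs$, and these rest on the Rademacher-sum description, hence on precisely the weight one Jacobi form vanishing results that occupy the remainder of this paper. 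Constructing $K^{\rs}$ as a canonical (say, vertex-algebraic) object --- the force of ``naturally'' in the statement --- is a strictly harder problem that this character-theoretic route does not address.
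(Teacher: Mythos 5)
The statement you are addressing is Conjecture \ref{conj:conj:mod:Kell}, and the paper contains no proof of it: it is formulated as a conjecture, with the existence of $K^X$ attributed to Gannon \cite{MR3539377} for $X=A_1^{24}$ and to \cite{umrec} for all Niemeier root systems. What the present paper actually proves is the Rademacher-sum characterization of the $\check{H}^X_g$ (Conjecture \ref{conj:conj:moon:rad}, via Corollary \ref{cor:wt1-rad} and Theorem \ref{thm:wt1-main}), which is an \emph{input} to the module construction, not a proof of it. So there is no internal argument to compare yours against; your outline should be measured against \cite{MR3539377} and \cite{umrec}, whose strategy (orthogonality to extract virtual multiplicities $m_\chi(r,d)$, integrality, then positivity from a finite check plus an asymptotic estimate) you have correctly identified at a high level.

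Two steps of your plan are nevertheless wrong or incomplete as stated. First, the asymptotic step: the inequality you invoke, that $|\hat c_{g,r,d}|$ is dominated by $|C_{G^X}(g)|^{-1}\hat c_{e,r,d}$, is not the right statement --- the coefficients attached to $g\neq e$ are not approximately proportional to those of the identity. What the circle-method analysis gives is that $\hat c_{g,r,d}$ for $g\neq e$ grows at a strictly smaller \emph{exponential rate} than $\hat c_{e,r,d}$ (because the level $n_g$ exceeds $1$), so that the term $\frac{\chi(e)}{|G^X|}\,\hat c_{e,r,d}$ dominates the class sum for large $d$; making this effective and uniform in $g$ and $X$ is the hard analytic content of \cite{umrec} and is nowhere supplied by your sketch. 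Second, the parity pattern is not ``determined by the shadow'': the entire polar part of $\check H^X_g$ is the single term $-2q^{-\frac{1}{4m}}$ in the $r=1$ component (cf.\ the discussion preceding (\ref{eqn:conj:mod:strtilde})), and the even/odd prescription in the statement must simultaneously account for this polar term \emph{and} for the antisymmetry $H^X_{g,-r}=-H^X_{g,r}$, which governs the sign reversal between $0<r<m$ and $-m<r<0$; your account addresses only the first of these. Finally, as you concede, the word ``naturally'' in the conjecture asks for more than a virtual-character computation, so even a completed version of your argument would establish existence but not naturality.
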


The existence of $K^X$ for $X=A_1^{24}$ was established by Gannon in \cite{MR3539377}. More recently, the existence of all the $G^X$-modules $K^X$ satisfying the specifications of Conjecture \ref{conj:conj:mod:Kell} has been established in \cite{umrec}.

The formulation of Conjecture \ref{conj:conj:mod:Kell} that appears in \cite{umrec} (cf. also \S9.3 of \cite{mnstmlts}) is slightly different from the above, avoiding the use of superspaces and supertraces. We now explain the equivalence.

Recall that $H^X_{g,r}=-H^{X}_{g,-r}$ for all $X$, $g\in G^X$ and $r\in\ZZ/2m\ZZ$, so 
to prove Conjecture \ref{conj:conj:mod:Kell} it suffices to construct the $K^{\rs}_{r}$ for $0<r<m$. If the highest rank irreducible component of $\rs$ is not of type A then there are further symmetries amongst the $H^X_{g,r}$ that further reduce the problem. Namely, if $X$ has a type D component but no type A components then $m=2\xmod 4$, we have $H^X_{g,r}=0$ for $r=0\xmod 2$, and $H^{X}_{g,r}=H^{X}_{g,m-r}$. So it suffices to consider the $K^{\rs}_r$ for $r\in\{1,3,5,\ldots,\frac12 m\}$. If $X$ has no components of type A or D then either $X=E_6^4$ or $X=E_8^3$. For $X=E_6^4$ it suffices to consider $r\in\{1,4,5\}$, and for $X=E_8^3$ we need only $r\in\{1,7\}$.

With this in mind let us define $I^X\subset\ZZ/2m\ZZ$ by setting $I^X:=\{1,2,3,\ldots,m-1\}$ in case $X$ has a type A component. Set $I^X:=\{1,3,5,\ldots,\frac12 m\}$ in case $X$ has a type D component but no type A components, and set $I^X:=\{1,4,5\}$ for $X=E_6^4$, and $I^X:=\{1,7\}$ for $X=E_8^3$.

Define $\check{H}^X_{g}(\t)$ to be the $I^X$-vector-valued function (i.e. vector-valued function with components indexed by $I^X$) whose components are the $H^X_{g,r}$ for $r\in I^X$. The $\check{H}^X_g$ inherit good modular properties from the $H^X_g$. For example, if $X$ has a simple component $A_{m-1}$ then the multiplier system of $\check{H}^X_e$ is the inverse of that associated to 
\begin{gather}\label{eqn:checkSAm-1}
	\check{S}^{A_{m-1}}(\t):=(S_{m,1}(\t),\ldots,S_{m,m-1}(\t)), 
\end{gather}
where $S_{m,r}(\tau):=\sum_{k\in\ZZ}(2km+r)q^{\frac1{4m}(2km+r)^2}$. For $X=E_8^3$ the multiplier system of $\check{H}^X_e$ is the inverse of that associated to 
\begin{gather}
	\check{S}^{E_{8}}(\t):=((S_{30,1}+S_{30,11}+S_{30,19}+S_{30,29})(\t),(S_{30,7}+S_{30,13}+S_{30,17}+S_{30,23})(\t)). 
\end{gather}
(Cf. \S4.1 of \cite{MUM}.) More generally, if the mock modular form $H^X_g$ transforms under $\Gamma_0(n_g)$ with multiplier system $\nu^X_g$ and shadow $S^X_g$ then there is a directly related multiplier system $\check{\nu}^X_g$ 
such that $\check{H}^X_g$ transforms under $\Gamma_0(n_g)$ with multiplier $\check{\nu}^X_g$ and shadow $\check{S}^X_g$, where $\check{S}^X_g$ is the $I^X$-vector-valued cusp form whose components are the $S^X_{g,r}$ for $r\in I^X$.

Note that only one component of $\check{H}^X_{g}$ has a pole. Namely, 
\begin{gather}
\check{H}^X_{g,1}(\tau)=H^X_{g,1}(\t)=-2q^{-\frac{1}{4m}}+O(q^{1-\frac{1}{4m}}). 
\end{gather}
In terms of the $\check{H}^X_g$, Conjecture \ref{conj:conj:mod:Kell} may be rephrased 
as the statement that there exist naturally defined bi-graded $G^X$-modules
\begin{gather}
	\check{K}^{\rs}=
	\bigoplus_{r\in I^X} \check{K}^{\rs}_r=\bigoplus_{r\in I^X}
	\bigoplus_{\substack{D\in\ZZ\\D=r^2\text{ mod }4m}}
	\check{K}^{\rs}_{r,-\frac{D}{4m}}
\end{gather}
such that 
the graded trace attached to an element $g\in G^{\rs}$ is recovered from the vector-valued mock modular form $\check{H}^{\rs}_g$ via  
\begin{gather}\label{eqn:conj:mod:strtilde}
	\check{H}^{\rs}_{g,r}(\tau)=-2q^{-\frac{1}{4m}}\delta_{r,1}+
	\sum_{\substack{D\in\ZZ\\D=r^2\text{ mod }4m}}\tr_{\check{K}^{\rs}_{r,-\frac{D}{4m}}}(g)\,q^{-\frac{D}{4m}}.
\end{gather}
This is the form in which Conjecture \ref{conj:conj:mod:Kell} has been expressed in \cite{umrec,mnstmlts}.

Conjecture 6.11 from \cite{MUM} also concerns umbral moonshine modules. Now we may simplify it by removing the last sentence, concerning $X=A_8^3$, as the newly defined $H^X_g$ lead (conjecturally) to representations $K^X_{r,-\frac{D}{4m}}$ that are doublets for $G^X$, whenever $D\neq 0$. This is what we would expect, given the discussion in \S6.4 of \cite{MUM}, since there are no values $n$ attached to $X=A_8^3$ in Table 10 of \cite{MUM}. 

To conclude this section we note that the example (\ref{eqn:intro-xi18}) also plays a special role in moonshine, 
because it develops \cite{MR3108775} that $\xi_{1,8}(\frac14 \tau,\frac14 z)=\th_{2,1}(\tau,0)\th_{2,1}^-(\t,z)$ is the only non-zero Jacobi form appearing in generalised Mathieu moonshine that is not related to one of the Mathieu moonshine forms by the action by $\SL_2(\ZZ)$. A few further holomorphic Jacobi forms appear in generalised umbral moonshine; they are described explicitly in  Table 2 of \cite{Cheng:2016nto}. Conjecturally \cite{Cheng:2016nto} there are infinite-dimensional modules for certain deformations of the Drinfel'd doubles of the umbral groups $G^X$ that underly the functions of generalised umbral moonshine.

\subsection{Umbral Mock Modular Forms}\label{sec:conj:rad}

We require to modify the statement of Conjecture 6.5 of \cite{MUM}, in light of the discussion in \S\ref{sec:intro}. Since the notion of optimal growth is too weak to determine the umbral McKay--Thompson series in general, we recast our reformulation in terms of Rademacher sums, thus generalizing Conjecture 5.4 of \cite{UM}. The $\check{H}^X_g$ are well-adapted to this, as they each have a pole in exactly one component.

Write $\Gamma_{\infty}$ for the group of upper-triangular matrices in $\SL_2(\ZZ)$. For $\alpha\in \RR$ and $\g\in \SL_2(\ZZ)$ 
define $\rad^{[\alpha]}_{\frac12}(\gamma,\tau):=1$ if $\gamma\in \G_\infty$. For $\gamma=\left(\begin{smallmatrix}a&b\\c&d\end{smallmatrix}\right)$ not in $\Gamma_\infty$ set
\begin{gather}
\rad^{[\alpha]}_{\frac12}(\left(\begin{smallmatrix}a&b\\c&d\end{smallmatrix}\right),\tau):=
	e\left(\tfrac{\a}{c(c\tau+d)}\right)
		\sum_{k\geq 0}
\frac{\left(-2\pi i\tfrac{ \alpha}{ c(c\tau+d)}\right)^{n+\frac12}}{\Gamma(n+\tfrac32)},
\end{gather}
where $e(x):=e^{2\pi i x}$ and we use the principal branch to define $z^\frac12$ for $z\in \CC$.
Suppose that $\nu$ is a multiplier system for vector-valued modular forms of weight $\frac12$ on $\Gamma=\Gamma_0(n)$, for some $n$, and suppose that $\nu=(\nu_{ij})$ satisfies $\nu_{11}(\left(\begin{smallmatrix}1&1\\0&1\end{smallmatrix}\right))=e(\frac14m)$, for some basis $\{\gt{e}_i\}$, for some positive integer $m$. 
To this data we attach the {\em Rademacher sum}
\begin{gather}\label{eqn:conj:Rad}
	R_{\Gamma,\nu}(\tau)
	:=
	\lim_{K\to \infty}
	\sum_{\left(\begin{smallmatrix}a&b\\c&d\end{smallmatrix}\right)\in\G_\infty\backslash\G_{K,K^2}}
	{\nu}(\left(\begin{smallmatrix}a&b\\c&d\end{smallmatrix}\right))e\left(-\tfrac{1}{4m}\tfrac{a\tau+b}{c\tau+d}\right)\gt{e}_1
	{(c\tau+d)^{-\frac12}}
	\rad^{[-\frac{1}{4m}]}_{\frac12}(\left(\begin{smallmatrix}a&b\\c&d\end{smallmatrix}\right),\tau)
	,
\end{gather}
where $\Gamma_{K,K^2}:=\left\{\left(\begin{smallmatrix}a&b\\c&d\end{smallmatrix}\right)\in\Gamma\mid 0\leq c<K,\,|d|<K^2\right\}$. 
See \cite{Cheng:2014fk} for an introduction to Rademacher sums, and \cite{2014arXiv1406.0571W} for a general and detailed discussion of the vector-valued case.

We now use the 
construction (\ref{eqn:conj:Rad}) to formulate a replacement for Conjecture 6.5 of \cite{MUM}. In preparation for the case that $X=A_8^3$ define $\check{t}^{(9)}_g$ to be the $8$-vector-valued theta series whose components are the $t^{(9)}_{g,r}$ for $0<r<9$ (cf. (\ref{eqn:jac:t9s})).
\begin{conj}\label{conj:conj:moon:rad}
Let $X$ be a Niemeier root system and let $g\in G^X$. If $X\neq A_8^3$ and $g\in G^X$, or if $X=A_8^3$ and $g\in G^X$ does not satisfy $o(g)=0\xmod 3$, then we have
\begin{gather}
	\check{H}^X_{g}(\t)=-2R^X_{\Gamma_0(n_g),\check{\nu}^X_g}(\tau).
\end{gather}
If $X=A_8^3$ and $g\in G^X$ satisfies $o(g)=0\xmod 3$ then
\begin{gather}
	\check{H}^X_{g,r}(\t)=-2R^X_{\Gamma_0(n_g),\check{\nu}^X_g}(\t)+\check{t}^{(9)}_{g}(\t).
\end{gather}
\end{conj}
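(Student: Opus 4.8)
The plan is to deduce the conjecture from the main theorem: I will show that in each case the difference between the two sides of the claimed identity assembles into a holomorphic Jacobi form of weight one belonging to one of the spaces $J_{1,m}(N)$ that Theorem \ref{thm:wt1-main} shows to be trivial, and hence that this difference vanishes.

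First I would record the analytic properties of the Rademacher sum $R^X_{\Gamma_0(n_g),\check{\nu}^X_g}$ of (\ref{eqn:conj:Rad}). By the general theory of vector-valued weight $\tfrac12$ Rademacher sums (see \cite{2014arXiv1406.0571W}, cf. \cite{Cheng:2014fk}), the $K,K^2$-regularized sum converges locally uniformly on $\HH$, transforms under $\Gamma_0(n_g)$ with multiplier $\check{\nu}^X_g$ up to a period-integral correction term, and is therefore a mock modular form of weight $\tfrac12$; its shadow is, up to the nonzero constant fixed by the normalization, the weight $\tfrac32$ unary theta series attached to the multiplier dual to $\check{\nu}^X_g$, and by the explicit description of $\check{\nu}^X_g$ recalled in \S\ref{sec:conj:mod} (cf. (\ref{eqn:checkSAm-1}) and \S4 of \cite{MUM}) this means precisely that $-2R^X_{\Gamma_0(n_g),\check{\nu}^X_g}$ has shadow $\check{S}^X_g$. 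Since the seed of the sum is $q^{-\frac1{4m}}\gt{e}_1$, the form $-2R^X_{\Gamma_0(n_g),\check{\nu}^X_g}$ has principal part $-2q^{-\frac1{4m}}\gt{e}_1$ at the infinite cusp (and the corresponding root-of-unity multiple at any cusp equivalent to $\infty$), and is $O(1)$ at every remaining cusp.

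Next I would combine this with the modular and growth properties of the umbral McKay--Thompson series recorded in \cite{MUM}: $\check{H}^X_g$ is a mock modular form of weight $\tfrac12$ for $\Gamma_0(n_g)$ with the same multiplier $\check{\nu}^X_g$, the same shadow $\check{S}^X_g$, and the same principal part at every cusp ($-2q^{-\frac1{4m}}\gt{e}_1$ at $\infty$ by the expansion recorded in \S\ref{sec:conj:mod}, and $O(1)$ elsewhere by optimal growth). In the exceptional case $X=A_8^3$ with $o(g)\equiv 0\bmod 3$ one also uses that $\check{t}^{(9)}_g$ is holomorphic and bounded at every cusp, being the vector of theta-coefficients of the holomorphic Jacobi form $\xi^{(9)}_g$ of (\ref{eqn:jac:f9s}), built out of the holomorphic theta functions $\theta_{3,3}(\tau,0)$, $\theta_{3,0}(\tau,0)$ as in (\ref{eqn:jac:t9s}). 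Hence the vector-valued function $\Delta^X_g$, defined to be
\begin{gather}
\check{H}^X_g+2R^X_{\Gamma_0(n_g),\check{\nu}^X_g}
\end{gather}
in the generic case and $\check{H}^X_g-\check{t}^{(9)}_g+2R^X_{\Gamma_0(n_g),\check{\nu}^X_g}$ when $X=A_8^3$ and $o(g)\equiv 0\bmod 3$, has vanishing shadow, hence is a genuine weakly holomorphic vector-valued modular form of weight $\tfrac12$ for $\Gamma_0(n_g)$ with multiplier $\check{\nu}^X_g$; its principal parts at all cusps cancel, so it is holomorphic at, and $O(1)$ near, every cusp. Using $H^X_{g,r}=-H^X_{g,-r}$, the vanishing $H^X_{g,m}=0$, the relation between $\check{\nu}^X_g$ and the full index-$m$ Jacobi multiplier, and (where relevant) the type-$D$ and type-$E$ reductions of \S\ref{sec:conj:mod} to recover all components $r\bmod 2m$ from those indexed by $I^X$, the theta series $\sum_r(\Delta^X_g)_r\,\theta_{m,r}(\tau,z)$, after the same (possible) rescaling $(\tau,z)\mapsto(h\tau,hz)$ that trivializes the multiplier in the examples of \S\ref{sec:conj:mod}, becomes a holomorphic Jacobi form of weight one, lying in one of the spaces $J_{1,m'}(N')$ in the family to which Theorem \ref{thm:wt1-main} applies.

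Finally I would check that every pair $(m',N')$ arising this way is covered: for the levels $n_g$ occurring for Niemeier root systems $X$ and $g\in G^X$, the resulting $N'$ is not divisible by large powers of $2$ or $3$, so the representation-theoretic vanishing argument of \S\ref{sec:wt1:pmr} (following Skoruppa's Weil-representation method, cf. \cite{MR2512363}, applying the lemmas there to the tables of \S\ref{sec:levels}) applies, and the remaining small-index spaces are dispatched by the elementary method of \S\ref{sec:wt1:exp}. With $J_{1,m'}(N')=\{0\}$ in every case we get $\Delta^X_g=0$, which is the assertion of the conjecture. I expect the principal obstacle to be Theorem \ref{thm:wt1-main} itself --- establishing that this whole family of weight-one Jacobi forms vanishes --- and, secondarily, the bookkeeping needed to verify that \emph{every} level $n_g$ from umbral moonshine lies within reach of \S\ref{sec:wt1:pmr} or \S\ref{sec:wt1:exp}, together with the delicate check, for $X=A_8^3$, that $\check{t}^{(9)}_g$ matches the shadow and polar part of $-2R^X_{\Gamma_0(n_g),\check{\nu}^X_g}$ precisely enough for $\Delta^X_g$ to be genuinely holomorphic.
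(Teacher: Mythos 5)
Your overall strategy---subtract the Rademacher sum from $\check H^X_g$, show the difference assembles into a holomorphic Jacobi form of weight one, and invoke Theorem \ref{thm:wt1-main}---is exactly the route the paper takes in the proof of Corollary \ref{cor:wt1-rad}. But there are two genuine gaps. The more serious one is the exceptional case $X=A_8^3$, $o(g)=0\xmod 3$: Theorem \ref{thm:wt1-main} \emph{explicitly excludes} these classes, and must, because $\xi^{(9)}_g$ is a non-zero holomorphic Jacobi form of weight $1$ and index $9$ with precisely the relevant level and multiplier. So even granting that your $\Delta^X_g=\check H^X_g-\check t^{(9)}_g+2R^X_{\Gamma_0(n_g),\check\nu^X_g}$ is a holomorphic Jacobi form, you cannot conclude it vanishes from Theorem \ref{thm:wt1-main}; the space it lives in is not known to be zero (and contains $\xi^{(9)}_g$). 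The paper closes this by a separate argument: the multiplier of $\check S^{A_8}$ becomes block diagonal on $\Gamma_0(3)$ with blocks $\{1,2,4,5,7,8\}$ and $\{3,6\}$, which forces the $r=3$ and $r=6$ components of the Rademacher sum to vanish identically, and from this the claimed identity for $3A$ and $6A$ follows. Your proposal contains no substitute for this step; the ``delicate check'' you flag at the end is not the missing ingredient.

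The second gap is your assertion that the shadow of $-2R^X_{\Gamma_0(n_g),\check\nu^X_g}$ is exactly $\check S^X_g$. The general theory gives that the shadow of a weight $\frac12$ Rademacher sum is a weight $\frac32$ Rademacher sum, hence a cusp form, but identifying it with the specific unary theta series requires either a dimension count for the relevant space of weight $\frac32$ cusp forms or a direct computation; you supply neither. The paper deliberately avoids this: it only uses that the difference $\xi=\phi^X_g+2\rho$ has theta-coefficients bounded at all cusps, so that its shadow is a cusp form which, by the Bruinier--Funke pairing argument (Proposition 3.5 of \cite{BruFun_TwoGmtThtLfts}), is orthogonal to all cusp forms and hence zero. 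That argument needs no identification of either shadow individually, and you should adopt it (or justify the shadow identification) to make your first step sound. Finally, note that the paper's result is conditional on convergence of the Rademacher sums, which is proved in \cite{umrec} rather than being an automatic consequence of the regularization.
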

According to the discussion of \S5.2 of \cite{UM}, Conjecture \ref{conj:conj:moon:rad} is a natural analogue of the genus zero property of monstrous moonshine. It also naturally generalizes Conjecture 5.4 of \cite{UM}. Evidently the case $X=A_8^3$ requires special treatment from the point of view of Rademacher sums, but the difference between $\check{H}^X_{g}$ and $-2R_{\Gamma_0(n_g),\check{\nu}^X_g}^X$ for $X=A_8^3$ is slight, for the coefficients of $\check{t}^{(9)}_g$ are bounded, and almost always\footnote{The coefficients of $\check{t}_g^{(9)}$ are supported on perfect square exponents, so asymptotically, 100\% of them vanish.} zero. 

\subsection{Paramodular Forms}\label{sec:conj:pas}

The content of \S\ref{sec:conj:mod} and \S\ref{sec:conj:rad} demonstrate the importance of the theta quark $Q_{1,1}$ (cf. (\ref{eqn:thqk})) to umbral moonshine. In this short section we 
point out a relation between $Q_{1,1}$ 
and the geometry of complex surfaces, and a possible connection to physics.

To prepare for this recall the {\em (degree $2$) Siegel upper half-space}, defined by 
\begin{gather}
\HH_2:=\left\{
Z= \begin{pmatrix} \tau & z \\ z & \sigma \end{pmatrix}\in M_2(\CC)
\mid
\Im(Z)>0
\right\},
\end{gather}
which is acted on naturally by the symplectic group $\Sp_4(\RR)$. For $t$ a positive integer  
define the {\em paramodular group} $\G_t<\Sp_4(\QQ)$ by setting
\begin{gather}
	\G_t:=
	\left\{
	\begin{pmatrix}
		*&	*t&	*&	*\\
		*&	*&	*&	\frac1t*\\
		*&	*t&	*&	*\\
		*t&	*t&	*t&	*
	\end{pmatrix}
	\in\Sp_4(\QQ)\mid \text{ all $*$ in $\ZZ$}
	\right\}.
\end{gather}
Then $\cA_t:=\Gamma_t\backslash\HH_2$ is a coarse moduli space for $(1,t)$-polarized abelian surfaces (cf. \cite{MR1277050}, where $\Gamma_t$ is denoted $\Gamma[t]$).

For $k$ an integer the {weight $k$} action of $\G_t$ on functions $F:\HH_2\to\CC$ is defined by setting
\begin{gather}\label{eqn:para:actn}
	(F|_k\g)(Z):={\det(CZ+D)^{-k}}F\left({(AZ+B)}{(CZ+D)^{-1}}\right)
\end{gather}
for $\g=\left(\begin{smallmatrix}A&B\\C&D\end{smallmatrix}\right)\in\G_t$. 
Holomorphic functions that are invariant for this action are 
called {\em paramodular forms} of weight $k$ for $\Gamma_t$, or {\em Siegel modular forms} in case $t=1$. Write $M_k(\G_t)$ for the vector space they comprise. Then the space of holomorphic sections of the canonical line bundle on $\cA_t$ may be identified with $M_3(\G_t)$. According to Hilfsatz 3.2.1 of \cite{MR0469872} a holomorphic differential on $\cA_t$ represented by $F\in M_3(\G_t)$ extends to a holomorphic differential on a non-singular model of a compactification of $\cA_t$ if and only if $F$ is cuspidal.

Suppose $F\in M_k(\G_1)$ is a Siegel modular form. Then setting $p=e(\s)$ and writing $F(Z)=\sum_{m\geq 0}\phi_m(\t,z)p^m$ it follows from the invariance of $F$ under the action (\ref{eqn:para:actn}) that $\phi_m$ is 
a Jacobi form\footnote{This is one of the main motivations for the notion of Jacobi form. Cf. \cite{feingold_frenkel} for an early analysis together with applications to hyperbolic Kac--Moody Lie algebras.} of weight $k$ and index $m$. In particular, $\phi_1$ has index $1$. Maass discovered \cite{MR582704} a lifting $J_{k,1}\to M_k(\Gamma_1)$ which reverses this process, assigning a Siegel modular form $F\in M_k(\G_1)$ to a Jacobi form $\phi\in J_{k,1}$ in such a way that $\phi$ is the coefficient of $p$ in $F$. Gritsenko introduced a generalization $J_{k,t}\to M_k(\G_t)$ of the Maass lift in \cite{MR1345176} (cf. also \cite{MR1277050,GriNik_AutFrmLorKMAlgs_II}), 
and a further generalization 
adapted to Jacobi forms with level and character appears as Theorem 2.2 in \cite{MR2806099}.

Applying Theorem 2.2 of \cite{MR2806099} to 
the 
theta quark $Q_{1,1}$ we obtain a paramodular form
\begin{gather}
	X_{1,9}(Z):=\sum_{\substack{m>0\\m=1\xmod 3}}\widetilde Q_{1,1}|_1T^{(1)}_-(m)(Z)
\end{gather}
of weight $1$ for $\G_9$ with a character of order $3$. 
(See loc. cit. for the operators $\phi\mapsto \widetilde\phi|_kT^{(N)}_-(m)$.) So the cube of $X_{1,9}$ defines a holomorphic differential on  $\cA_9$. 

It was shown by O'Grady \cite{MR1030139} (see also \cite{MR1827859}) that the Satake compactification of $\cA_9$ is rational, so there are no cusp forms of weight $3$ for $\G_9$. So $X_{1,9}^3$ is an example of a paramodular form of odd weight that is not a cusp form. 
It is in some sense a first example of a non-cuspidal paramodular form with odd weight, because it can be shown\footnote{We thank Cris Poor and David Yuen for explaining this.} by restriction to the $1$-dimensional cusps of $\Gamma_t$ that all forms in $M_k(\Gamma_t)$ are cuspidal when $k$ is odd and $t$ is neither divisible by $16$, nor divisible by the square of any odd prime. So all odd weight paramodular forms for $\Gamma_t$ are cuspidal if $t<9$. 

We refer to \cite{MR3031888} 
for a detailed analysis of the cusps of $\Gamma_t$. 
Theorem 8.3 of \cite{MR3283174} gives a more general construction of paramodular forms of weight $3$, 
and we can recover $X_{1,9}^3$ by taking $a_i=b_i=1$ there. 
Robust methods for computing spaces of paramodular forms precisely are developed and applied in \cite{MR3498287}.

It is interesting to note that $\mathcal{A}_t$ also appears as a moduli space describing massless degrees
of freedom in certain compactifications of heterotic string theory, and associated paramodular
forms have been shown 
\cite{LopesCardoso:1996zj,MR1668093,MR3158917} 
to govern one-loop corrections to their interaction terms. 
This may be a good setting in which to understand the physical significance of $Q_{1,1}$.
A discussion of umbral mock modular forms in this context appears in \S5.5 of \cite{UM}.

\section{Weight One Jacobi Forms}\label{sec:wt1}

In this section we prove our main result. 
To formulate it set $\phi^X_g:=\sum_r H^X_{g,r}\theta_{m,r}$ for $X$ a Niemeier root system and $g\in G^X$. In general $\phi^X_g$ is a (weak) mock Jacobi form of weight $1$ and index $m$, where $m=m^X$ is the Coxeter number of any simple component of $X$. We refer to \cite{Dabholkar:2012nd} or \cite{omjt} for background on mock Jacobi forms.

\begin{thm}\label{thm:wt1-main}
Let $X$ be a Niemeier root system and let $g\in G^X$. 
If $\xi$ is a holomorphic Jacobi form of weight $1$ and index $m^X$ 
with the same level and multiplier system as $\phi^X_g$ then $\xi=0$, except possibly if $X=A_8^3$ and $o(g)=0\xmod 3$.
\end{thm}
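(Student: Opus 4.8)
The plan is to use the theta decomposition to convert the assertion into a statement about vector-valued modular forms of weight $\tfrac12$, and then to exploit the rigidity of such forms. If $\xi$ is a holomorphic Jacobi form of weight $1$ and index $m=m^X$ with the same level $n_g$ and multiplier system as $\phi^X_g$, then writing $\xi=\sum_{r\bmod 2m}h_r\theta_{m,r}$ as in (\ref{eqn:jac-thtdec}), the vector $(h_r)$ is a holomorphic vector-valued modular form of weight $\tfrac12$ for $\Gamma_0(n_g)$ (on the metaplectic cover), transforming under a representation $\rho$ assembled from the Weil representation attached to the discriminant form of $\sqrt{2m}\,\ZZ$, the level-$n_g$ structure, and the multiplier system of $\phi^X_g$. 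Since the weight is odd, the action of $-I$ forces $h_{-r}=-h_r$; in particular the surviving components are those indexed by $r\in I^X$, and they constitute a weight $\tfrac12$ form for the corresponding anti-invariant sub-representation.

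Next I would invoke the Serre--Stark phenomenon in the vector-valued setting, that is, the representation-theoretic package of Skoruppa (cf. \cite{MR2512363}, with background reviewed in \S\ref{sec:wt1:weil}), which says that every holomorphic modular form of weight $\tfrac12$ is a linear combination of unary theta series. A non-zero $\xi$ would therefore produce, in each surviving component, a non-trivial combination of series $\sum_n\psi(n)q^{tn^2}$ with $\psi$ an even Dirichlet character; in particular each $h_r$ would have a $q$-expansion supported on a single square class of exponents, with a prescribed leading exponent. Comparing these rigid constraints against the data actually carried by $\rho$, namely the eigenvalues of the generator $T=\left(\begin{smallmatrix}1&1\\0&1\end{smallmatrix}\right)$ on each component together with the action of the remaining coset representatives and of the Atkin--Lehner involutions of $\Gamma_0(n_g)$, one finds that no such combination of theta series can occur. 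This is precisely the mechanism packaged into the lemmas of \S\ref{sec:wt1:pmr}, whose hypotheses amount to requiring that $n_g$ not be divisible by high powers of $2$ or $3$.

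Granting those lemmas, the theorem becomes a finite verification: one runs through the Niemeier root systems $X$ and the conjugacy classes $[g]\subset G^X$, reads off the level $n_g$ and multiplier system $\check\nu^X_g$ (equivalently $\nu^X_g$) from the tables of \S\ref{sec:levels}, and applies the appropriate lemma of \S\ref{sec:wt1:pmr} to conclude $\xi=0$. The finitely many pairs $(X,g)$ whose level is divisible by a power of $2$ or $3$ too large for \S\ref{sec:wt1:pmr} all have small index $m^X$, and for those I would instead use the more elementary argument of \S\ref{sec:wt1:exp}, bounding directly the polar and constant terms of the finitely many candidate theta-coefficients so as to force their vanishing.

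The main obstacle is exactly the $2$- and $3$-adic analysis: the local structure of the Weil representation at $2$ and $3$, and the Serre--Stark classification there, are genuinely more delicate, and the conclusion is in fact false when $X=A_8^3$ and $o(g)\equiv 0\pmod 3$, where $n_g\in\{9,36\}$ is divisible by $9$ and the theta quarks $\xi^{(9)}_{3A}$, $\xi^{(9)}_{6A}$ of (\ref{eqn:jac:f9s}) furnish non-zero elements. The argument must therefore be organized so that every genuinely $2$- or $3$-heavy level either falls to the elementary small-index method or is precisely the excluded $A_8^3$ case; verifying that this case division is exhaustive is the delicate bookkeeping step.
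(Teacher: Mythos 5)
Your overall architecture matches the paper's: reduce to the vanishing of $J_{1,m^X}(N_g)$ for the levels tabulated in \S\ref{sec:levels}, dispose of the bulk of the cases with the representation-theoretic machinery of \S\ref{sec:wt1:pmr}, and isolate $X=A_8^3$ with $o(g)=0\xmod 3$ as the genuine exception. The gap is in your treatment of the residual cases. The classes that escape Lemma \ref{lem:wt1-mnot0mod8nnot0mod32mdiv1mod4ndivnocb} are exactly those with $27\mid m^XN_g$, namely $(m,N)\in\{(3,144),(6,36),(30,36)\}$ coming from $A_2^{12}$, $D_4^6$ and $E_8^3$, and your plan to fall back on the elementary exponent method of \S\ref{sec:wt1:exp} for these does not go through. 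First, $E_8^3$ has index $m^X=30$, so these are not all small-index cases. Second, and more decisively, the criterion of Lemma \ref{lem:wt1-exps} is inconclusive precisely where it is needed: for $J_{1,3}(144)$ one must take $M=36$, so $m'=12$, and the congruence $12r^2+s^2t=0\xmod 144$ is solved by $(r,s,t)=(2,4,6)$; likewise for $J_{1,6}(36)$ (with $M=18$, $m'=3$) the congruence $3r^2+s^2t=0\xmod 72$ is solved by $(r,s,t)=(4,2,6)$. So the exponent obstruction cannot rule out a nonzero form in these spaces, and Proposition \ref{prop:wt1-expsapp} only reaches $J_{1,3}(4\cdot 3^a)$, not $J_{1,3}(144)$. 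The paper closes these cases instead with three further bespoke applications of the character formula of Proposition \ref{prop:wt1-precise} (Lemmas \ref{lem:wt1-m3n144}, \ref{lem:wt1-m6n36}, \ref{lem:wt1-m30n36}), which require explicit inner-product computations against the character tables of $\SL_2(\ZZ/9\ZZ)$ and, for $E_8^3$, of $\SL_2(\ZZ/25\ZZ)$; nothing in your outline substitutes for these.

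A secondary inaccuracy: you describe the mechanism of \S\ref{sec:wt1:pmr} as comparing the square-class and leading-exponent rigidity of unary theta series against the $T$-eigenvalues of the representation. That is the mechanism of \S\ref{sec:wt1:exp} (the Serre--Stark-type classification, Satz 5.2 of \cite{Sko_Thesis}), which the paper explicitly says is not powerful enough for the main theorem. What \S\ref{sec:wt1:pmr} actually uses is Skoruppa's dimension formula expressing $\dim J_{1,m}(N)$ as a sum of scalar products $\langle\vartheta_m^-\vartheta_{m'}^+,\hat1_N\rangle$, evaluated prime-by-prime via the factorization of $\Mpt(\ZZ/4M\ZZ)$; Atkin--Lehner involutions play no role. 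Neither issue affects your (correct) identification of the exceptional $A_8^3$ classes or the reduction to the tables of \S\ref{sec:levels}, but as written the case division is not exhaustive.
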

Theorem \ref{thm:wt1-main} has a direct bearing on Conjecture \ref{conj:conj:moon:rad}.
\begin{cor}\label{cor:wt1-rad}
Let $X$ be a Niemeier root system and let $g\in G^X$. If the Rademacher sum $R^X_{\Gamma_0(n_g),\check{\nu}_g^X}$ converges then the identity for $\check{H}^X_g$ predicted in Conjecture \ref{conj:conj:moon:rad} is true. 	
\end{cor}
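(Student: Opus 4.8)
The strategy is to reduce the corollary to Theorem \ref{thm:wt1-main} together with standard properties of Rademacher sums. First I would recall that a convergent Rademacher sum $R^X_{\Gamma_0(n_g),\check\nu^X_g}$ is, by construction, a harmonic (mock) modular form of weight $\frac12$ for $\Gamma_0(n_g)$ with multiplier $\check\nu^X_g$ and shadow proportional to $\check S^X_g$, whose only polar term is $q^{-\frac1{4m}}\gt e_1$ at the infinite cusp and which is bounded (i.e. $O(1)$) at every other cusp. This is exactly the content of the general vector-valued theory of \cite{2014arXiv1406.0571W}: convergence of the sum produces a form with the prescribed principal part at $\infty$, no other poles, and the correct automorphy. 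Thus $-2R^X_{\Gamma_0(n_g),\check\nu^X_g}$ has precisely the singular behaviour $-2q^{-\frac1{4m}}\gt e_1 + O(1)$ that $\check H^X_g$ is known to have (cf. the displayed expansion $\check H^X_{g,1}=-2q^{-\frac1{4m}}+O(q^{1-\frac1{4m}})$ preceding Conjecture \ref{conj:conj:mod:Kell}), and it has the same multiplier system and shadow as $\check H^X_g$.

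Next I would pass from the $I^X$-vector-valued functions $\check H^X_g$ back to the full theta-decomposed objects. Form the difference $\delta_g:=\check H^X_g + 2R^X_{\Gamma_0(n_g),\check\nu^X_g}$ (in the case $X\neq A_8^3$ or $o(g)\neq 0\bmod 3$; in the exceptional case subtract $\check t^{(9)}_g$ as in the statement, noting that $\check t^{(9)}_g$ contributes only bounded, square-supported terms). By the previous paragraph $\delta_g$ is a genuine harmonic modular form of weight $\frac12$ for $\Gamma_0(n_g)$ with multiplier $\check\nu^X_g$ whose principal parts cancel, so $\delta_g$ is bounded at every cusp; moreover $\check H^X_g$ and $R^X_{\Gamma_0(n_g),\check\nu^X_g}$ have the same shadow, so the non-holomorphic parts cancel and $\delta_g$ is holomorphic. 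Re-expanding into theta functions, $\sum_{r}\delta_{g,r}\theta_{m,r}$ (using $\delta_{g,-r}=-\delta_{g,r}$ and the $I^X$-to-full-index dictionary recorded in \S\ref{sec:conj:mod}) is a holomorphic Jacobi form of weight $1$ and index $m^X$ with the same level and multiplier system as $\phi^X_g$.

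Finally, invoke Theorem \ref{thm:wt1-main}: under the stated hypotheses (and in the $A_8^3$ case precisely the combination appearing in Conjecture \ref{conj:conj:moon:rad} removes the $o(g)=0\bmod 3$ obstruction, since the theta-coefficients of $\xi^{(9)}_g$ are exactly $\check t^{(9)}_g$), this Jacobi form must vanish identically, hence $\delta_g=0$, which is the asserted identity. The one point requiring care — and the main obstacle — is verifying that $\check H^X_g$ and the Rademacher sum really do share the \emph{same} shadow and multiplier system, so that the difference is holomorphic and the theta-lift lands in exactly the space to which Theorem \ref{thm:wt1-main} applies; this is where one must use the detailed matching of $\check\nu^X_g$ and $\check S^X_g$ from \S\ref{sec:conj:mod} and the defining properties of the umbral shadows, rather than anything new.
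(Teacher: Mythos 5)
Your overall architecture---pass to the full theta-decomposed objects, note that the principal parts of $\phi^X_g$ and $-2R$ agree, conclude that the difference is a holomorphic Jacobi form of weight one with the level and multiplier of $\phi^X_g$, and invoke Theorem \ref{thm:wt1-main}---is the paper's. But the step you yourself flag as ``the one point requiring care'' is a genuine gap, and your proposed way of closing it is not workable as stated. You assert that the convergent Rademacher sum has shadow proportional to $\check{S}^X_g$ ``by construction,'' so that the non-holomorphic parts of $\check{H}^X_g$ and $-2R$ cancel, and you propose to justify this by matching multiplier systems. What the general theory actually gives is that the shadow of $R$ is some weight $\tfrac32$ cusp form (a Poincar\'e-type series) with the conjugate multiplier; identifying it with the specific unary theta series $\check{S}^X_g$ would require, for instance, knowing the relevant space of weight $\tfrac32$ cusp forms is one-dimensional, which is neither established nor needed. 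The paper sidesteps this entirely: it forms $\xi=\phi^X_g+2\rho$ as a holomorphic \emph{mock} Jacobi form and proves its shadow vanishes via the pairing argument from the proof of Proposition 3.5 of \cite{BruFun_TwoGmtThtLfts}---the Petersson product of that shadow with any weight $\tfrac32$ cusp form of the same type is controlled by the polar parts of the theta-coefficients $h_r$ of $\xi$, which are all bounded at all cusps, so the shadow is orthogonal to all cusp forms and hence zero. You need either this argument or an actual computation of the shadow of $R$; as written the cancellation is unproved.

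The second gap is the exceptional case $X=A_8^3$ with $o(g)=0\xmod 3$. You claim that subtracting $\check{t}^{(9)}_g$ ``removes the obstruction'' so that Theorem \ref{thm:wt1-main} applies. It does not: Theorem \ref{thm:wt1-main} explicitly excludes this case precisely because the space of holomorphic weight one Jacobi forms with the relevant index, level and multiplier is nonzero (it contains $\xi^{(9)}_g$), and subtracting one particular element of a nonzero space cannot force your difference to vanish. The paper treats this case by a separate argument: the multiplier of $\check{S}^{A_8}$ becomes block diagonal on $\Gamma_0(3)$, with blocks indexed by $\{1,2,4,5,7,8\}$ and $\{3,6\}$, so the $r=3$ and $r=6$ components of the Rademacher sum vanish identically, and this combined with the definition of the revised $H^X_g$ yields the predicted identity. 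Your proposal proves nothing in the exceptional case.
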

A proof of the convergence of the Rademacher sums $R^X_{\Gamma_0(n_g),\check{\nu}_g^X}$ is given in \S 3 of \cite{umrec}, and closely related convergence results are given in \cite{MR3582425}. The $X=A_1^{24}$ case of Conjecture \ref{conj:conj:moon:rad} was proven first in \cite{Cheng2011}, via different methods.

\begin{proof}[Proof of Corollary \ref{cor:wt1-rad}]
Let $X$ be a Niemeier root system and let $g\in G^X$. 
To the Rademacher sum $R=R^X_{\Gamma_0(n_g),\check{\nu}_g^X}$ with components $R_r$ for $r\in I^X$ we associate a $2m$-vector-valued function $\hat R=(\hat R_r)$, with components indexed by $\ZZ/2m\ZZ$, as follows. For $X\notin\{E_6^4, E_8^3\}$ we set 
$\hat R_r:=\pm R_r$ for $\pm r\in I^X$, and $\hat R_r:=0$ for $\pm r \notin I^X$. 
In case $X=E_6^4$ set $R_7:=R_1$, $R_8:=R_4$, $R_{11}:=R_5$ and $\hat I^X:=\{1,4,5,7,8,11\}$, and then define $\hat R_r:=\pm R_r$ for $\pm r\in \hat I^X$, and $\hat R_r:=0$ for $\pm r \notin \hat I^X$. For $X=E_8^3$ set $R_{29}:=R_{19}:=R_{11}:=R_1$, $R_{23}:=R_{17}:=R_{13}:=R_{7}$, and $\hat I^X:=\{1,7,11,13,17,19,23,29\}$, and then define $\hat R_r$ in analogy with the case $X=E_6^4$.
Then $\rho:=\sum_r \hat R_r\theta_{m,r}$ is a weak mock Jacobi form of weight $1$ and index $m$ with the same level and multiplier system as $\phi^X_g$. Also, the polar parts of $-2\hat R$ and $H^X_g$ are the same by construction, so $\xi:=\phi^X_g+2\rho$ is a holomorphic mock Jacobi form of weight $1$ with some level. 

We claim that the shadow of $\xi$ must vanish, so that $\xi$ is actually a holomorphic Jacobi form. To see this let $\xi=\sum h_r\th_{m,r}$ be the theta-decomposition of $\xi$ and let $\sum \bar{g}_r\th_{m,r}$ be the shadow of $\xi$. Then $g=(g_r)$ is a $2m$-vector-valued cusp form of weight $\frac32$ transforming with the conjugate multiplier to that of $H^X_g=(H^X_{g,r})$. The proof of Proposition 3.5 in \cite{BruFun_TwoGmtThtLfts} shows that if $g'=(g_r')$ is another vector-valued cusp form of weight $\frac32$ transforming in the same way as $g$ then the Petersson inner product $\langle g,g'\rangle$ vanishes unless some of the $h_r$ have non-zero polar parts. But all the $h_r$ are bounded at all cusps by construction. So $g=(g_r)$ is orthogonal to all cusp forms, so vanishes identically. 

So $\xi$ is a holomorphic Jacobi form as claimed. Applying Theorem \ref{thm:wt1-main} to $\xi$ we conclude that $\xi$ vanishes identically unless $X=A_8^3$ and $o(g)=0\xmod 3$. So $\phi^X_g=-2\rho$, which is the prediction of Conjecture \ref{conj:conj:moon:rad}, for all $g\in G^X$ for all $X$, except when $X=A_8^3$ and $g$ belongs to the $3A$ or $6A$ conjugacy classes of $G^X$. To complete the proof of Conjecture \ref{conj:conj:moon:rad} we need to show that, for $X=A_8^3$ and $g\in 3A\cup 6A$, the $r=3$ and $r=6$ components of the Rademacher sum $R$ vanish identically. This follows from the fact that if $\nu=(\nu_{ij})_{0<i,j<9}$ denotes the multiplier system of the $8$-vector-valued modular form $\check{S}^{A_8}$ (cf. (\ref{eqn:checkSAm-1})), and if $\gamma\in \Gamma_0(3)$, then the matrix entries $\nu_{ij}(\gamma)$ vanish whenever $i=0\xmod 3$ and $j\neq 0\xmod 3$, or $j=0\xmod 3$ and $i\neq0 \xmod 3$. In other words, the multiplier of $\check{S}^{A_8}$ becomes block diagonal, with blocks indexed by $\{1,2,4,5,7,8\}$ and $\{3,6\}$, when restricted to $\Gamma_0(3)$. It follows from this, and the description of the multiplier systems for $3A$ and $6A$ in \cite{MUM}, or the explicit descriptions of the $H^X_g$ in \cite{umrec}, that the corresponding Rademacher sums have vanishing $r=3$ and $r=6$ components. The prediction of Conjecture \ref{conj:conj:moon:rad} follows, for $X=A_8^3$ and $o(g)=0\xmod 3$. This completes the proof.
\end{proof}

We prove Theorem \ref{thm:wt1-main} in \S\ref{sec:wt1:pmr}. To prepare for this 
we review the metaplectic double cover of the modular group, and some results from \cite{Sko_Thesis} in \S\ref{sec:wt1:weil}, and we review some facts about characters of Weil representations following \cite{MR2512363} in \S\ref{sec:wt1:ppp}. In \S\ref{sec:wt1:exp} we describe an 
approach that, although not powerful enough to prove our main theorem, can rule out non-zero holomorphic Jacobi forms of weight one  for infinitely many indexes and levels.
Our presentation (and in particular, notation) is similar to that which appears in \S2 of \cite{omjt}.

\subsection{Weil Representations}\label{sec:wt1:weil}

Write ${\Mpt}(\ZZ)$ for the metaplectic double cover of $\SL_2(\ZZ)$. We may realize $\Mpt(\ZZ)$ as the set of pairs $(\gamma,\upsilon)$ where $\gamma=\left(\begin{smallmatrix}a&b\\c&d\end{smallmatrix}\right)\in \SL_2(\ZZ)$ and $\upsilon:\HH\to\CC$ is a holomorphic function satisfying $\upsilon(\tau)^2=c\tau+d$. Then the product in $\Mpt(\ZZ)$ is given by $(\gamma,\upsilon(\tau))(\gamma',\upsilon'(\tau))=(\gamma\gamma',\upsilon(\gamma'\tau)\upsilon'(\tau))$. 
For generators we may take $\widetilde{T}:=(T,1)$  and $\widetilde{S}:=(S,\tau^\frac12)$ where $T:=\left(\begin{smallmatrix}1&1\\0&1\end{smallmatrix}\right)$ and $S:=\left(\begin{smallmatrix}0&-1\\1&0\end{smallmatrix}\right)$.

According to \cite{MR0332663} for example, if $\gamma=\left(\begin{smallmatrix}a&b\\c&d\end{smallmatrix}\right)\in\SL_2(\ZZ)$ is such that $c=0\xmod 4$ and $d=1\xmod 4$ then $j(\gamma,\tau):=\theta_{1,0}(\gamma\tau,0)\theta_{1,0}(\tau,0)^{-1}$ satisfies $j(\gamma,\tau)^2=c\tau+d$. So we may consider the pairs $(\gamma,j(\gamma,\tau))$ where $\gamma$ belongs to the {\em principal congruence subgroup} $\Gamma(4m)$, being the kernel of the natural map $\SL_2(\ZZ)\to\SL_2(\ZZ/4m\ZZ)$. These constitute a normal subgroup of $\Mpt(\ZZ)$ which we denote $\Gamma(4m)^*$.
The corresponding quotient $\Mpt(\ZZ)/\Gamma(4m)^*$ is a double cover---let us denote it $\widetilde{\SL_2}(\ZZ/4m\ZZ)$---of $\SL_2(\ZZ/4m\ZZ)$.

We will be concerned with the representations of $\Mpt(\ZZ)$ arising, via a construction of Weil, from (cyclic) finite quadratic spaces. A {\em finite quadratic space} is a finite abelian group $A$ equipped with a function $Q:A\to \QQ/\ZZ$ such that $Q(na)=n^2Q(a)$ for $n \in \ZZ$ and $a\in A$, and such that $B(a,b):=Q(a+b)-Q(a)-Q(b)$ is a $\ZZ$-bilinear map. 
Let $\CC A=\bigoplus_{a\in A} \CC e^a$ be the group algebra of $A$. 
The {\em Weil representation} associated to $(A,Q)$ is the left $\Mpt(\ZZ)$-module structure on $\CC A$ defined by requiring that
\begin{gather}\label{eqn:wt1:weil-weilrep}
\begin{split}
	\widetilde{T}e^a&=e(Q(a))e^a,\\
	\widetilde{S}e^a&=\sigma_A{|A|^{-\frac12}}\sum_{b\in A}e(-B(a,b))e^b,
\end{split}
\end{gather} 
for $a\in A$, where $\sigma_A:=|A|^{-\frac12}\sum_{a\in A}e(-Q(a))$. In general $\sigma_A$ is an eighth root of unity. The action (\ref{eqn:wt1:weil-weilrep}) factors through $\SL_2(\ZZ)$ if and only if $\sigma_A^4=1$.

The 
$\th_{m,r}$ (cf. (\ref{eqn:jac-thmr})) furnish concrete realizations of such Weil representations.
To explain this let 
$\Th_m$ denote the vector space spanned by the 
$\theta_{m,r}$ for $r\in \ZZ/2m\ZZ$, 
and define a 
right $\Mpt(\ZZ)$-module structure on $\Th_m$ by setting
\begin{gather}\label{eqn:wt1-Mp2action}
	(\f|_{\frac12,m}(\gamma,\upsilon))(\tau,z) :=  \f\left(\frac{a\tau+b}{c\tau+d},\frac{z}{c\tau+d}\right)\frac1{\upsilon(\tau)}e\left(-m\frac{cz^2}{c\tau+d}\right)
\end{gather}
for $\f\in \Th_m$ and $(\gamma,\upsilon)\in \Mpt(\ZZ)$, when $\gamma=\left(\begin{smallmatrix}a&b\\c&d\end{smallmatrix}\right)$. Then $\Th_m$ is isomorphic to the dual of the Weil representation attached to $D_m:=(\ZZ/2m\ZZ,x\mapsto\frac{x^2}{4m})$.
Explicitly, we may associate $\theta_{m,r}$ to the linear map $e^a\mapsto \delta_{a,r}$ on $\CC D_m$. (Note that $\sigma_A=e(-\frac18)$ in (\ref{eqn:wt1:weil-weilrep}) for $A=D_m$, for any $m$.)
Also, $\Th_m$ is trivial for the action of $\Gamma(4m)^*$, so may be regarded as a representation of $\Mpt(\ZZ/4m\ZZ)$.

Except when $m=1$ the $\Th_m$ are not irreducible. 
We will make use of the explicit decomposition into irreducible $\Mpt(\ZZ)$-modules which is given in Satz 1.8 of \cite{Sko_Thesis}. To describe this, note first that the orthogonal group 
$O_m:=\left\{a\in \ZZ/2m\ZZ\mid a^2=1\xmod 4m\right\}$
of $D_m$ acts naturally on $\Th_m$, via 
$\theta_{m,r}\cdot a:=\theta_{m,ra}$ for $r\in \ZZ/2m\ZZ$ and $a\in O_m$,
and this commutes with the action of $\Mpt(\ZZ)$. 
Next, define a Hermitian inner product on $\Th_m$ by requiring that the $\theta_{m,r}$ 
furnish an orthonormal basis. Then the action of $\Mpt(\ZZ)$ on $\Th_m$ is unitary with respect to this inner product, according to Lemma 1.4 of \cite{Sko_Thesis}. 
For $d$ an integer the assignment $U_d:\f(\tau,z)\mapsto \f(\tau,dz)$ defines an $\Mpt(\ZZ)$-module map $\Th_m\to \Th_{md^2}$. 
Define $\Th_m^\new$ to be the orthogonal complement in $\Th_m$ of the subspace spanned by the images of the maps $U_d$ for $d$ positive and $d^2|m$. 
The action of $O_m$ preserves $\Th_m^\new$ so for $\alpha\in \widehat{O}_m:=\hom(O_m,\CC^\times)$ we may define a sub $\Mpt(\ZZ)$-module of $\Th_m^\new$ by setting
\begin{gather}
	\Th_m^{\new,\alpha}:=\left\{\phi\in \Th_m^\new\mid \phi\cdot a = \alpha(a)\phi\right\}.
\end{gather}
Then 
$\Th_m^{\new,\alpha}$ is irreducible for $\Mpt(\ZZ)$, and the $\Mpt(\ZZ)$-irreducible submodules of $\Th_m$ are exactly the $\Th_{m/d^2}^{\new,\alpha}|U_d$ where 
$d^2|m$, 
according to Satz 1.8 of \cite{Sko_Thesis}. Note that the decomposition $\Th_m=\Th_m^-\oplus \Th_m^+$ is preserved by $\Mpt(\ZZ)$, where
$\Th_m^\pm$ denotes the span of the $\th_{m,r}^\pm$
(cf. (\ref{eqn:jac-thmrpm})). Indeed, the maps $U_d$ furnish an $\Mpt(\ZZ)$-module isomorphism
\begin{gather}\label{eqn:wt1-thmpmdec}
	\Th_m^\pm\simeq \bigoplus_{d^2|m}\bigoplus_{\substack{\alpha\in \widehat{O}_{m/d^2}\\ \alpha(-1)=\pm 1 }}\Th_{m/d^2}^{\new,\alpha}.
\end{gather}

\subsection{Exponents}\label{sec:wt1:exp}

In this short section we present a simple criterion (Lemma \ref{lem:wt1-exps}) which, although not powerful enough to handle all the cases of Theorem \ref{thm:wt1-main}, can be used to prove the vanishing of $J_{1,m}(N)$ for many cases in which $N$ is not square-free. 

To prepare for the proof note that since $\Gamma(4m)^*$ acts trivially on the $\theta_{m,r}$, 
the theta-coefficients $h_r$ (cf. (\ref{eqn:jac-thtdec})) 
of a holomorphic Jacobi form $\xi(\tau,z)$ of weight $1$, index $m$ and level $N$  
belong to the space $M_{\frac12}(\Gamma_0(N)\cap\Gamma(4m))$. 
Here
$M_{\frac12}(\Gamma)$, for $\Gamma$ a subgroup of some $\Gamma(4m)$, is the vector space of holomorphic functions $h:\HH\to \CC$ such that $h|_{\frac12}(\gamma,j(\gamma,\tau))=h$ for $\gamma\in \Gamma$, and $h|_{\frac12}(\gamma,\upsilon)$ remains bounded as $\Im(\tau)\to \infty$ for every $(\gamma,\upsilon)\in \Mpt(\ZZ)$, where
\begin{gather}
	(h|_{\frac12}(\gamma,\upsilon))(\tau):=h\left(\frac{a\tau+b}{c\tau+d}\right)\frac1{\upsilon(\tau)}.
\end{gather}
Since $\Gamma(4m)^*$ is normal in $\Mpt(\ZZ)$ the space $M_{\frac12}(\Gamma(4m))$ is naturally an $\Mpt(\ZZ)$-module. Also, $U_0:\f(\tau,z)\mapsto \f(\tau,0)$ defines a map $\Th_m\to M_{\frac12}(\Gamma(4m))$. 
\begin{prop}\label{prop:wt1-exps}
Suppose that $\xi\in J_{1,m}(N)$ for some positive integers $m$ and $N$. Choose $M$ so that $m|M$ and $N|4M$. 
Then the theta-coefficients of $\xi$ belong to 
$\sum_{M'|M}\Th_{M'}|U_0$.
\end{prop}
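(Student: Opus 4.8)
The plan is to show that the theta-coefficients $h_r$ of $\xi$ lie in the subspace of $M_{\frac12}(\Gamma(4m))$ spanned by the $U_0$-images of the $\Th_{M'}$ with $M'\mid M$, by exploiting the $\Mpt(\ZZ)$-module structure on everything in sight. First I would observe that since $\xi\in J_{1,m}(N)$, the vector $h=(h_r)_{r\xmod 2m}$ is a holomorphic vector-valued modular form of weight $\frac12$ for $\Gamma_0(N)$ whose components are bounded at all cusps, and by the remark preceding the statement, $\Gamma(4m)^*$ acts trivially on the $\theta_{m,r}$, so each $h_r\in M_{\frac12}(\Gamma_0(N)\cap\Gamma(4m))$. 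Applying the slash operator for a set of coset representatives of $\Gamma_0(N)\cap\Gamma(4m)$ in $\Gamma(4m)$ produces finitely many functions in $M_{\frac12}(\Gamma(4m))$, and the span $W$ of all of these is an $\Mpt(\ZZ)$-submodule of $M_{\frac12}(\Gamma(4m))$.

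Next I would identify where this submodule can live. The key input is that $M_{\frac12}(\Gamma(4M))$ — equivalently the $\Mpt(\ZZ)$-module of holomorphic weight $\frac12$ forms of level dividing $4M$ — is, by the classical theory (Serre--Stark in the metaplectic/vector-valued formulation, or directly Skoruppa's Satz on $\Th$'s), spanned precisely by the functions $\theta_{M',r}(\tau,0)=(U_0\theta_{M',r})(\tau)$ for $M'\mid M$ and $r\xmod 2M'$. Since $N\mid 4M$ and $4m\mid 4M$, every $h_r$ lies in $M_{\frac12}(\Gamma(4M))$; hence $W\subseteq M_{\frac12}(\Gamma(4M)) = \sum_{M'\mid M}\Th_{M'}|U_0$. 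Because $h_r\in W$, the conclusion follows immediately. The point of enlarging $m$ to $M$ with $m\mid M$ and $N\mid 4M$ is exactly to absorb both the index and the level into a single principal congruence subgroup $\Gamma(4M)$ on which weight $\frac12$ forms are classified.

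The main obstacle, and the step deserving the most care, is pinning down the statement that $M_{\frac12}(\Gamma(4M))$ is spanned by theta series $\theta_{M',r}(\tau,0)$ with $M'\mid M$ — i.e. the vector-valued form of the Serre--Stark basis theorem, phrased so that the level condition $\Gamma(4M)$ matches up with divisors $M'$ of $M$ rather than, say, $M'$ with $4M'\mid 4M$ (these coincide, but the matching must be checked). One must be slightly careful that the boundedness-at-all-cusps condition built into the definition of $M_{\frac12}(\Gamma)$ is the right holomorphy-at-cusps hypothesis for Serre--Stark, and that the multiplier $j(\gamma,\tau)$ used here is the one for which $\theta_{1,0}$, hence all $\theta_{M',r}$, transform correctly. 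Once that classification is in hand the argument is a short bookkeeping exercise with coset representatives and the $\Mpt(\ZZ)$-action, so I would allocate essentially all the work to citing or reproving the weight $\frac12$ structure theorem in the form needed, and very little to the rest.
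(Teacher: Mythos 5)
Your argument is correct and follows essentially the same route as the paper: the observation that each theta-coefficient lies in $M_{\frac12}(\Gamma(4M))$ because $\Gamma(4M)\subseteq\Gamma_0(N)\cap\Gamma(4m)$, followed by the weight $\frac12$ structure theorem, which the paper invokes in the form of Satz 5.2 of \cite{Sko_Thesis}, namely $M_{\frac12}(\Gamma(4M))=\bigoplus_{M'|M}\bigoplus_{\alpha'(-1)=1}\Th_{M'}^{\new,\alpha'}|U_0\subseteq\sum_{M'|M}\Th_{M'}|U_0$. The coset-representative/submodule bookkeeping you describe is harmless but unnecessary once each $h_r$ is placed in $M_{\frac12}(\Gamma(4M))$.
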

\begin{proof}
Assume that $m$, $N$ and $M$ are as in the statement of the proposition. Then the theta coefficients of $\xi$ belong to $M_\frac12(\Gamma(4M))$. According to Satz 5.2 of \cite{Sko_Thesis} we have 
\begin{gather}
M_{\frac12}(\Gamma(4M))=\bigoplus_{M'|M}\bigoplus_{\substack{\a'\in \widehat{O}_{M'}\\\a'(-1)=1}}\Th_{M'}^{\new,\a'}|U_0.
\end{gather}
The claimed result follows.
\end{proof}

Proposition \ref{prop:wt1-exps} 
puts restrictions on the exponents that can appear in the theta-coefficients of a candidate non-zero Jacobi form of weight $1$ with given index and level. On the other hand, the fact that the Fourier coefficients of such a Jacobi form can only involve integer powers of $q$ also imposes restrictions. Taking these together
we can rule out all but the zero function in many instances, especially when the index is small. 

\begin{lem}\label{lem:wt1-exps}
Suppose that $M$ is a positive integer and $m$ is a positive divisor of $M$. Set $m'=\frac{M}{m}$. If the equation 
\begin{gather}\label{eqn:wt1-exps}
r^2m'+s^2t=0\xmod 4M\end{gather}
has no solutions $(r,s,t)$ with $0<r<m$ and $t$ a divisor of $M$ then $J_{1,m}(4M)=\{0\}$.
\end{lem}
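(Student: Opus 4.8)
The plan is to prove the contrapositive. Suppose $\xi$ is a nonzero element of $J_{1,m}(4M)$, with theta-decomposition $\xi=\sum_{r\bmod 2m}h_r\,\theta_{m,r}$; I want to produce a solution $(r,s,t)$ of (\ref{eqn:wt1-exps}) with $0<r<m$ and $t|M$. First I would record two symmetries. Since $\left(\begin{smallmatrix}-1&0\\0&-1\end{smallmatrix}\right)\in\Gamma_0(4M)$ and elements of $J_{1,m}(4M)$ are invariant, with trivial multiplier, under its weight $1$, index $m$ action, one gets $\xi(\tau,-z)=-\xi(\tau,z)$; combined with $\theta_{m,-r}(\tau,-z)=\theta_{m,r}(\tau,z)$ this forces $h_{-r}=-h_r$, so in particular $h_0=h_m=0$ and there must be some $r$ with $0<r<m$ and $h_r\neq 0$. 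Second, invariance under $\left(\begin{smallmatrix}1&1\\0&1\end{smallmatrix}\right)$ makes $\xi$ one-periodic in $\tau$, hence its $q$-expansion has integer exponents; reading off the coefficient of $y^{r}$, which equals $q^{r^2/4m}h_r(\tau)$, shows that every $q$-exponent occurring in $h_r$ lies in $-\tfrac{r^2}{4m}+\ZZ=-\tfrac{r^2 m'}{4M}+\ZZ$, where $m'=M/m$.

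Next I would invoke Proposition \ref{prop:wt1-exps}, which applies with the integer called $M$ there taken equal to the present $M$ (legitimate since $m|M$ and trivially $4M|4M$). It gives $h_r\in\sum_{M'|M}\Th_{M'}|U_0$. Since $\theta_{M',r'}(\tau,0)=\sum_{k\in\ZZ}q^{(2kM'+r')^2/4M'}$, every $q$-exponent occurring anywhere in $\sum_{M'|M}\Th_{M'}|U_0$ has the form $\tfrac{n^2}{4M'}=\tfrac{n^2 t}{4M}$ with $t:=M/M'$ a divisor of $M$ and $n\in\ZZ$. As $h_r\neq 0$, it must therefore have a nonzero Fourier coefficient at some exponent of the shape $\tfrac{s^2 t}{4M}$ with $t|M$ and $s\in\ZZ$. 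Comparing this with the first paragraph, $\tfrac{s^2 t}{4M}\equiv-\tfrac{r^2 m'}{4M}\pmod 1$, i.e.\ $r^2 m'+s^2 t\equiv 0\pmod{4M}$ --- a solution of (\ref{eqn:wt1-exps}) with $0<r<m$ and $t|M$. Contraposition then yields $J_{1,m}(4M)=\{0\}$ whenever no such solution exists.

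The argument is largely bookkeeping once Proposition \ref{prop:wt1-exps} is in hand, and I expect the only delicate point to be the claim that a nonzero $h_r$ actually has a surviving Fourier coefficient at an exponent of the required form: since $h_r$ is merely some linear combination of the $\theta_{M',r'}|U_0$, individual coefficients could cancel. This is handled by working with $q$-supports rather than with individual basis elements --- the $q$-support of the entire space $\sum_{M'|M}\Th_{M'}|U_0$ is contained in $\bigcup_{t|M}\{\tfrac{n^2 t}{4M}:n\in\ZZ\}$, and a nonzero function cannot have all of its coefficients at exponents in that set vanish. A secondary point requiring a little care is getting the residue $-\tfrac{r^2 m'}{4M}\bmod 1$ right, for which one uses that $\ell^2\equiv r^2\pmod{4m}$ whenever $\ell\equiv r\pmod{2m}$, so that the $y^{\ell}$-coefficients of $\xi$ for all $\ell\equiv r$ impose the same constraint on $h_r$.
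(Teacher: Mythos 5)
Your proposal is correct and follows essentially the same route as the paper: apply Proposition \ref{prop:wt1-exps} to place each $h_r$ in $\sum_{M'|M}\Th_{M'}|U_0$, compare the $q$-support $\{\tfrac{s^2t}{4M}+\ZZ : t|M\}$ of that space with the congruence class $-\tfrac{r^2m'}{4M}+\ZZ$ forced by integrality of the exponents of $\xi$, and use $h_{-r}=-h_r$ to reduce to $0<r<m$. The paper phrases the exponent comparison after substituting $\tau\mapsto 4M\tau$, but this is only a cosmetic difference.
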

\begin{proof}
According to Proposition \ref{prop:wt1-exps}, if $\xi=\sum h_r\theta_{m,r}$ is the theta decomposition of a Jacobi form $\xi\in J_{1,m}(4M)$ then $h_r$ belongs to $\sum_{M'|M}\Th_{M'}|U_0$ for all $r$. Let $M'|M$ and set $t=\frac{M}{M'}$. Then $\theta_{M',s}(4M\tau,0)\in q^{s^2t}\ZZ[[q^{4M}]]$. On the other hand $h_r(4M\tau)\in q^{-r^2m'}\ZZ[[q^{4M}]]$. So in order for $h_r$ to be a non-zero linear combination of the $\theta_{M',s}(\tau,0)$ with $M'|M$ we require that (\ref{eqn:wt1-exps}) hold, for some $s$. That is, if $h_r$ is a non-zero element of $\sum_{M'|M}\Th_{M'}|U_0$ then we must have (\ref{eqn:wt1-exps}) for some $r$ and $s$, and some $t|M$. We have $h_r=-h_{-r}$ since $\xi$ has weight $1$, so we may assume that $0<r<m$. This proves the claim.
\end{proof}

The next result serves as an application of Lemma \ref{lem:wt1-exps}.

\begin{prop}\label{prop:wt1-expsapp}
The spaces $J_{1,2}(2^a)$, $J_{1,3}(4\cdot 3^a)$ and $J_{1,4}(2^a)$ are zero-dimensional for any non-negative integer $a$.
\end{prop}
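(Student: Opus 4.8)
The plan is to apply Lemma \ref{lem:wt1-exps} in each of the three cases, choosing $M$ (a multiple of the index $m$, with the required level dividing $4M$) and then checking by hand that the congruence \eqref{eqn:wt1-exps} has no solution in the prescribed range. For $J_{1,2}(2^a)$ we take $m=2$ and $M=2^{a-2}$ (handling the small values $a\leq 2$ separately, where the level is square-free and Theorem \ref{thm:ibuskocor} together with $J_{1,m}(1)=\{0\}$ already applies, noting $J_{1,2}(2)=\{0\}$ follows from Theorem \ref{thm:ibuskocor} as well). Then $m'=M/m=2^{a-3}$, the only possible value of $r$ with $0<r<m$ is $r=1$, and $t$ must be a power of $2$ dividing $M$, so \eqref{eqn:wt1-exps} becomes $2^{a-3}+s^2 t\equiv 0\pmod{2^a}$. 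The plan is to read off the $2$-adic valuation of each side: $v_2(2^{a-3})=a-3$, whereas $v_2(s^2t)$ is even plus $v_2(t)$, and one checks this cannot equal $a-3$ modulo the constraint $v_2(s^2 t+2^{a-3})\geq a$, giving a contradiction.

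For $J_{1,3}(4\cdot 3^a)$ the plan is to take $m=3$ and $M=3^a$, so that the level is $4M$, $m'=3^{a-1}$, and $r\in\{1,2\}$; since $h_r=-h_{-r}$ and $2\equiv -1\pmod{3}$ we may take $r=1$. Here $t\mid 3^a$ is a power of $3$, and \eqref{eqn:wt1-exps} reads $3^{a-1}+s^2 t\equiv 0\pmod{4\cdot 3^a}$. The plan is to split into the reduction mod $4$ (forcing $s^2 t\equiv -3^{a-1}\pmod 4$, a parity/quadratic-residue condition on $s$) and the reduction mod $3$: mod $3$ the left side is $3^{a-1}+s^2 t$, and comparing $3$-adic valuations, $v_3(3^{a-1})=a-1$ while $v_3(s^2 t)=v_3(t)$ is the only term that can be odd, so one again checks that no choice of $t\in\{1,3,\dots,3^a\}$ makes the valuations match up with the requirement that the sum be divisible by $3^a$. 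For $J_{1,4}(2^a)$ we take $m=4$, $M=2^{a-2}$, so $m'=2^{a-4}$, and $r\in\{1,2,3\}$; using $h_r=-h_{-r}$ and $3\equiv -1\pmod 8$ (noting $r$ ranges mod $2m=8$) we may take $r\in\{1,2\}$, and \eqref{eqn:wt1-exps} becomes $r^2 2^{a-4}+s^2 t\equiv 0\pmod{2^a}$ with $t$ a power of $2$ dividing $2^{a-2}$. Again a $2$-adic valuation analysis, treating $r=1$ and $r=2$ separately (for $r=2$ the left term has valuation $a-2$), finishes the argument, after the small cases $a$ with $2^a$ square-free or small are dispatched by Theorem \ref{thm:ibuskocor}.

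The main obstacle I anticipate is purely bookkeeping: the ranges of $a$ for which $4M$ or the relevant $M$ fails to be a positive integer (or for which $2^a$ is square-free) must be peeled off and handled directly via Theorem \ref{thm:ibuskocor} and Skoruppa's $J_{1,m}(1)=\{0\}$, and one must be careful that the residue $r$ in \eqref{eqn:wt1-exps} is taken modulo $2m$ while the solvability is only needed for $0<r<m$ after exploiting $h_r=-h_{-r}$. None of the congruence checks is deep — each reduces to comparing $2$-adic or $3$-adic valuations of the two summands in $r^2 m'+s^2 t$ against the modulus $4M$ — but the case division (by the value of $a$, by $r$, and by $v_2(t)$ or $v_3(t)$) needs to be laid out cleanly so that every possibility is visibly excluded. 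I would present the proof as: first dispose of the square-free/small levels, then for the remaining levels invoke Lemma \ref{lem:wt1-exps} with the stated $M$ and run the valuation argument for each admissible $r$.
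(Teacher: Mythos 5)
Your overall strategy is the same as the paper's: apply Lemma \ref{lem:wt1-exps} with $M$ a power of the relevant prime and rule out solutions of (\ref{eqn:wt1-exps}) by local analysis. But as written your key step for $J_{1,2}(2^a)$ does not close. You claim that a comparison of $2$-adic valuations shows $v_2(s^2t)$ "cannot equal $a-3$"; it certainly can (take $t=2^{a-3}$ and $s$ odd, so $v_2(s^2t)=2v_2(s)+v_2(t)=a-3$). Valuations alone only show that the two cases $v_2(s^2t)\neq a-3$ are impossible; in the remaining case one has $2^{a-3}+s^2t=2^{a-3}(1+u)$ with $u$ the odd part of $s^2t$, and the contradiction comes from the quadratic-residue fact that an odd square is $\equiv 1\pmod 8$, whence $1+u\equiv 2\pmod 8$ and the sum has valuation exactly $a-2<a$. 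This is exactly the paper's observation that the required cofactor $d\equiv 7\pmod 8$ "is neither even nor a square modulo $8$", and the same ingredient is needed for $m=4$ (for odd $r$ one needs odd squares $\equiv 1\pmod 8$; for $r=2$ one needs them $\equiv 1 \pmod 4$). You do invoke a quadratic-residue condition in the $m=3$ case, so the idea is within reach, but the $m=2$ and $m=4$ arguments must be amended to include it.

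Secondly, your treatment of the small values of $a$ is both unnecessary and partly wrong: the level $4$ is not square-free, so Theorem \ref{thm:ibuskocor} does not apply to $J_{1,2}(4)$, and even for $N=2$ that theorem only gives vanishing when $m$ is odd, so it does not yield $J_{1,2}(2)=\{0\}$ directly. The clean fix, which is how the paper arranges matters, is to use the containment $J_{1,m}(N')\subseteq J_{1,m}(N)$ for $N'\mid N$: one proves vanishing at level $4M$ for $M=m^a$ with $a$ arbitrary (so that $m\mid M$ is automatic), and every smaller prime-power level is then contained in one of these spaces. With these two repairs your proof coincides with the paper's.
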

\begin{proof}
We obtain the vanishing of $J_{1,2}(2^a)$ by applying Lemma \ref{lem:wt1-exps} with $m=2$ and $M$ an arbitrary positive power of $2$. Indeed, in this case $r=1$ so taking $M=2^a$ in (\ref{eqn:wt1-exps}) we obtain
$2^bs^2+2^{a-1}=0\xmod 2^{a+2}$
where $0\leq b\leq a$. If $s$ is a solution then $2^bs^2=2^{a-1}d$ for some $d=7\xmod 8$, but $7$ is neither even nor a square modulo $8$ so there is no such $s$.
So $J_{1,2}(2^{a+2})$ vanishes according to Lemma \ref{lem:wt1-exps}. The vanishing of $J_{1,3}(4\cdot 3^a)$ and $J_{1,4}(2^a)$ is obtained very similarly, by taking $m=3$ and $m=4$, respectively, and $M=m^a$. 
\end{proof}

\subsection{Prime Power Parts}\label{sec:wt1:ppp}

The module $\Th_m^{\new,\a}$ factors through 
$\Mpt(\ZZ/4m\ZZ)$ so it is natural to consider its prime power parts. To explain what this means
note that if $m_p$ denotes the largest power of $p$ dividing $m$ then the natural map
\begin{gather}\label{eqn:wt1:pmr-Mpt4mpparts}
	\Mpt(\ZZ/4m\ZZ)\to \Mpt(\ZZ/4m_2\ZZ)\times \prod_{p\text{ odd prime}}\SL_2(\ZZ/m_p\ZZ)
\end{gather}
is an isomorphism. Thus any irreducible $\Mpt(\ZZ/4m\ZZ)$-module can we written as an external tensor product of irreducible modules for $\Mpt(\ZZ/4m_2\ZZ)$ and the $\SL_2(\ZZ/m_p\ZZ)$. At the level of characters, if $\chi$ is an irreducible character for $\Mpt(\ZZ/4m\ZZ)$ then there are corresponding characters $\chi_p$ of $\Mpt(\ZZ/4m_2\ZZ)$ and the $\SL_2(\ZZ/m_p\ZZ)$ such that 
\begin{gather}\label{eqn:wt1:pmr-chipparts}
\chi(\gamma)=\prod_{p|4m}\chi_p(\gamma_p)
\end{gather} 
for $\gamma\in \Mpt(\ZZ/4m\ZZ)$, where $\gamma_p$ is the $\Mpt(\ZZ/4m_2\ZZ)$ or $\SL_2(\ZZ/m_p\ZZ)$ component of the image of $\gamma$ under the map (\ref{eqn:wt1:pmr-Mpt4mpparts}). Call $\chi_p$ the {\em $p$-part} of $\chi$. 

Even though 
$\CC D_m$---being the left $\Mpt(\ZZ)$-module dual to the right $\Mpt(\ZZ)$-module structure on $\Th_m$---is generally not irreducible, it has well-defined $p$-parts. Specifically, 
if we set $D_{m}{(a)}:=\left(\ZZ/2m\ZZ,\frac{ax^2}{4m}\right)$
for $a$ coprime to $m$, and 
$L_{m}{(a)}:=\left(\ZZ/m\ZZ,\frac{ax^2}{m}\right)$ for $a$ coprime to $m$ and $m$ odd, then 
we have 
\begin{gather}\label{eqn:wt1:pmr-CDmpparts}
\CC D_m \simeq \CC D_{m_2}(a_2)\otimes\bigotimes_{p\text{ odd prime}} \CC L_{m_p}(a_p) 
\end{gather}
for certain $a_p\in \ZZ$, where 
the factors in (\ref{eqn:wt1:pmr-CDmpparts})  correspond to the factors in 
(\ref{eqn:wt1:pmr-Mpt4mpparts}).
A calculation reveals that 
$a_2$ is an inverse to $\frac{m}{m_2}$ modulo $4m_2$, and $a_p$ an inverse to $\frac{4m}{m_p}$ modulo $m_p$.

Write $\CC D_m(a)^\pm$ for the submodule of $\CC D_m(a)$ spanned by the $e^a\pm e^{-a}$ for $a\in \ZZ/2m\ZZ$.
For $m$ odd write $\CC L_m(a)^\pm$ for the submodule of $\CC L_m(a)$ spanned by the $e^a\pm e^{-a}$ for $a\in \ZZ/m\ZZ$. If $m=p^k$ for $p$ an odd prime and $k\geq 2$ then $\CC L_{p^k}(a)^\pm$ admits a natural embedding by $\CC L_{p^{k-2}}(a)^\pm$. Let $\CC L_{p^k}(a)^{\new,\pm}$ denote the orthogonal complement of the submodule $\CC L_{p^{k-2}}(a)^\pm$ in $\CC L_{p^k}(a)^\pm$. 
Set $\CC L_p(a)^{\new,\pm}:=\CC L_p(a)^\pm$.
\begin{lem}[\!\!\cite{MR2512363}]
For $p$ an odd prime, $k$ a positive integer and $a$ coprime to $p^k$, the $\Mpt(\ZZ)$-modules $\CC L_{p^k}(a)^{\new,\pm}$ are irreducible.
\end{lem}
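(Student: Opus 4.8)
\textbf{Proof strategy for the irreducibility of $\CC L_{p^k}(a)^{\new,\pm}$.}

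The plan is to reduce everything to the representation theory of $\SL_2(\ZZ/p^k\ZZ)$ via the Weil representation attached to the quadratic space $L_{p^k}(a)=(\ZZ/p^k\ZZ,\frac{ax^2}{p^k})$. First I would record that, since $p$ is odd and $a$ is coprime to $p$, the bilinear form $B(x,y)=\frac{2axy}{p^k}$ is nondegenerate, so $\CC L_{p^k}(a)$ is a genuine (non-projective) representation of $\SL_2(\ZZ/p^k\ZZ)$, with $\widetilde T$ acting diagonally by $e(ax^2/p^k)$ on $e^x$ and $\widetilde S$ acting by the finite Fourier transform. The involution $x\mapsto -x$ is the action of $-1\in O_{p^k}$, it commutes with $\SL_2$, and it has order two, so $\CC L_{p^k}(a)=\CC L_{p^k}(a)^+\oplus\CC L_{p^k}(a)^-$ is an $\SL_2(\ZZ/p^k\ZZ)$-stable decomposition; the maps $U_p$ (inflation along $\ZZ/p^{k-2}\ZZ\hookrightarrow\ZZ/p^k\ZZ$, sending $e^y\mapsto\sum_{x\equiv y}e^x$) are module maps, so $\CC L_{p^k}(a)^{\new,\pm}$ is well-defined and $\SL_2(\ZZ/p^k\ZZ)$-stable. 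What must be shown is that each of these two summands-complementary-to-the-old-part is irreducible.

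The cleanest route is a character computation: compute $\langle\chi,\chi\rangle$ over $\SL_2(\ZZ/p^k\ZZ)$ (equivalently over $\SL_2(\ZZ)$, since the representation factors through the finite quotient) for $\chi$ the character of $\CC L_{p^k}(a)^{\new,\pm}$, and verify it equals $1$. Concretely I would (i) compute the character $\psi$ of the full space $\CC L_{p^k}(a)$ using the standard Gauss-sum formulas for Weil representations of cyclic $p$-groups --- for $\gamma$ with lower-left entry $c$, the trace is a quadratic Gauss sum whose value is governed by $\gcd(c,p^k)$ and a Jacobi-symbol/quartic-root-of-unity factor; (ii) split $\psi=\psi^++\psi^-$ using the fixed-point count of the commuting involution, namely $\psi^\pm(\gamma)=\frac12(\psi(\gamma)\pm\psi'(\gamma))$ where $\psi'(\gamma)=\tr(\gamma\circ(x\mapsto -x))$ is again an explicit Gauss-type sum (one computes the trace of the twisted operator); (iii) compute the character of the old part $\CC L_{p^{k-2}}(a)^\pm$ embedded via $U_p$ --- the point being that $U_p$ is an isometry onto its image, so the old-part character is literally $\psi^\pm_{p^{k-2}}$ (the same formulas with $k$ replaced by $k-2$); and (iv) form $\chi^\pm=\psi^\pm_{p^k}-\psi^\pm_{p^{k-2}}$ and evaluate $\frac{1}{|\SL_2(\ZZ/p^k\ZZ)|}\sum_\gamma|\chi^\pm(\gamma)|^2$. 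Grouping the sum by the $p$-adic valuation of $c$ turns this into a short finite sum of products of (absolute values of) Gauss sums times coset-counts, and the telescoping between level $p^k$ and level $p^{k-2}$ is exactly what makes the cross terms cancel and the answer come out to $1$. An alternative, perhaps more conceptual, route is to quote the known classification (due to Nobs--Wolfart and others) of the irreducible constituents of Weil representations of $\SL_2$ over $\ZZ/p^k\ZZ$ for odd $p$: the representation on $\CC L_{p^k}(a)$ decomposes as a multiplicity-free sum indexed by $(j,\epsilon)$ with $0\le j\le k$ a ``conductor'' and $\epsilon=\pm$ a parity, the old part picking out exactly the constituents with $j\le k-2$, so the new-$\pm$ part is a single irreducible with $j\in\{k-1,k\}$ and the given parity. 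I would cite \cite{MR2512363} for the precise bookkeeping and sign conventions, since the statement of the Lemma is attributed there.

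\textbf{The main obstacle.} The genuine difficulty is entirely in the Gauss-sum bookkeeping for $p\mid c$: when the lower-left entry of $\gamma$ is divisible by $p$ the trace formula involves a ramified (partial) Gauss sum over $\ZZ/p^k\ZZ$ whose evaluation depends delicately on $v_p(c)$, on whether the relevant exponent is even or odd, and on a Jacobi symbol of $a$ times a unit; assembling these over all conjugacy classes and checking that $\langle\chi^\pm,\chi^\pm\rangle=1$ (rather than $2$, which would be the outcome if the $\pm$ split failed to be irreducible, or if the new part still contained a residual old piece) requires care. One should also separately dispose of the base case $k=1$, where $\CC L_p(a)^{\new,\pm}=\CC L_p(a)^\pm$ by definition and irreducibility is the classical statement that the even and odd parts of the mod-$p$ Weil representation of $\SL_2(\FF_p)$ are irreducible of dimensions $\frac{p+1}{2}$ and $\frac{p-1}{2}$ --- this anchors the induction and fixes the normalization of $\sigma_A$ and the branch conventions used throughout.
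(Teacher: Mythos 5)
The paper does not actually prove this lemma: it is imported verbatim from Skoruppa \cite{MR2512363}, so there is no in-paper argument to measure your proposal against. Judged on its own terms, your outline is consistent with how the result is established in the literature, and your preparatory reductions are all correct: for $p$ odd and $(a,p)=1$ the form $B(x,y)=2axy/p^k$ is nondegenerate and $\sigma_A^4=1$, so $\CC L_{p^k}(a)$ is a genuine $\SL_2(\ZZ/p^k\ZZ)$-module; the involution $x\mapsto -x$ commutes with the action and yields the $\pm$ splitting; $U_p$ is equivariant, so the new part is a well-defined submodule; and the base case $k=1$ is the classical irreducibility of the two halves, of dimensions $\tfrac12(p\pm1)$, of the mod-$p$ Weil representation. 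Your two suggested routes --- a direct computation that $\langle\chi^\pm,\chi^\pm\rangle=1$ via Gauss sums, or an appeal to the Nobs--Wolfart-type classification of constituents of Weil representations over $\ZZ/p^k\ZZ$ --- are both the standard ways this is done (the first is essentially the method of Skoruppa's thesis, cf.\ Satz 1.8 of \cite{Sko_Thesis}, and the dimensions $\tfrac12 p^{k-2}(p^2-1)$ of the new $\pm$ parts do occur as irreducible degrees at conductor $p^k$).

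Two caveats. First, your proposal is a plan rather than a proof: the decisive step, the evaluation of $\langle\chi^\pm,\chi^\pm\rangle$ over all classes stratified by $v_p(c)$, is exactly where all the content lies, and you defer it; as written the argument would not stand alone without either carrying out that bookkeeping or genuinely citing \cite{MR2512363}, which is what the authors in fact do. Second, a minor point in your step (iii): what you need is only that $U_p$ is injective and equivariant (so the image is isomorphic to $\CC L_{p^{k-2}}(a)^\pm$ and has the same character); whether it is an isometry depends on normalization and is not required for the character identity $\chi^\pm=\psi^\pm_{p^k}-\psi^\pm_{p^{k-2}}$.
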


Let $\Th_m^\pm$ be the submodule of $\Th_m$ spanned by the $\th_{m,r}^\pm$ (cf. \ref{eqn:jac-thmrpm}). Set $L_{p^k}^{\new,\pm}:=L_{p^k}(1)^{\new,\pm}$ for $p$ an odd prime. We will employ the following notation for the characters of the $\Mpt(\ZZ)$-modules we have defined.
\begin{gather}
	\begin{split}\label{eqn:wt1:weil-chrs}
	\vartheta_m(\gamma)&:= \tr(\gamma| \Th_m)\\
	\vartheta_m^\pm(\gamma)&:=\tr(\gamma|\Th_m^\pm)\\
	\nu_m^\a(\gamma)&:=\tr(\gamma| \Th_m^{\new,\a})\\
	\lambda_{p^k}^\pm(\gamma)&:=\tr(\gamma|L_{p^k}^{\new,\pm})
	\end{split}
\end{gather}

To describe the $p$-parts of the $\nu_m^\a$ it is convenient to abuse notation and evaluate $\a$ on primes. 
Specifically, if $m_p$ denotes the highest power of $p$ dividing $m$, then there is a unique $a\xmod 2m$ such that $a=-1\xmod 2m_p$ and $a=1\xmod \frac{2m}{m_p}$, and we write $\a(p)$ instead of $\a(a)$. With this convention\footnote{Note the difference with \cite{omjt}, wherein characters $\a\in \hom(O_m,\CC^\times)$ are evaluated on exact divisors of $m$.} it follows from (\ref{eqn:wt1:pmr-CDmpparts}) that the $2$-part of $\nu_m^\a$ takes the form $\sigma(\vartheta_{m_2}^{\pm})$ where $\a(2)=\pm 1$ and $\sigma$ is the Galois automorphism of $\QQ(e(\tfrac{1}{4m_2}))$ mapping $e(\tfrac{1}{4m_2})$ to $e(\tfrac{a_2}{4m_2})$, for 
$a_2$ 
as in (\ref{eqn:wt1:pmr-CDmpparts}).
For $p$ odd the $p$-part of $\nu_m^\a$ is $\sigma(\lambda_{m_p}^\pm)$ where $\a(p)=\pm 1$, and now $\sigma$ maps $e(\tfrac{1}{m_p})$ to $e(\tfrac{a_p}{m_p})$ where 
$a_p$ is as in (\ref{eqn:wt1:pmr-CDmpparts}).

\subsection{Proof of the Main Result}\label{sec:wt1:pmr}

We present the proof of Theorem \ref{thm:wt1-main} in this section.
Our main technical tool is Lemma \ref{lem:wt1-mnot0mod8nnot0mod32mdiv1mod4ndivnocb}, which we establish by applying a specialization (Proposition \ref{prop:wt1-precise}) of Theorem 8 in \cite{MR2512363}.

Note that if $m$ and $m'$ are both positive divisors of some integer $M$ then the product $\vartheta_m\vartheta_{m'}$ (cf. (\ref{eqn:wt1:weil-chrs})) factors through 
$\SL_2(\ZZ/4M\ZZ)$. 
It will be useful to know when an irreducible constituent of $\vartheta_m\vartheta_{m'}$ can factor through $\SL_2(\ZZ/4M\ZZ)/\{\pm I\}$. Our first result in this section is an answer to this question. 
\begin{lem}\label{lem:wt1:pmr-Sevs}
Let $m$ and $m'$ be positive integers, and suppose that $\sigma$ and $\sigma'$ are Galois automorphisms of $\QQ(e(\tfrac{1}{4m}))$ and $\QQ(e(\tfrac{1}{4m'}))$ respectively. If $\sigma(i)=\sigma'(i)$ then 
the character $\sigma(\vartheta_m^-)\sigma'(\vartheta_{m'}^+)$ factors through $\SL_2(\ZZ)/\{\pm I\}$, but $-I$ acts non-trivially on every irreducible constituent of $\sigma(\vartheta_m^-)\sigma'(\vartheta_{m'}^-)$ and $\sigma(\vartheta_m^+)\sigma'(\vartheta_{m'}^+)$. If $\sigma(i)=-\sigma'(i)$ then 
the characters $\sigma(\vartheta_m^-)\sigma'(\vartheta_{m'}^-)$ and $\sigma(\vartheta_m^+)\sigma'(\vartheta_{m'}^+)$ factor through $\SL_2(\ZZ)/\{\pm I\}$, but $-I$ acts non-trivially on every irreducible constituent of $\sigma(\vartheta_m^-)\sigma'(\vartheta_{m'}^+)$. 
\end{lem}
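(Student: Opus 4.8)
The plan is to reduce the whole statement to tracking how one explicit lift of $-I$ in the metaplectic group acts on the modules involved. Since $-I=S^2$ in $\SL_2(\ZZ)$, the element $\widetilde S^2\in\Mpt(\ZZ)$ is a lift of $-I$. Using the multiplication rule of \S\ref{sec:wt1:weil} and the principal branch convention there, one computes $\widetilde S^2=\bigl(S^2,(-1/\tau)^{1/2}\tau^{1/2}\bigr)=(-I,i)$, where $i$ denotes the constant function (the product $(-1/\tau)^{1/2}\tau^{1/2}$ is identically $i$ on $\HH$), so that $\widetilde S^4=Z:=(I,-1)$ is the generator of the kernel of $\Mpt(\ZZ)\to\SL_2(\ZZ)$. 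Consequently, if a module affords $\widetilde S^2$ as a scalar $c$, then $Z$ acts as $c^2$: the associated character factors through $\SL_2(\ZZ)$ whenever $c^2=1$, and factors further through $\SL_2(\ZZ)/\{\pm I\}$ whenever $c=1$; whereas if $c=-1$ then $-I$ acts as the central scalar $-1$, hence non-trivially on every irreducible constituent by Schur's lemma.

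Next I would compute the action of $\widetilde S^2$ on $\Th_m$ directly from (\ref{eqn:wt1-Mp2action}): for $\gamma=-I$ one has $\gamma\tau=\tau$, $c\tau+d=-1$ and $\upsilon(\tau)^{-1}=-i$, so $\theta_{m,r}|_{\frac12,m}\widetilde S^2=-i\,\theta_{m,-r}$. (Equivalently this follows from (\ref{eqn:wt1:weil-weilrep}) via $\widetilde S^2e^a=\sigma_{D_m}^{2}e^{-a}$ together with $\sigma_{D_m}=e(-\tfrac18)$.) Passing to the eigenspaces (\ref{eqn:jac-thmrpm}) and writing $\varepsilon\in\{\pm1\}$ for the superscript, $\widetilde S^2$ acts on $\Th_m^{\varepsilon}$ by the scalar $-\varepsilon i$. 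Since a Galois automorphism conjugates matrix entries, $\widetilde S^2$ acts on $\sigma(\Th_m^{\varepsilon})$ by $-\varepsilon\,\sigma(i)$, and therefore on $\sigma(\Th_m^{\varepsilon})\otimes\sigma'(\Th_{m'}^{\varepsilon'})$ by the scalar $c=\varepsilon\varepsilon'\,\sigma(i)\,\sigma'(i)$.

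To conclude I would do the bookkeeping. Because $\sigma(i),\sigma'(i)\in\{\pm i\}$, we have $\sigma(i)\sigma'(i)=-1$ if $\sigma(i)=\sigma'(i)$ and $\sigma(i)\sigma'(i)=+1$ if $\sigma(i)=-\sigma'(i)$; in either case $c\in\{\pm1\}$, so every product under consideration factors through $\SL_2(\ZZ)$ (consistent with the remark preceding the lemma), it factors through $\SL_2(\ZZ)/\{\pm I\}$ exactly when $c=1$, and for $c=-1$ the element $-I$ acts non-trivially on every irreducible constituent. Running $(\varepsilon,\varepsilon')$ over $(-,+)$, $(-,-)$, $(+,+)$ (the pattern $(+,-)$ being symmetric to $(-,+)$ under interchange of the two factors) in each of the two cases $\sigma(i)=\pm\sigma'(i)$ gives $c=\varepsilon\varepsilon'\sigma(i)\sigma'(i)$ equal to exactly the sign demanded by each of the six assertions of the lemma; I would display this as a small $3\times 2$ sign table.

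I do not expect a genuine obstacle. The only delicate point is fixing the branch of the square root in $\widetilde S^2=(-I,i)$: an error there would interchange the roles of $i$ and $-i$ throughout, although, since it would do so for $\sigma$ and $\sigma'$ alike, it would not in fact change which products descend to $\SL_2(\ZZ)/\{\pm I\}$. It is also worth being careful to invoke (rather than re-prove) the fact from the text that the modules in question already factor through a principal congruence quotient, so that the phrase "factors through $\SL_2(\ZZ)/\{\pm I\}$" carries the expected finite-level content.
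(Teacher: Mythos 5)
Your proof is correct, and it arrives at the conclusion by a more direct route than the paper. The paper's proof evaluates the character values $\vartheta_m^\pm(\widetilde S^k)$ for all $k$ modulo $4$ using quadratic Gauss sums (equation (\ref{eqn:wt1:weil-Straces})) and then applies the eigenspace-projection formula (\ref{eqn:wt1:weil-Ses}) to show that the $\pm i$ eigenspaces of $\widetilde S$ on $\Th_m^-\otimes\Th_{m'}^+$ vanish, and the $\pm 1$ eigenspaces on $\Th_m^\mp\otimes\Th_{m'}^\mp$ vanish; this pins down the scalar by which $\widetilde S^2$ acts only indirectly, via the eigenvalue distribution of $\widetilde S$ itself. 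You instead observe at the outset that $\widetilde S^2=(-I,i)$ acts on $\Th_m^{\varepsilon}$ as the explicit scalar $-\varepsilon i$ (needing only $\sigma_{D_m}^2=-i$ and the flip $e^a\mapsto e^{-a}$, with no Gauss sums), so that the scalar on $\sigma(\Th_m^{\varepsilon})\otimes\sigma'(\Th_{m'}^{\varepsilon'})$ is $\varepsilon\varepsilon'\sigma(i)\sigma'(i)$ and all six assertions follow from a sign table. What your version buys is brevity and the elimination of (\ref{eqn:wt1:weil-Straces}) entirely; what the paper's version buys is the full eigenvalue distribution of $\widetilde S$ on these tensor products, information the lemma does not actually use. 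Your handling of the two genuinely delicate points --- the branch of the square root in $\widetilde S^2$ (and the observation that an error there would cancel between the two factors) and the effect of the Galois twist on the scalar --- is also sound.
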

\begin{proof}
The dimension of the $i^a$ eigenspace for the action of $S$ on $\Th_m^\pm\otimes \Th_{m'}^\pm$ (for any choice of signs) is 
\begin{gather}\label{eqn:wt1:weil-Ses}
\frac14\sum_{k=0}^{3} (-i)^{ak}
\vartheta_m^\pm(\widetilde{S}^k)\vartheta_{m'}^\pm(\widetilde{S}^k).
\end{gather}
Set $\zeta:=e(\tfrac18)$. From the definition (\ref{eqn:wt1:weil-weilrep}) we compute using identities for quadratic Gauss sums that 
\begin{gather}\label{eqn:wt1:weil-Straces}
\vartheta_m^\pm(\widetilde{S}^k)=
\begin{cases}
	(m\pm 1)(-i)^k\zeta^{\pm k} 
	&\text{ if $k=0\xmod 2$,}\\
	(-i)^k\zeta^{\pm k} 
	&\text{ if $k=1\xmod 2$ and $m=0\xmod 2$,}\\
	0&\text{ if $k=1\xmod 2$ and $m=1\xmod 2$.}
\end{cases}
\end{gather}
Applying (\ref{eqn:wt1:weil-Straces}) to (\ref{eqn:wt1:weil-Ses}) we find that the $\pm i$ eigenspaces for the action of $S$ on $\Th_m^-\otimes \Th_{m'}^+$ are trivial, as are the $\pm 1$ eigenspaces for the actions of $S$ on $\Th_m^-\otimes \Th_{m'}^-$ and $\Th_m^+\otimes \Th_{m'}^+$. This proves the claimed result for $\sigma$ and $\sigma'$ both trivial. The general case is very similar.
\end{proof}

The next result is a slight refinement of a specialization of Theorem 8 in \cite{MR2512363}. 
\begin{prop}[\!\!\cite{MR2512363}]\label{prop:wt1-precise}
Let $m$ and $N$ be positive integers, and let 
$M$ be a 
positive integer such that $m|M$ and $N$ divides $4M$. 
Then we have
\begin{gather}\label{eqn:wt1-precise}
	\dim J_{1,m}(N)
	=
	\sum_{\substack{m'|M\\ 
	\frac{M}{m'}\text{ square-free}}}
	\langle \vartheta_{m}^{-}\vartheta^+_{m'},\hat1_N \rangle
\end{gather}
where $\hat1_N$ is the character of $\SL_2(\ZZ)$ obtained via induction from the trivial character of $\Gamma_0(N)$.
\end{prop}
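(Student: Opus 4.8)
The plan is to reduce the dimension count for $J_{1,m}(N)$ to a statement about multiplicities of the trivial representation in tensor products of Weil representations, via the theta-decomposition. First I would note that a holomorphic Jacobi form $\xi$ of weight $1$, index $m$ and level $N$ is determined by its theta-coefficients $(h_r)$, which form a vector-valued modular form of weight $\frac12$ on $\Gamma_0(N)$ transforming under the dual of the Weil representation $\Th_m$. Equivalently, $\xi$ is a $\Gamma_0(N)$-invariant vector in the $\SL_2(\ZZ)$-module (of half-integral weight $\frac12$ modular forms) obtained by inducing $M_{\frac12}(\Gamma_0(N)\cap\Gamma(4m))$ up and tensoring with $\Th_m$. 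Because $\Gamma(4m)^*$ acts trivially on the $\theta_{m,r}$, all the relevant data descends to $\Mpt(\ZZ/4M\ZZ)$ for any $M$ with $m\mid M$ and $N\mid 4M$, and by Frobenius reciprocity the dimension of the space of $\Gamma_0(N)$-invariants equals the inner product of the relevant character with $\hat 1_N$, the character induced from the trivial character of $\Gamma_0(N)$.

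The key input is the structure of $M_{\frac12}(\Gamma(4M))$ as an $\Mpt(\ZZ)$-module. Here I would invoke Satz 5.2 of \cite{Sko_Thesis} (already quoted in the proof of Proposition \ref{prop:wt1-exps}), which gives
\begin{gather}
M_{\frac12}(\Gamma(4M))=\bigoplus_{M'|M}\bigoplus_{\substack{\a'\in \widehat{O}_{M'}\\\a'(-1)=1}}\Th_{M'}^{\new,\a'}|U_0 .
\end{gather}
Combining this with the decomposition $(\ref{eqn:wt1-thmpmdec})$ of $\Th_{M'}^+$ into new pieces and collecting over all $M'\mid M$ with $M/M'$ square-free, the span of the $\theta_{M',s}(\tau,0)$ for $M'\mid M$ reorganizes into $\bigoplus_{m'\mid M,\ M/m'\text{ sq-free}}\Th_{m'}^+$ as an $\Mpt(\ZZ)$-module — the square-free condition arising precisely because $U_d$ for $d^2\mid m'$ would produce a smaller index already counted, so one keeps only the "new at each squarefree level" contributions. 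Then a weight $1$ Jacobi form of index $m$ corresponds to an element of $\Th_m^-\otimes\big(\bigoplus_{m'}\Th_{m'}^+\big)$ fixed by $\Gamma_0(N)$ — the $-$ sign on the $\Th_m$ factor coming from $h_r=-h_{-r}$, which holds because $\xi$ has odd weight — and Frobenius reciprocity turns the count of such fixed vectors into $\sum_{m'}\langle \vartheta_m^-\vartheta_{m'}^+,\hat 1_N\rangle$, giving $(\ref{eqn:wt1-precise})$.

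I expect the main obstacle to be the bookkeeping that converts the double sum over $(M',\a')$ with $\a'(-1)=1$ in Satz 5.2 into the single clean sum over squarefree $M/m'$ with the plain character $\vartheta_{m'}^+$: one must check that summing $\Th_{M'}^{\new,\a'}|U_0$ over all $M'\mid M$ and all even characters $\a'$ really does reassemble, with the correct multiplicities and no overcounting, to $\bigoplus_{m'}\Th_{m'}^+$ restricted to the squarefree-cofactor indices, using $(\ref{eqn:wt1-thmpmdec})$ to re-expand each $\Th_{m'}^+$ and matching terms. A secondary point requiring care is the passage between the half-integral weight setup (where one works over $\Mpt(\ZZ)$ and must track that $U_0$ lands values in weight $\frac12$ rather than weight $1$) and the statement, phrased in terms of $\SL_2(\ZZ)$-characters $\hat 1_N$ and $\vartheta_m^\pm$ — one needs $\vartheta_m^-\vartheta_{m'}^+$ to genuinely factor through $\SL_2(\ZZ)$, which is exactly the content of Lemma \ref{lem:wt1:pmr-Sevs} applied with $\sigma=\sigma'$ trivial (so $\sigma(i)=\sigma'(i)$). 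The remaining steps — Frobenius reciprocity and the identification of the theta-coefficient space with the induced module — are routine, as is the independence of the right-hand side of $(\ref{eqn:wt1-precise})$ from the auxiliary choice of $M$, which follows because enlarging $M$ only adds index-$m'$ pieces with $M/m'$ not squarefree, contributing nothing new.
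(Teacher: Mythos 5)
Your outline is essentially correct, but it takes a genuinely different route from the paper. The paper treats Proposition \ref{prop:wt1-precise} as a specialization of Theorem 8 of \cite{MR2512363}: it plugs in $F=(m)$, $\Gamma=\Gamma_0(N)$ and the trivial $\Gamma$-module $V$, reads off that $J_{1,m}(N)$ is the space of $\Mpt(\ZZ)$-invariants in $\bigoplus_{m'}\CC D_{m'}(-1)^+\otimes\CC D_m(-1)\otimes\hat V$, converts this to $\sum_{m'}\langle\vartheta_m\vartheta_{m'}^+,\hat1_N\rangle$ by Frobenius reciprocity, and only then invokes Lemma \ref{lem:wt1:pmr-Sevs} to discard the $\vartheta_m^+\vartheta_{m'}^+$ contribution (the ``slight refinement''). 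You instead re-derive the content of Theorem 8 in this special case from Satz 5.2 of \cite{Sko_Thesis} (the same input used for Proposition \ref{prop:wt1-exps}), identifying $J_{1,m}(N)$ with the $\Gamma_0(N)$-invariants of $M_{\frac12}(\Gamma(4M))\otimes\Th_m^-$ and obtaining the minus sign directly from $h_{-r}=-h_r$, which is cleaner than the paper's a posteriori removal of the plus part. Your approach is more self-contained but leaves two points only flagged rather than checked: (i) the reassembly of $\bigoplus_{M'\mid M}\bigoplus_{\a'(-1)=1}\Th_{M'}^{\new,\a'}|U_0$ into $\bigoplus_{m'}\Th_{m'}^+$ over $m'\mid M$ with $M/m'$ square-free does work, because each $M'\mid M$ occurs for exactly one pair $(m',d)$ with $m'=M'd^2$ and $M/m'$ square-free, via the unique factorization $M/M'=d^2s$ with $s$ square-free, combined with (\ref{eqn:wt1-thmpmdec}) and the fact that $U_0$ is injective on $\Th_{M'}^+$ (it kills $\Th_{M'}^-$); and (ii) the identification of $J_{1,m}(N)$ with the invariant subspace requires the injectivity of $\sum_r h_r\otimes\theta_{m,r}\mapsto\sum_r h_r\theta_{m,r}$, which holds since the $\theta_{m,r}$ are linearly independent over functions of $\tau$. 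With those two verifications supplied, your argument is a complete proof; what the paper's citation-based proof buys is brevity, while yours makes transparent exactly which structural facts about $M_{\frac12}$ the dimension formula rests on.
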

The hypothesis on $M$ ensures that $\hat1_N$ in (\ref{eqn:wt1-precise}) factors through $\SL_2(\ZZ/4M\ZZ)$. So we may regard the inner product $\langle\cdot\,,\cdot\rangle$ in (\ref{eqn:wt1-precise}) as the usual scalar product on class functions for $\SL_2(\ZZ/4M\ZZ)$.
\begin{proof}[Proof of Proposition \ref{prop:wt1-precise}]
In the notation of \cite{MR2512363} we take $F$ to be the $1\times 1$ matrix $(m)$, we set $\Gamma=\Gamma_0(N)$, and let $V$ be the trivial representation of $\Gamma$.
Applying Theorem 8 with these choices  we obtain that $J_{1,m}(N)$ is isomorphic to the space of $\Mpt(\ZZ)$-invariant vectors in
\begin{gather}\label{eqn:wt1-precise-proof}
	\bigoplus_{\substack{m'|M\\\frac{M}{m'} \text{ square-free}}}	\CC D_{m'}(-1)^+\otimes \CC D_m(-1)\otimes \hat V
\end{gather}
(the $l$ and $m$ in loc. cit. are $m'$ and $M$ here, respectively), where $\CC D_{m'}(-1)^+$ denotes the subspace of $\CC D_{m'}(-1)$ spanned by vectors $e^a+e^{-a}$ for $a\in \ZZ/2m'\ZZ$, and $\hat V$ is the $\SL_2(\ZZ)$-module obtained via induction from $V$. By our hypotheses, all the modules in (\ref{eqn:wt1-precise-proof}) factor through $\Mpt(\ZZ/4M\ZZ)$. For this group, the character of $\CC D_{m'}(-1)^+$ is $\overline{\vartheta_{m'}^+}$, and the character of $\CC D_m(-1)$ is $\overline{\vartheta_m}$. So if we write $\hat1_N$ for the character of $\hat V$, and write $1$ for the trivial character of $\Mpt(\ZZ/4M\ZZ)$, then the space of $\Mpt(\ZZ)$-invariants in the summand $\CC D_{m'}(-1)^+\otimes \CC D_m(-1)\otimes \hat V$ is the scalar product 
$\langle 1, \overline{\vartheta_{m'}^+\vartheta_{m}^{}}\hat1_N\rangle=\langle \vartheta_{m}^{}\vartheta_{m'}^+,\hat 1_N\rangle$
in the ring of class functions on $SL_2(\ZZ/4M\ZZ)$. 
Now $\hat 1_N$ factors through $\SL_2(\ZZ/4M\ZZ)/\{\pm I\}$ by construction, whereas $-I$ is non-trivial on every irreducible constituent of $\vartheta_m^+\vartheta_{m'}^+$ by Lemma \ref{lem:wt1:pmr-Sevs}. So $\langle \vartheta_{m}^{+}\vartheta_{m'}^+,\hat 1_N\rangle=0$ and $\langle \vartheta_{m}^{}\vartheta_{m'}^+,\hat 1_N\rangle=\langle \vartheta_{m}^{-}\vartheta_{m'}^+,\hat 1_N\rangle$. The claimed formula follows.
\end{proof}

Before presenting our main 
application of Proposition \ref{prop:wt1-precise}
we require some results about characters of $\Mpt(\ZZ/64\ZZ)$, and $\SL_2(\ZZ/p^2\ZZ)$ for $p$ an odd prime. The next lemma was obtained by working directly with the character table of $\Mpt(\ZZ/64\ZZ)$, which we constructed on a computer using GAP \cite{GAP4}. 
\begin{lem}\label{lem:wt1:pmr-sl264}
Suppose that $k,k'\in\{1,2,4,8,16\}$, and $\sigma$ and $\sigma'$ are Galois automorphisms of $\QQ(e(\tfrac{1}{64}))$ such that $\sigma(i)=\sigma(i')$. Then $\langle \sigma(\vartheta_k^-)\sigma'(\vartheta_{k'}^+),\hat1_{16}\rangle=0$. If $(k,k')$ is not $(8,2)$ or $(16,16)$ then $\langle \sigma(\vartheta_k^-)\sigma'(\vartheta_{k'}^+),\hat1_{32}\rangle=0$.
\end{lem}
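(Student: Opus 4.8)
The plan is to recognize the statement as a finite character computation and to carry it out, after a reduction that halves the number of Galois pairs and, more importantly, dispenses with the metaplectic bookkeeping. First I would note that all the characters involved factor through the finite group $G:=\Mpt(\ZZ/64\ZZ)$. Indeed $\vartheta_k^\pm$ is the character of $\Th_k^\pm$, which is trivial on $\Gamma(4k)^*$ (as recalled in \S\ref{sec:wt1:weil}), hence factors through $\Mpt(\ZZ/4k\ZZ)$; since $4k\mid64$ for $k\in\{1,2,4,8,16\}$, pulling back along $G\twoheadrightarrow\Mpt(\ZZ/4k\ZZ)$ makes $\vartheta_k^\pm$ a character of $G$. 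Likewise $\hat1_N$, induced from the trivial character of $\Gamma_0(N)$ with $N\in\{16,32\}$ dividing $64$, factors through $\SL_2(\ZZ/64\ZZ)$, and even through $\SL_2(\ZZ/64\ZZ)/\{\pm I\}$ since $-I\in\Gamma_0(N)$. So each bracket $\langle\sigma(\vartheta_k^-)\sigma'(\vartheta_{k'}^+),\hat1_N\rangle$ is an honest Hermitian scalar product of class functions on $G$.

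Next I would trim the Galois input. The characters $\vartheta_k^\pm$ with $k\mid16$ take values in $\QQ(e(\tfrac1{64}))$, while each bracket is a rational integer; applying the field automorphism $(\sigma')^{-1}$ to the bracket (which fixes it) and using that $\hat1_N$ is rational-valued gives
\begin{gather*}
	\langle\sigma(\vartheta_k^-)\sigma'(\vartheta_{k'}^+),\hat1_N\rangle
	=\langle\tau(\vartheta_k^-)\,\vartheta_{k'}^+,\hat1_N\rangle,
	\qquad\tau:=(\sigma')^{-1}\sigma .
\end{gather*}
The hypothesis $\sigma(i)=\sigma'(i)$ becomes $\tau(i)=i$, i.e.\ $\tau$ corresponds to a residue $\equiv1\pmod4$, so it suffices to treat $\sigma'=\mathrm{id}$ and $\sigma$ ranging over the index-two subgroup of $\Gal(\QQ(e(\tfrac1{64}))/\QQ)$ that fixes $i$. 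By Lemma \ref{lem:wt1:pmr-Sevs} the product $\sigma(\vartheta_k^-)\vartheta_{k'}^+$ then factors through $\SL_2(\ZZ/64\ZZ)/\{\pm I\}$; combined with the corresponding property of $\hat1_N$, this lets me run the entire computation inside the genuine finite group $\PSL_2(\ZZ/64\ZZ)$, never tracking a metaplectic cocycle.

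The computation itself I would perform with GAP \cite{GAP4}: construct $\PSL_2(\ZZ/64\ZZ)$ together with its character table; realize each $\Th_k^\pm$, $k\in\{1,2,4,8,16\}$, explicitly from the Weil-representation formulas (\ref{eqn:wt1:weil-weilrep}) for $\widetilde T$ and $\widetilde S$ reduced modulo $\pm I$, and read off the class functions $\vartheta_k^\pm$; form $\hat1_N$ by inducing the trivial character from the image of $\Gamma_0(N)$ for $N\in\{16,32\}$; apply each $\sigma$ fixing $i$ to $\vartheta_k^-$; and tabulate $\langle\sigma(\vartheta_k^-)\vartheta_{k'}^+,\hat1_N\rangle$ over the $25$ pairs $(k,k')$. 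I expect the output to be: every value vanishes for $N=16$, and for $N=32$ the only nonzero values occur at $(k,k')\in\{(8,2),(16,16)\}$.

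The main obstacle is not conceptual but organizational, namely putting the explicit Weil-representation models of the $\Th_k^\pm$, the induced character $\hat1_N$, and the abstract group $G$ on a common footing — in particular pinning down which lift of $\Gamma_0(N)$ inside $\Mpt(\ZZ/64\ZZ)$ one should induct from. The reduction above is designed exactly to bypass this: once Lemma \ref{lem:wt1:pmr-Sevs} guarantees that the relevant twisted products descend to $\PSL_2(\ZZ/64\ZZ)$, all the data live on an ordinary finite group of order on the order of $10^5$, and the remaining linear algebra over $\QQ(e(\tfrac1{64}))$, while sizeable, is routine.
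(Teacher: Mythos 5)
Your proposal is correct and is essentially the paper's argument: the lemma is established there by a direct computation with the character table of $\Mpt(\ZZ/64\ZZ)$ constructed in GAP, which is exactly the finite character computation you describe. Your preliminary reductions (normalizing the Galois pair to a single $\tau$ fixing $i$, and using Lemma \ref{lem:wt1:pmr-Sevs} to descend the product characters to $\SL_2(\ZZ/64\ZZ)/\{\pm I\}$) are valid conveniences that the paper simply skips by working with the metaplectic group directly.
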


\begin{lem}\label{lem:wt1:pmr-sl2p2}
If $p$ is an odd prime then $\hat1_{p^2}$ has four irreducible constituents. One of these is the trivial character, another has degree $p$, and the remaining two have degree $\frac12(p^2-1)$.
\end{lem}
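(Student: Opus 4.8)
The plan is to decompose the induced character $\hat1_{p^2}$ using standard facts about the principal series representations of $\SL_2$ over the finite local ring $\ZZ/p^2\ZZ$. First I would observe that $\hat1_{p^2} = \operatorname{Ind}_{\Gamma_0(p^2)}^{\SL_2(\ZZ/p^2\ZZ)} 1$, and that by Frobenius reciprocity, for any irreducible character $\chi$ of $G := \SL_2(\ZZ/p^2\ZZ)$, the multiplicity $\langle \hat1_{p^2}, \chi\rangle$ equals the dimension of the space of $\Gamma_0(p^2)$-fixed vectors in the representation affording $\chi$, i.e.\ $\langle \hat1_{p^2},\chi\rangle = \langle \operatorname{Res}_{\Gamma_0(p^2)}\chi, 1\rangle$. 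Equivalently, I can compute $\langle \hat1_{p^2},\hat1_{p^2}\rangle$, which by Mackey's formula is the number of $\Gamma_0(p^2)$-double cosets in $G$, i.e.\ $|\Gamma_0(p^2)\backslash G / \Gamma_0(p^2)|$; this counts the number of irreducible constituents when each appears with multiplicity one, and more generally gives $\sum_i m_i^2$ where $m_i$ are the multiplicities.

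The key computation is therefore the structure of the double coset space. Since $\Gamma_0(p^2)$ is the stabilizer of the line spanned by $(1,0)^T$ for the natural action of $G$ on $(\ZZ/p^2\ZZ)^2$ — more precisely the stabilizer of a point in $\PP^1(\ZZ/p^2\ZZ)$ — the coset space $\Gamma_0(p^2)\backslash G$ is identified with $\PP^1(\ZZ/p^2\ZZ)$, and the double cosets $\Gamma_0(p^2)\backslash G/\Gamma_0(p^2)$ are the orbits of $\Gamma_0(p^2)$ on $\PP^1(\ZZ/p^2\ZZ)$. I would enumerate these orbits directly: $\PP^1(\ZZ/p^2\ZZ)$ has $p^2 + p$ elements (points $[x:1]$ for $x \in \ZZ/p^2\ZZ$, plus $[1:py]$ for $y\in\ZZ/p\ZZ$), and the Borel-type group $\Gamma_0(p^2)$ of lower-triangular-mod-$p^2$ matrices acts with orbits determined by the $p$-adic valuation and related invariants. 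A careful count should yield exactly four orbits, corresponding to the point fixed by $\Gamma_0(p^2)$ (the trivial constituent), and three further orbits of sizes $1$, $p$-ish, and $\frac12(p^2-1)$-ish families; checking that each constituent occurs with multiplicity one then reduces to verifying $\sum m_i^2 = 4$, i.e.\ that there are exactly four double cosets.

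For the degrees, I would then match the four constituents against the known character table of $\SL_2(\ZZ/p^2\ZZ)$, which is built from the Clifford theory relative to the congruence kernel $K = \ker(\SL_2(\ZZ/p^2\ZZ)\to\SL_2(\ZZ/p\ZZ))$, an abelian $p$-group isomorphic to $\gt{sl}_2(\FF_p)$. The constituents of $\hat1_{p^2}$ that are trivial on $K$ are exactly the constituents of $\hat1_p$ for $\SL_2(\FF_p)$ — and by the genus-zero fact (or a direct $p=1$ count), $\hat1_p$ has exactly two constituents, the trivial one and the Steinberg of degree $p$. The remaining two constituents are non-trivial on $K$; these arise from characters of $K$ with "regular semisimple" stabilizer and have degree equal to $[\SL_2(\FF_p):T]$ for a maximal torus $T$ (split or non-split), namely $p(p-1)$ or $p(p+1)$ — but I must be careful, since restricting to $\Gamma_0(p^2)$-fixed vectors and the specific orbit structure will force the precise degree; the expected answer $\frac12(p^2-1)$ suggests these are "half" of a principal series in a way governed by an order-two stabilizer, so I would pin down the stabilizer of the relevant $K$-character inside $\Gamma_0(p)/\Gamma_0(p^2)$ and conclude the degree via Clifford theory. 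The main obstacle will be this last step: getting the degrees of the two "new" constituents exactly right, because it requires tracking the Clifford-theoretic induction data carefully rather than just counting orbits, and the appearance of the factor $\frac12$ means a subtle stabilizer computation cannot be skipped. A clean alternative for the degrees is to use the orbit sizes directly: $\dim(\text{constituent}) \geq |\text{corresponding double coset}|$ with equality in the multiplicity-one situation is false in general, so instead I would use that the sum of degrees of constituents appearing in $\hat1_{p^2}$, weighted by multiplicity $1$, must be compatible with $\dim \hat1_{p^2} = [G:\Gamma_0(p^2)] = p^2+p$ only as a lower bound — so the degrees genuinely must come from the character table, and I would simply cite the classical description (e.g.\ of Nobs--Wolfart or Kloosterman) of $\SL_2(\ZZ/p^2\ZZ)$ to read off that the four constituents have degrees $1$, $p$, $\frac12(p^2-1)$, $\frac12(p^2-1)$.
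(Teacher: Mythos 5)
Your route is genuinely different from the paper's and is viable in outline, but two of its steps are left as promissory notes, and one of them hides the real content of the lemma. The orbit count is fine once you do it: identifying $\Gamma_0(p^2)\backslash G$ with $\PP^1(\ZZ/p^2\ZZ)$ and letting $\Gamma_0(p^2)$ act, one finds orbits of sizes $1$, $p^2$, $\tfrac12(p-1)$, $\tfrac12(p-1)$ --- the $p-1$ points $[1:py]$ with $y\neq 0$ split into two orbits because the stabilizer of $[1:0]$ moves $y$ only by $y\mapsto a^{-2}y$ mod $p$, i.e.\ by squares --- so there are indeed four double cosets, and since the trivial character occurs exactly once, $\sum m_i^2=4$ forces four constituents each of multiplicity one. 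You should actually carry out this count (it is where the factor $\tfrac12$ first enters) rather than assert that it ``should'' give four.

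The genuine gap is the degree computation. Knowing there are four multiplicity-one constituents, two of which (those trivial on the congruence kernel $K$) are the trivial character and the degree-$p$ Steinberg, only tells you the remaining two degrees sum to $p^2-1$; something more is needed to split this as $\tfrac12(p^2-1)+\tfrac12(p^2-1)$. Moreover your Clifford-theoretic sketch points at the wrong characters of $K\cong\gt{sl}_2(\FF_p)$: the characters of $K$ occurring in $\CC[G/B]$ are those trivial on the stabilizer of a point of $\PP^1(\ZZ/p^2\ZZ)$, i.e.\ (via the trace form) the \emph{nilpotent} characters, not the regular semisimple ones, so the degrees $p(p\pm1)$ you mention never arise here; the nonzero nilpotents form two $\SL_2(\FF_p)$-orbits of size $\tfrac12(p^2-1)$ each (centralizer of order $2p$), and that is where the two equal degrees come from. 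The paper sidesteps all of this by exhibiting the two constituents explicitly: it forms the exponential-sum vectors $\sum_k e(-ak/p)\,u_{x+kp}$ labelled by $a\in(\ZZ/p\ZZ)^\times$, computes that $S$ sends the label $a$ to $ax^2$, and splits according to whether $a$ is a quadratic residue, producing two visibly $G$-stable subspaces of dimension $\tfrac12(p^2-1)$ whose irreducibility then follows from the bound of four on the number of constituents. If you prefer your route, the honest options are to carry out the nilpotent-orbit Clifford computation in full or to cite the character table of $\SL_2(\ZZ/p^2\ZZ)$ outright; as written, the key degree statement is not proved.
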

\begin{proof}
Let $p$ be an odd prime. Set $G=\SL_2(\ZZ/p^2\ZZ)/\{\pm I\}$ and $\bar G=\SL_2(\ZZ/p\ZZ)/\{\pm I\}$, let $B$ be the image of $\Gamma_0(p^2)$ in $G$, 
and let $\bar B$ be the image of $\Gamma_0(p)$ in $\bar G$. Then $\hat1_{p^2}$ is the character of the permutation module $\CC[G/B]:=\CC G\otimes_{\CC B}\CC$, where $\CC$ on the right is the trivial module for $B$, and $\hat1_p$ is the character of $\CC[\bar{G}/\bar{B}]$. Let $W$ be the kernel of the natural map $\CC[G/B]\to \CC[\bar{G}/\bar{B}]$. 
Then $\hat1_{p^2}-\hat1_p$ is the character of $W$. The space $\CC[\bar{G}/\bar{B}]$ realizes the permutation module for the action of $\PSL_2(\ZZ/p\ZZ)$ on the points of the projective line over $\ZZ/p\ZZ$, and is well known to have two irreducible constituents, one being trivial. 
So $\hat1_p$ is the sum of the trivial character and an irreducible character of degree $p$. 

So we require to find the degrees of the irreducible constituents of $W$. 
We may construct these constituents explicitly. To do this define cosets $u_x:=\left(\begin{smallmatrix} x&-1\\1&0\end{smallmatrix}\right)B$ and $v_y:=\left(\begin{smallmatrix} 1&0\\y&1\end{smallmatrix}\right)B$ for $x,y\in \ZZ/p^2\ZZ$. Then the set $\{u_x,v_y\mid y=0\xmod p\}$ is a basis for $\CC[G/B]$. For $x\in \ZZ/p^2\ZZ$ and $a\in \ZZ/p\ZZ$ define $1$-dimensional vector spaces $U_x^a$ and $U_\infty^a$ in $\CC[G/B]$ by setting
\begin{gather}
U_x^a
:=\Span\left\{\sum_{k\xmod p} e(-\tfrac{ak}{p})u_{x+kp}\right\},\quad
U_\infty^a
:=\Span\left\{\sum_{k\xmod p} e(-\tfrac{ak}{p})v_{-kp}\right\}.
\end{gather}
Then $U_x^a$ only depends on $x\xmod p$, and $W$ is the direct sum of the $U_x^a$ with $a\neq 0$. With $S=\left(\begin{smallmatrix}0&-1\\1&0\end{smallmatrix}\right)$ and $T=\left(\begin{smallmatrix}1&1\\0&1\end{smallmatrix}\right)$ we have $T(U_x^a)=U_{x+1}^a$ and $S(U_x^a)=U_{-x^{-1}}^{a*x^2}$, where 
$a*x^2$ means $a$ in case $x\in \{0,\infty\}$, and is the usual product $ax^2$ otherwise. So if $W'$ is the sum of the $U_x^{a}$ where $a$ is restricted to non-zero quadratic residues, and if $W''$ is the sum of the $U_x^{a}$ where $a$ runs over the non-zero non-quadratic residues, then $W=W'\oplus W''$ as $G$-modules. 
We may observe directly that $W'$ and $W''$ are irreducible, or apply Frobenius reciprocity to show that $\#(B\backslash G/B)=4$ is an upper bound on the number of constituents of $\hat1_{p^2}$. We have $\dim W'=\dim W''=\frac12(p^2-1)$ so the lemma has been proved.
\end{proof}

The next result is our main application of Proposition \ref{prop:wt1-precise}, and our main technical tool for proving Theorem \ref{thm:wt1-main}.

\begin{lem}\label{lem:wt1-mnot0mod8nnot0mod32mdiv1mod4ndivnocb}
Let $m$ and $N$ be positive integers. 
Assume that none of $m$, $N$ or $mN$ are divisible by the cube of a prime that is congruent to $3$ modulo $4$.
Assume also that either $m$ is odd and $N$ is not divisible by $64$, 
or $m$ is congruent to $4$ modulo $8$ and $N$ is not divisible by $64$, 
or both $m$ and $N$ are not divisible by $32$. 
Then $J_{1,m}(N)$ is zero-dimensional.
\end{lem}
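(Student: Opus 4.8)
The strategy is to feed the formula of Proposition~\ref{prop:wt1-precise} into the factorization of Weil-representation characters into prime-power parts developed in \S\ref{sec:wt1:ppp}. Fix $m$ and $N$ as in the hypotheses and choose a convenient $M$ with $m\mid M$ and $N\mid 4M$; the natural choice is to take $M$ to be built out of $m$ and the odd part of $N$ together with a controlled power of $2$, so that no prime congruent to $3\bmod 4$ divides $M$ to the third power and so that the $2$-part of $M$ is as small as the hypotheses allow (no $64$, or no $32$, depending on which case we are in). With this $M$ fixed, Proposition~\ref{prop:wt1-precise} expresses $\dim J_{1,m}(N)$ as $\sum_{m'\mid M,\ M/m'\text{ square-free}}\langle\vartheta_m^-\vartheta_{m'}^+,\hat 1_N\rangle$, and it suffices to show each inner product vanishes.

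For each term, factor $\vartheta_m^-$, $\vartheta_{m'}^+$ and $\hat 1_N$ into their $p$-parts using \eqref{eqn:wt1:pmr-chipparts} and \eqref{eqn:wt1:pmr-CDmpparts}: writing $\gamma\mapsto(\gamma_p)$ for the isomorphism \eqref{eqn:wt1:pmr-Mpt4mpparts}, the inner product over $\SL_2(\ZZ/4M\ZZ)$ becomes a product over $p\mid 4M$ of inner products of the corresponding $p$-parts. (One must be slightly careful about how the sign in $\vartheta_m^-$ versus $\vartheta_{m'}^+$ distributes over the tensor factors; this is governed by Lemma~\ref{lem:wt1:pmr-Sevs}, which tells us, in terms of the action of $-I$, exactly which combinations of signs on the $2$-part and odd parts are compatible with $\hat 1_N$ factoring through $\SL_2/\{\pm I\}$. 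So the nonzero terms are exactly those for which an odd number of the prime-power factors carry the $-$ sign in a way matching the parity forced by $\hat 1_N$, and we only need to rule those out.) For an odd prime $p$ dividing $M$ but with $M/m'$ square-free forcing $v_p(m')\geq v_p(M)-1$, the relevant odd $p$-part of $\vartheta_m^-\vartheta_{m'}^+$ is, up to a Galois twist, $\lambda_{p^a}^{\pm}\lambda_{p^b}^{\pm}$ (or involves the full $\vartheta$ when $p^3\nmid$ the product, which by hypothesis happens for all $p\equiv 3\bmod 4$), and the hypothesis that no $p\equiv 3\bmod 4$ cubes divide $m$, $N$ or $mN$ keeps every odd exponent at most $2$. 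For $p\equiv 1\bmod 4$ there is no constraint, but $-I$ acts trivially on the whole odd $p$-part of $\SL_2(\ZZ/p^a\ZZ)$-modules when $-1$ is a square mod $p^a$, so those factors are harmless; the parity obstruction must be supplied by the $p\equiv3\bmod4$ or $p=2$ factors.

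The decisive local computations are then: at $p=2$, invoke Lemma~\ref{lem:wt1:pmr-sl264} (which handles $\Mpt(\ZZ/64\ZZ)$ exactly) together with the weaker but easier statements for $\Mpt(\ZZ/16\ZZ)$ and $\Mpt(\ZZ/32\ZZ)$ — this is where the three-way case split on the $2$-adic valuations of $m$ and $N$ enters: if $m$ is odd or $m\equiv 4\bmod 8$ we may take the $2$-part of $M$ to be at most $64$ and $N$ not divisible by $64$ kills the $\hat1_{2^c}$ pairing by the lemma; if both $m,N$ are not divisible by $32$ we can keep the $2$-part of $M$ at $16$ and use the $\hat1_{16}$ case. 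At an odd prime $p\equiv 3\bmod 4$ with $p^2\,\|\,M$, invoke Lemma~\ref{lem:wt1:pmr-sl2p2}: $\hat1_{p^2}$ decomposes into four constituents with known degrees, and because $-I$ is \emph{not} a square mod $p$ when $p\equiv 3\bmod 4$, one checks from the degrees and the $S$-eigenvalue computation (as in Lemma~\ref{lem:wt1:pmr-Sevs}) that $\hat1_{p^2}$ lands entirely in the $(-I)$-even part, whereas the relevant $\lambda_{p^a}^{\pm}\lambda_{p^b}^{\pm}$ factor contributing the required parity is $(-I)$-odd; hence the inner product over that prime vanishes, and the product over all primes vanishes. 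Assembling the local vanishings term by term over $m'$ gives $\dim J_{1,m}(N)=0$.

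\textbf{Main obstacle.} The delicate point is the bookkeeping of the $-I$-parity across prime-power factors: one has to show that in every surviving term of the Proposition~\ref{prop:wt1-precise} sum, the ``$-$'' that distinguishes $\vartheta_m^-$ from $\vartheta_m^+$ is forced, by Lemma~\ref{lem:wt1:pmr-Sevs} and the constraint that $\hat1_N$ factors through $\SL_2/\{\pm I\}$, to sit at a prime $p$ (either $p=2$, or some $p\equiv3\bmod4$ with $p^2\|M$) where one of the computational lemmas (\ref{lem:wt1:pmr-sl264} or \ref{lem:wt1:pmr-sl2p2}) applies and yields $0$; the hypotheses on cubes of primes $\equiv3\bmod4$ and on powers of $2$ are precisely what guarantee that no surviving term can hide its ``$-$'' at a prime we cannot control. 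Verifying this exhaustively — in particular that the cases not covered by the $\equiv3\bmod4$ hypothesis are covered by the $p=2$ lemmas, and vice versa — is the crux, and is what makes the three-way $2$-adic case split and the cube condition appear exactly as stated.
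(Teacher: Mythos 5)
Your overall skeleton is the same as the paper's: apply Proposition \ref{prop:wt1-precise} with a suitable $M$, decompose each term into prime-power parts via (\ref{eqn:wt1:pmr-Mpt4mpparts})--(\ref{eqn:wt1:pmr-CDmpparts}), and kill each term locally using Lemmas \ref{lem:wt1:pmr-Sevs}, \ref{lem:wt1:pmr-sl264} and \ref{lem:wt1:pmr-sl2p2}. But the step you yourself flag as the crux --- the treatment of primes $p\equiv 3\pmod 4$ --- is where your argument breaks. Your claimed mechanism there is a $(-I)$-parity obstruction: that the odd $p$-part of the relevant product is $(-I)$-odd while $\hat1_{p^2}$ is $(-I)$-even. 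This is false for several of the combinations that actually occur. Since $\widetilde{S}^2$ acts on $\CC L_{p^k}(a)^{\pm}$ by $\pm\sigma_A^2$, with $\sigma_A^2=-1$ exactly when $p^k\equiv 3\pmod 4$, the products $\sigma(\lambda_{p^2}^+)\sigma'(\lambda_p^-)$ and $\sigma(\lambda_{p^2}^-)\sigma'(\lambda_p^+)$, as well as the lone factor $\sigma(\lambda_p^-)$ arising when $(l_p,l'_p)$ is $(p,1)$ or $(1,p)$, are all $(-I)$-even, so parity does not exclude them. What is actually needed is a character-degree argument: $\sigma(\lambda_{p^2}^\pm)$ and $\sigma'(\lambda_p^\pm)$ are irreducible of different degrees, so their product cannot contain the trivial character; and $\tfrac12(p\pm1)$ is not among the constituent degrees $1$, $p$, $\tfrac12(p^2-1)$ of $\hat1_{N_p}$ from Lemma \ref{lem:wt1:pmr-sl2p2} (with the extra observation that $\CC L_3(a)^-$, though one-dimensional, is non-trivial). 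Your parenthetical claim that $-I$ acts trivially on the whole odd $p$-part when $p\equiv1\pmod4$ is also wrong: it acts by $-1$ on $\CC L_{p^k}(a)^-$.

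More importantly, you have missed what the $p\equiv3\pmod 4$ analysis is for. It does not kill every term locally: the terms in which such a $p$ divides both $l$ and $l'$ exactly once, or both with even multiplicity, survive the odd-prime analysis entirely. Its purpose is to show that in every surviving term the odd parts of $l$ and $l'$ are congruent modulo $4$, which is precisely the condition $\sigma(i)=\sigma'(i)$ on the Galois twists of the $2$-parts that Lemmas \ref{lem:wt1:pmr-Sevs} and \ref{lem:wt1:pmr-sl264} require as a hypothesis. Without establishing that congruence you cannot invoke the $2$-adic lemmas at all, so the final step of your argument does not close. The chain --- the cube condition at $p\equiv3\pmod4$ forces $l_p$ and $l'_p$ to have matching parity of exponent, hence the odd parts of $l$ and $l'$ agree mod $4$, hence $\sigma(i)=\sigma'(i)$, hence the $2$-part lemmas apply --- is the actual content of the proof and needs to be supplied.
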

\begin{proof}
Since $J_{1,m}(N')$ is a subspace of $J_{1,m}(N)$ when $N'$ divides $N$ we may assume without loss of generality that 
$N$ is divisible by $16$. 
Let $P$ be the product of the prime divisors of $(m,N)$ that are congruent to $3$ modulo $4$. Then taking $M=\frac1{4P}mN$ in Proposition \ref{prop:wt1-precise} we see that the dimension of $J_{1,m}(N)$ is a sum of scalar products 
\begin{gather}\label{eqn:wt1-mnot0mod8nnot0mod32}
\langle \nu_{l}^{\a}\nu_{l'}^{\a'},\hat1_N\rangle
\end{gather}
(cf. (\ref{eqn:wt1:weil-chrs})), to be computed in the space of class functions on $\SL_2(\ZZ/4M\ZZ)$, where $l$ is a divisor of $m$, and $l'$ divides $\frac1{4P}mN$. 

Suppose that $p=3\xmod 4$ is a prime dividing $mN$. Then 
$p$ may occur as a divisor of $l$ or $l'$, or possibly both, in a summand (\ref{eqn:wt1-mnot0mod8nnot0mod32}). We claim that such a summand (\ref{eqn:wt1-mnot0mod8nnot0mod32}) must vanish unless $l$ and $l'$ are both exactly divisible by $p$, or both exactly divisible by even powers of $p$. Write $l_p$ for the highest power of $p$ dividing $l$, and interpret $l'_p$ similarly. 
By our conditions on $m$ and $N$ the remaining possibilities are that $(l_p,l'_p)$ is $(p^2,p)$, $(p,p^2)$, $(p,1)$ or $(1,p)$. Also, in the former two cases $N$ must be coprime to $p$. 
If $(l_p,l_p')=(p^2,p)$ then the $p$-part of $\nu_l^\a\nu_{l'}^{\a'}$ is $\sigma(\lambda_{p^2}^\pm)\sigma'(\lambda_p^\pm)$ (cf. (\ref{eqn:wt1:weil-chrs})) for some Galois automorphisms $\sigma,\sigma'$ of $\QQ(e(\tfrac{1}{p^2}))$, and the $p$-part $(\hat1_N)_p$ of $\hat1_N$ is trivial. But $\sigma(\lambda_{p^2}^\pm)$ and $\sigma'(\lambda_p^\pm)$ are irreducible of different degrees, so the trivial character cannot arise as a constituent of their product. The case that $(l_p,l_p')=(p,p^2)$ is similar. For $(l_p,l_p')$ equal to $(p,1)$ or $(1,p)$ the $p$-part of 
$\nu_l^{\a}\nu_{l'}^{\a'}$ is $\sigma(\lambda_p^\pm)$ for some Galois automorphism $\sigma$ of $\QQ(e(\tfrac{1}{p}))$. However, 
$(\hat1_N)_p$ 
is now the permutation character of $\SL_2(\ZZ)$ arising from its action on cosets of $\Gamma_0(N_p)$ where $N_p$ is the largest power of $p$ dividing $N$ (namely, $1$, $p$ or $p^2$). 
As such, 
the irreducible constituents of $(\hat1_N)_p=\hat1_{N_p}$ have degrees $1$ (the trivial character), $p$, or $\frac12(p^2-1)$, according to Lemma \ref{lem:wt1:pmr-sl2p2}.
Certainly $\frac{1}{2}(p+1)$ is not $1$ or $p$ or $\frac12(p^2-1)$ for any odd prime $p$, and $\frac12(p-1)$ is not one of these degrees when $p>3$. 
Although $\CC L_3(a)^-$ is $1$-dimensional it is not trivial. So we may conclude that the summand (\ref{eqn:wt1-mnot0mod8nnot0mod32}) vanishes unless any prime that is $3$ modulo $4$ either divides both $l$ and $l'$ exactly, or divides both $l$ and $l'$ with even multiplicity.

So for the remainder of the proof we may restrict to summands (\ref{eqn:wt1-mnot0mod8nnot0mod32}) such that the odd parts of $l$ and $l'$ (i.e. $\frac{l}{l_2}$ and $\frac{l'}{l'_2}$) are congruent modulo $4$. 
For now, let us also restrict to the case that $m$ is odd and $N=32\xmod 64$. 
Then the $l$ in a summand (\ref{eqn:wt1-mnot0mod8nnot0mod32}) is 
odd, and $l'\neq 0\xmod 16$. In the case that $l'$ is odd we have $l=l'\xmod 4$, so 
the $2$-parts of $\nu_l^{\a}$ and $\nu_{l'}^{\a'}$ are both $\sigma(\nu_1^+)=\sigma(\vartheta_1^+)$, where $\sigma$ is the Galois automorphism of $\QQ(e(\frac14))$ that maps $e(\frac14)$ to $e(\frac{l}4)$. 
By Lemma \ref{lem:wt1:pmr-Sevs} no irreducible constituent of $\sigma(\vartheta_1^+)^2$ factors through $\SL_2(\ZZ)/\{\pm I\}$, 
so (\ref{eqn:wt1-mnot0mod8nnot0mod32}) vanishes in this case. If $l'=2\xmod 4$ then 
the $2$-part of $\nu_{l'}^{\a'}$ is $\sigma'(\vartheta_2^{\pm})$ (we have $\nu_2^\pm=\vartheta_2^\pm$) where $\sigma'$ is the Galois automorphism of $\QQ(e(\frac{1}{8}))$ such that $\sigma(e(\frac18))=e(\frac{a'}8)$ for $a'=\frac{l'}2 \xmod 8$, 
and the sign in $\vartheta_2^\pm$ is determined by $\a'$. We claim that the products $\sigma(\vartheta_1^+)\sigma'(\vartheta_2^\pm)$ have no constituents in common with $(\hat1_N)_2=\hat1_{32}$. Since $l=\frac{l'}2\xmod 4$ this follows from Lemma \ref{lem:wt1:pmr-Sevs} for $\sigma(\vartheta_1^+)\sigma'(\vartheta_2^+)$, and follows from Lemma \ref{lem:wt1:pmr-sl264} for 
$\sigma(\vartheta_1^+)\sigma'(\vartheta_2^-)$.  
A directly similar argument holds for $l'=4\xmod 8$ and $l'=8\xmod 16$, wherein 
the $2$-part of $\nu_{l'}^{\a'}$ is $\sigma'(\nu_{k'}^{\pm})$ for $k'=4$ or $k'=8$, 
and $\sigma'$ is the Galois automorphism of $\QQ(e(\frac{1}{4k'}))$ mapping $e(\frac{1}{4k'})$  to $e(\frac{a'}{4k'})$ for $a'$ an inverse to $\frac{l'}{k'}$ modulo $4k'$. 
Just as above, now using $l=\frac{l'}{k'}\xmod 4$, 
Lemmas \ref{lem:wt1:pmr-Sevs} and \ref{lem:wt1:pmr-sl264} show that 
$\sigma(\vartheta_1^+)\sigma'(\vartheta_{k'}^\pm)$ has no constituents in common with $\hat1_{32}$. 
We conclude that $\dim J_{1,m}(N)=0$ when $m$ is odd and $N=32\xmod 64$. 

The argument for $m=4\xmod 8$ and $N=32\xmod 64$ is the same except that we also have to consider products $\sigma(\vartheta_4^\pm)\sigma'(\vartheta_{k'}^\pm)$ where $k'\in \{1,2,4,8\}$ and $\sigma$ is now a Galois automorphism of $\QQ(e(\tfrac{1}{16}))$ such that $\sigma(i)=\sigma'(i)$. Lemmas \ref{lem:wt1:pmr-Sevs} and \ref{lem:wt1:pmr-sl264} show that such products have no constituents in common with $\hat1_{32}$. Indeed, these same lemmas show that the $\sigma(\vartheta_k^\pm)\sigma'(\vartheta_{k'}^\pm)$ and $\sigma(\vartheta_k^\mp)\sigma'(\vartheta_{k'}^\pm)$ have no constituents in common with $\hat1_{16}$ when $k,k'\in\{1,2,4,8,16\}$, and $\sigma$ and $\sigma'$ are suitable Galois automorphisms satisfying $\sigma(i)=\sigma(i')$. This handles the case that $m$ and $N$ are both not divisible by $32$, and completes the proof of the claim. 
\end{proof}

Note that $\nu_{p^2}^+$ does occur as a constituent of $\hat1_N$ when $p$ is an odd prime and 
$p^2|N$. This is the main reason for our restriction on odd primes congruent to $3\xmod 4$ 
in the statement of Lemma \ref{lem:wt1-mnot0mod8nnot0mod32mdiv1mod4ndivnocb}. 
Lemma \ref{lem:wt1:pmr-sl264} explains our restrictions on powers of $2$.

Lemma \ref{lem:wt1-mnot0mod8nnot0mod32mdiv1mod4ndivnocb} can handle all but a few of the cases of Theorem \ref{thm:wt1-main}. For the remainder we require to show the vanishing of $J_{1,m}(N)$ in some instances where $mN$ is divisible by $27$. We obtain this via three more specialized applications of Proposition \ref{prop:wt1-precise}, which we now present. Our proofs will apply properties of the character tables of $\SL_2(\ZZ/9\ZZ)$ and $\SL_2(\ZZ/25\ZZ)$ which we verified using GAP \cite{GAP4}.

\begin{lem}\label{lem:wt1-m3n144}
The space $J_{1,3}(144)$ is zero-dimensional.
\end{lem}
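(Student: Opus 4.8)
The goal is to show $J_{1,3}(144)=\{0\}$, where $144=16\cdot 9$, so the relevant prime powers are $2^4$ and $3^2$. The natural tool is Proposition~\ref{prop:wt1-precise}: choose $M$ with $3\mid M$ and $144\mid 4M$, for instance $M=36$, so that $\dim J_{1,3}(144)=\sum_{m'\mid 36,\ 36/m'\text{ sq.-free}}\langle \vartheta_3^-\vartheta_{m'}^+,\hat1_{144}\rangle$, an identity of class functions on $\SL_2(\ZZ/144\ZZ)$. The divisors $m'$ of $36$ with $36/m'$ square-free are $m'\in\{6,12,18,36\}$, so there are four scalar products to evaluate, and each character $\vartheta_3^-\vartheta_{m'}^+$ factors as an external tensor product over its $2$-part and $3$-part via \eqref{eqn:wt1:pmr-Mpt4mpparts}. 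The plan is to show each of these four scalar products vanishes by examining the $3$-part (and if necessary the $2$-part), exactly as in the proof of Lemma~\ref{lem:wt1-mnot0mod8nnot0mod32mdiv1mod4ndivnocb}, except that now $27\nmid 3m'$ for all the relevant $m'$ since $3\| 3$ always and $9\| m'$ for $m'\in\{18,36\}$, $3\|m'$ for $m'\in\{6,12\}$—so the cube-of-a-prime obstruction does not directly apply, but $9\mid N$ does, which is precisely the troublesome case flagged in the remark after Lemma~\ref{lem:wt1-mnot0mod8nnot0mod32mdiv1mod4ndivnocb}.

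First I would split the four terms by the $3$-adic valuation of $m'$. For $m'\in\{6,12\}$ the odd parts of $3$ and $m'$ are $3$ and $3$ respectively, congruent mod $4$, and the $3$-part of $\vartheta_3^-\vartheta_{m'}^+$ is $\sigma(\lambda_3^\pm)\sigma'(\lambda_3^\pm)$ for suitable Galois automorphisms of $\QQ(e(\tfrac13))$, while $(\hat1_{144})_3=\hat1_9$ has, by Lemma~\ref{lem:wt1:pmr-sl2p2}, constituents of degree $1$, $3$, and $\tfrac12(3^2-1)=4$. Since $\CC L_3(a)^\pm$ is one-dimensional, $\lambda_3^\pm$ is a linear character, and a product of two linear characters is linear; I must check that this product is \emph{not} the trivial character of $\SL_2(\ZZ/9\ZZ)$—equivalently, that $-I$ (or some other element) acts non-trivially—so that it cannot appear in $\hat1_9$, whose only linear constituent is trivial. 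For $m'\in\{18,36\}$ the odd parts are $3$ and $9$, so one of the valuations is $2$ and the other is $1$; then the $3$-part is $\sigma(\lambda_3^\pm)\sigma'(\lambda_9^{\new,\pm})$ (up to swapping), a product of a degree-$1$ character with a character of degree $\dim \CC L_9(a)^{\new,\pm}$, which is $\tfrac12(9-3)=3$ or $\tfrac12(9+3)-\text{(correction)}$; in any case the product is irreducible of degree $\neq 1$ and $\neq 3$ in general—but here $N_3=9$ so I must verify it is not one of the degrees $\{1,3,4\}$ appearing in $\hat1_9$. This is the step that requires actual computation with the character table of $\SL_2(\ZZ/9\ZZ)$, which the paper says was verified in GAP.

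The cleanest route, and the one I expect the authors take, is to bypass the degree-counting subtleties by working directly with the character table of $\SL_2(\ZZ/9\ZZ)$ (and possibly $\Mpt(\ZZ/16\ZZ)$) on the computer: compute each $\vartheta_3^-(\gamma_3)\vartheta_{m'}^+(\gamma_3)$ as a class function on $\SL_2(\ZZ/9\ZZ)$, compute $\hat1_9$, and verify the inner product is zero; then invoke the $2$-part only if some $3$-part fails to vanish. Given that $\vartheta_3^-$ is $2$-dimensional and each $\vartheta_{m'}^+$ has small dimension, these are tiny computations. I would then assemble: each of the four summands vanishes, hence $\dim J_{1,3}(144)=0$.

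\textbf{Main obstacle.} The delicate point is precisely that $9\mid 144$, so $(\hat1_{144})_3=\hat1_9$ is \emph{not} the trivial character but has several non-trivial constituents (degrees $1,3,4$ by Lemma~\ref{lem:wt1:pmr-sl2p2}), and $\nu_{9}^+$ can genuinely occur in $\hat1_N$ when $9\mid N$, as the remark after Lemma~\ref{lem:wt1-mnot0mod8nnot0mod32mdiv1mod4ndivnocb} warns. So one cannot simply quote a dimension mismatch; one must check that the specific $3$-parts $\sigma(\lambda_3^\pm)\sigma'(\lambda_{3^j}^{\new,\pm})$ arising for the four values $m'\in\{6,12,18,36\}$ are genuinely absent from the decomposition of $\hat1_9$—and that the Galois twists $\sigma,\sigma'$ (inverses of $\tfrac{m}{m_3}$, $\tfrac{4m}{m_3'}$ mod $9$, per \S\ref{sec:wt1:ppp}) do not conspire to produce a constituent of $\hat1_9$. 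This is why the proof must descend to an explicit character-table check rather than a purely structural argument; the bookkeeping of which twisted characters $\sigma(\lambda_{3^j}^\pm)$ appear, and confirming each pairing against $\hat1_9$ is zero, is the real content of the lemma.
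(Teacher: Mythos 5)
Your skeleton is the paper's: Proposition~\ref{prop:wt1-precise} with $M=36$, a prime-by-prime analysis of the irreducible constituents $\nu_3^-\nu_{l'}^{\alpha'}$ (with $l'\mid 36$), and a terminal computation against the character table of $\SL_2(\ZZ/9\ZZ)$. But the route you actually sketch has a concrete gap for the terms in which $l'$ is exactly divisible by $3$. First, a factual error: $\CC L_3(a)^+$ is two-dimensional (spanned by $e^0$ and $e^1+e^2$), so $\lambda_3^+$ is not a linear character; only $\lambda_3^-$ is. More importantly, your plan for $m'\in\{6,12\}$ is to verify that the product of $3$-parts is a \emph{non-trivial} linear character. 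That verification fails: for the constituent $\nu_6^{\alpha'}$ with $\alpha'(2)=\alpha'(3)=-1$ the twist $\sigma'$ sends $e(\tfrac13)$ to $e(\tfrac23)$ (since $a'_3$ is an inverse of $\tfrac{4\cdot 6}{3}=8$ modulo $3$), so $\sigma'(\lambda_3^-)=\overline{\lambda_3^-}$ and the $3$-part of the product is $\lambda_3^-\overline{\lambda_3^-}=1$, the trivial character, which of course occurs in $\hat1_9$. So for that term the $3$-part gives no obstruction whatsoever, and the $2$-part analysis you relegate to a contingency ("invoke the $2$-part only if some $3$-part fails to vanish") is indispensable: one must show, as the paper does, that $\sigma(\vartheta_1^+)\sigma'(\vartheta_{k'}^{\pm})$ with $k'\in\{1,2,4\}$ and $\sigma(i)=\sigma'(i)=-i$ shares no constituent with $(\hat1_{144})_2=\hat1_{16}$, using Lemmas~\ref{lem:wt1:pmr-Sevs} and~\ref{lem:wt1:pmr-sl264}. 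Your proposal never carries this step out, and without it the proof is incomplete.

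For the terms with $9\mid l'$ your degree count also cannot close the argument: $\dim\CC L_9(a)^{\new,\pm}=4$, so $\lambda_3^-\sigma'(\lambda_9^{\pm})$ is irreducible of degree $4$, which collides exactly with the two degree-$4$ constituents of $\hat1_9$ supplied by Lemma~\ref{lem:wt1:pmr-sl2p2}. You do correctly anticipate that an explicit character computation is then unavoidable; the paper kills $\lambda_3^-\sigma'(\lambda_9^-)$ by an $S$-eigenvalue parity argument and $\lambda_3^-\sigma'(\lambda_9^+)$ by direct computation in $\SL_2(\ZZ/9\ZZ)$. In summary: the strategy is the right one and matches the paper, but as written the argument would not go through — the essential $2$-part computation for the exactly-divisible-by-$3$ terms is missing, and the structural claims you offer in its place are false.
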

\begin{proof}
Applying Proposition \ref{prop:wt1-precise} with $m=3$, $M=36$ and $N=144$ we see that the dimension of $J_{1,3}(144)$ is a sum of terms
\begin{gather}\label{eqn:wt1-m3n144}
\langle \nu_{3}^-\nu_{l'}^{\a'},\hat1_{144}\rangle
\end{gather}
(cf. (\ref{eqn:wt1:weil-chrs})) where $l'$ is a divisor of $36$ and $\a'(2)\a'(3)=1$. 
The $3$-part of $\hat1_{144}$ is $(\hat1_{144})_3=\hat1_9$ 
and has no non-trivial constituents with degree less than $2$ (cf. Lemma \ref{lem:wt1:pmr-sl2p2}). But the $3$-part of $\nu_{3}^-$ is $\lambda_3^-$, which is a non-trivial character of degree $1$. So if $3$ does not divide $l'$ then the $3$-part of $\nu_{l'}^{\a'}$ is trivial and (\ref{eqn:wt1-m3n144}) has a factor $\langle \lambda_3^-,\hat1_9\rangle$ which vanishes. So we may assume that $3$ divides $l'$. Suppose that $9$ does not divide $l'$. Then taking $k'$ to be the highest power of $2$ dividing $l'$ we have that $k'\in\{1,2,4\}$ and the $2$-part of $\nu_{3}^-\nu_{l'}^{\a'}$ is $\sigma(\vartheta_1^+)\sigma'(\vartheta_{k'}^\pm)$ where $\sigma$ and $\sigma'$ satisfy $\sigma(i)=\sigma'(i)=-i$. No such products have a constituent in common with $(\hat1_{144})_2=\hat1_{16}$ by Lemmas \ref{lem:wt1:pmr-Sevs} and \ref{lem:wt1:pmr-sl264}, so we may assume that $l'$ is divisible by $9$.
Then 
the $3$-part of $\nu_3^-\nu_{l'}^{\a'}$ is $\lambda_3^-\sigma'(\lambda_9^\pm)$ where the sign is such that $\a'(3)=\pm 1$, and $\sigma'(e(\tfrac19))=e(\frac{a'}{9})$ for 
$a'$ an inverse to $4k'$ modulo $9$.
The products $\lambda_3^-\sigma'(\lambda_9^-)$ have no constituents in common with $\hat1_9$ because $S$ only has $\pm i$ for eigenvalues on corresponding representations. By direct computation we find that $\langle \lambda_3^-\sigma'(\lambda_9^+),\hat1_9\rangle=0$ 
for all choices of $\sigma'$. 
We conclude that $J_{1,3}(144)$ vanishes, as required.
\end{proof}

\begin{lem}\label{lem:wt1-m6n36}
The space $J_{1,6}(36)$ is zero-dimensional.
\end{lem}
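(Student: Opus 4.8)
\emph{Proof proposal.} The plan is to argue as in the proof of Lemma~\ref{lem:wt1-m3n144}. Applying Proposition~\ref{prop:wt1-precise} with $m=6$, $M=18$ and $N=36$ (note $6\mid 18$ and $36$ divides $4M=72$), we see that $\dim J_{1,6}(36)$ is a sum of scalar products $\langle\vartheta_6^-\vartheta_{m'}^+,\hat1_{36}\rangle$, computed in the ring of class functions on $\SL_2(\ZZ/72\ZZ)$, where $m'$ runs over the divisors of $18$ with $18/m'$ square-free, i.e. $m'\in\{3,6,9,18\}$. Decomposing $\vartheta_6^-$ and the $\vartheta_{m'}^+$ into irreducible $\Mpt(\ZZ)$-modules via~(\ref{eqn:wt1-thmpmdec}), it suffices to show $\langle\nu_6^{\alpha}\nu_l^{\alpha'},\hat1_{36}\rangle=0$ for every pair of constituents $\nu_6^{\alpha}$ (with $\alpha(-1)=-1$) and $\nu_l^{\alpha'}$ (with $l\mid 18$ and $\alpha'(-1)=1$) that occurs. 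Since $4M=72=8\cdot 9$ I would compute over the isomorphism $\Mpt(\ZZ/72\ZZ)\cong\Mpt(\ZZ/8\ZZ)\times\SL_2(\ZZ/9\ZZ)$ of~(\ref{eqn:wt1:pmr-Mpt4mpparts}): each $\nu_6^{\alpha}\nu_l^{\alpha'}$ is the external tensor product of its $2$-part and its $3$-part, and since $\hat1_{36}=\hat1_4\boxtimes\hat1_9$ the scalar product factors as $\langle(\nu_6^{\alpha}\nu_l^{\alpha'})_2,\hat1_4\rangle\cdot\langle(\nu_6^{\alpha}\nu_l^{\alpha'})_3,\hat1_9\rangle$, so it is enough that for each admissible pair one of the two factors vanishes.

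For the $2$-part, recall from \S\ref{sec:wt1:ppp} that the $2$-part of $\nu_6^{\alpha}$ is $\sigma(\vartheta_2^{\alpha(2)})$ for the Galois automorphism $\sigma$ of $\QQ(e(\tfrac18))$ with $\sigma(i)=-i$, while the $2$-part of $\nu_l^{\alpha'}$ is $\sigma'(\vartheta_{l_2}^{\alpha'(2)})$ with $l_2\in\{1,2\}$ (and $\vartheta_1^-=0$), where $\sigma'(i)=-i$ when $l\in\{3,6\}$ and $\sigma'(i)=i$ when $l\in\{1,2,9,18\}$. When $\sigma(i)=\sigma'(i)$, Lemma~\ref{lem:wt1:pmr-Sevs} shows $-I$ acts non-trivially on every constituent of $\sigma(\vartheta_2^{+})\sigma'(\vartheta_{l_2}^{+})$ and of $\sigma(\vartheta_2^{-})\sigma'(\vartheta_{l_2}^{-})$, so these are orthogonal to $\hat1_4$ (which factors through $\SL_2(\ZZ/4\ZZ)/\{\pm I\}$), while $\sigma(\vartheta_2^{\mp})\sigma'(\vartheta_{l_2}^{\pm})$ is orthogonal to $\hat1_{16}$, and hence to $\hat1_4$ (a constituent of $\hat1_{16}$), by Lemma~\ref{lem:wt1:pmr-sl264}. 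When $\sigma(i)=-\sigma'(i)$, Lemma~\ref{lem:wt1:pmr-Sevs} instead gives that $-I$ acts non-trivially on every constituent of $\sigma(\vartheta_2^{-})\sigma'(\vartheta_{l_2}^{+})$ and of $\sigma(\vartheta_2^{+})\sigma'(\vartheta_{l_2}^{-})$. Running through the cases, this kills the $2$-part factor whenever $l\in\{3,6\}$, and whenever $\alpha(2)\neq\alpha'(2)$; the only terms whose $2$-part factor is not forced to vanish are those with $l\in\{1,2,9,18\}$ and $\alpha(2)=\alpha'(2)$.

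For those remaining terms the $3$-part must do the work. The $3$-part of $\nu_6^{\alpha}$ is $\sigma_3(\lambda_3^{\alpha(3)})$ with $\sigma_3$ complex conjugation on $\QQ(e(\tfrac13))$; when $l\in\{1,2\}$ the $3$-part of $\nu_l^{\alpha'}$ is trivial, and since $\lambda_3^{\alpha(3)}$ is non-trivial (a non-trivial $1$-dimensional character if $\alpha(3)=-$ and a $2$-dimensional irreducible if $\alpha(3)=+$) while $\hat1_9$ has no non-trivial irreducible constituent of degree $\le 2$ (Lemma~\ref{lem:wt1:pmr-sl2p2} gives degrees $1,3,4,4$), the $3$-part factor vanishes for $l\in\{1,2\}$. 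For $l\in\{9,18\}$ the constraints $\alpha(2)\alpha(3)=-1$, $\alpha'(2)\alpha'(3)=+1$ together with $\alpha(2)=\alpha'(2)$ force $\alpha(3)=-\alpha'(3)$, and the $3$-part is $\sigma_3(\lambda_3^{\alpha(3)})\otimes\sigma_3'(\lambda_9^{\alpha'(3)})$ for an explicit Galois automorphism $\sigma_3'$ of $\QQ(e(\tfrac19))$. I would finish by checking, using the character table of $\SL_2(\ZZ/9\ZZ)$ computed with GAP~\cite{GAP4}, that $\langle\sigma_3(\lambda_3^{+})\sigma_3'(\lambda_9^{-}),\hat1_9\rangle=0$ and $\langle\sigma_3(\lambda_3^{-})\sigma_3'(\lambda_9^{+}),\hat1_9\rangle=0$ for the relevant $\sigma_3,\sigma_3'$. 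Combining the $2$-part and $3$-part analyses, every term vanishes, so $\dim J_{1,6}(36)=0$.

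The main obstacle is this final step. Unlike in the purely $2$-adic situation, where Lemmas~\ref{lem:wt1:pmr-Sevs} and~\ref{lem:wt1:pmr-sl264} provide parity and degree obstructions, the products $\lambda_3^{\pm}\otimes\lambda_9^{\mp}$ carry the same $-I$-action as $\hat1_9$ and, being of dimension $4$ or $8$, can a priori overlap the two $4$-dimensional constituents of $\hat1_9$---indeed a Galois twist of $\lambda_9^{+}$ is genuinely a constituent of $\hat1_9$, since $\nu_9^{+}$ occurs in $\hat1_N$ whenever $9\mid N$. So the vanishing here cannot come from a formal obstruction and must be read off explicit $\SL_2(\ZZ/9\ZZ)$-character values; keeping straight which Galois twist of $\lambda_9^{\pm}$ arises in each case---so that the correct irreducibles are being compared---is the delicate bookkeeping.
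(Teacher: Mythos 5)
Your setup coincides with the paper's: the same application of Proposition~\ref{prop:wt1-precise} with $M=18$, the same factorization of each $\langle\nu_6^{\alpha}\nu_{l}^{\alpha'},\hat1_{36}\rangle$ into a $2$-part against $\hat1_4$ and a $3$-part against $\hat1_9$, and the same use of Lemmas~\ref{lem:wt1:pmr-Sevs} and~\ref{lem:wt1:pmr-sl264} to dispose of the terms with $l\in\{3,6\}$. Your treatment of $l\in\{1,2\}$ and of $l=9$ via the $3$-part is also sound. But there is a genuine gap in the final step: for $l=18$ with $\alpha(2)=\alpha'(2)=-1$ (hence $\alpha(3)=+1$, $\alpha'(3)=-1$) you route the vanishing through the claim $\langle\sigma_3(\lambda_3^{+})\sigma_3'(\lambda_9^{-}),\hat1_9\rangle=0$, and this is false. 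Writing $B$ for the image of $\Gamma_0(9)$ in $\SL_2(\ZZ/9\ZZ)$, one checks that the vector $e^0\otimes(e^3-e^6)\in\CC L_3(r)^{+}\otimes\CC L_9(s)^{-}$ is $B$-invariant for \emph{every} choice of units $r,s$: it is $T$-fixed because the quadratic form vanishes on its support, and under $\operatorname{diag}(u,u^{-1})$ the sign $\left(\tfrac{u}{3}\right)$ coming from the $\CC L_3$ factor cancels against the sign of the permutation of $\{e^3,e^6\}$. A direct character sum over $B$ confirms $\langle\lambda_3^{+}\sigma'(\lambda_9^{-}),\hat1_9\rangle=1$ for all Galois twists. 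So the GAP verification you defer to would come back nonzero, and the proposal stalls exactly at the step you flag as the main obstacle.

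The term in question does vanish, but for a reason your $2$-part filter cannot see: its $2$-part is $\sigma(\vartheta_2^{-})\vartheta_2^{-}$, a character of degree $1$ on which $T$ acts by $e(\tfrac38)e(\tfrac18)=-1$; being a non-trivial linear character it cannot occur in the permutation character $\hat1_4$, whose unique linear constituent is trivial. This is how the paper kills that case, and it also uses a degree count on the $2$-part to kill $l'=9$ ($\sigma(\vartheta_2^{+})\vartheta_1^{+}$ is irreducible of degree $6$ while $\hat1_4$ has degree $6$ but is reducible) rather than the $3$-part route you take --- though for $l=9$ your alternative does work, since $\langle\sigma(\lambda_3^{-})\sigma'(\lambda_9^{\pm}),\hat1_9\rangle=0$ for all twists. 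To repair the proof, replace the appeal to $\langle\sigma_3(\lambda_3^{+})\sigma_3'(\lambda_9^{-}),\hat1_9\rangle=0$ by the observation that when $\alpha(2)=\alpha'(2)=-1$ the $2$-part $\sigma(\vartheta_2^{-})\vartheta_2^{-}$ is non-trivial of degree $1$ and hence orthogonal to $\hat1_4$; the remaining computation $\langle\sigma(\lambda_3^{-})\lambda_9^{+},\hat1_9\rangle=0$ is the one the paper actually performs.
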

\begin{proof}
The proof is similar to that of Lemma \ref{lem:wt1-m3n144}, but a little more involved.
We apply Proposition \ref{prop:wt1-precise} with $m=6$, $N=36$ and $M=18$ to see that the dimension of $J_{1,6}(36)$ is a sum of terms
\begin{gather}\label{eqn:wt1-m6n36}
\langle \nu_{6}^\a\nu_{l'}^{\a'},\hat1_{36}\rangle
\end{gather}
where $\a(2)\a(3)=-1$, $l'$ is a divisor of $18$ and $\a'(2)\a'(3)=1$. 
The $3$-part of $\hat1_{36}$ is $(\hat1_{36})_3=\hat1_9$ 
and has no non-trivial constituents with degree less than $2$, but the $3$-part of $\nu_{6}^\a$ takes the form $\sigma(\lambda_3^\pm)$ and is thus a non-trivial character of degree $1$ or $2$. So arguing as for Lemma \ref{lem:wt1-m3n144} we restrict to the case that $3$ divides $l'$. If $l'$ is not divisible by $9$ then the $2$-part of $\nu_6^\a\nu_{l'}^{\a'}$ takes the form $\sigma(\vartheta_2^\pm)\sigma'(\vartheta_{k'}^\pm)$ for $k'\in\{1,2\}$ where $\sigma(i)=\sigma'(i)=-i$. No such products have constituents in common with $\hat1_4$ according to Lemmas \ref{lem:wt1:pmr-Sevs} and \ref{lem:wt1:pmr-sl264}, so we may assume that $l'$ is either $9$ or $18$. But if $l'=9$ then the $2$-part of $\nu_{6}^\a\nu_{l'}^{\a'}$ is $\sigma(\vartheta_2^\pm)\vartheta_{1}^+$ where $\sigma(i)=-i$. 
Lemma \ref{lem:wt1:pmr-Sevs} tells us that only $\sigma(\vartheta_2^+)\vartheta_1^+$ can have a constituent in common with $(\hat1_{36})_2=\hat1_4$, but it is irreducible of degree $6$, and $\hat1_4$ is degree $6$ but not irreducible. 

So we are left with the case that $l'=18$. Now the $2$-part of $\nu_{6}^\a\nu_{l'}^{\a'}$ is $\sigma(\vartheta_2^\pm)\vartheta_{2}^\pm$ where $\sigma(i)=-i$ as before. Applying Lemma \ref{lem:wt1:pmr-Sevs} we restrict to $\sigma(\vartheta_2^+)\vartheta_2^+$ and $\sigma(\vartheta_2^-)\vartheta_2^-$. The latter of these has degree $1$ but is not trivial, so is not a constituent of $\hat 1_4$, whereas the former does have a degree $2$ constituent in common with $\hat1_4$. So we have reduced to considering $\langle \nu_6^\a\nu_{18}^{\a'},\hat1_{36}\rangle$, where $\a(2)=1$, $\a(3)=-1$, $\a'(2)=1$ and $\a'(3)=1$. The $3$-part of this is $\langle \sigma(\lambda_3^-)\lambda_9^+,\hat1_9\rangle$, but this vanishes for both choices of $\sigma$. 
We conclude that $J_{1,6}(36)$ also vanishes.
\end{proof}

\begin{lem}\label{lem:wt1-m30n36}
The space $J_{1,30}(36)$ is zero-dimensional.
\end{lem}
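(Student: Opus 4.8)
The plan is to follow the template of the proofs of Lemmas~\ref{lem:wt1-m3n144} and~\ref{lem:wt1-m6n36}, now carrying along the extra prime $5$. Since $\gcd(30,36)=6$ has a single prime divisor congruent to $3$ modulo $4$, namely $p=3$, I would set $P=3$ and apply Proposition~\ref{prop:wt1-precise} with $m=30$, $N=36$ and $M=\tfrac1{4P}mN=90$. This presents $\dim J_{1,30}(36)$ as a sum of scalar products $\langle \nu_{30}^{\a}\nu_{l'}^{\a'},\hat1_{36}\rangle$, computed in the ring of class functions of $\SL_2(\ZZ/360\ZZ)$, with $\a(2)\a(3)\a(5)=\a(-1)=-1$, with $\a'(-1)=1$, and with $l'$ running over the divisors of $90$ (each arising as $m'/d^2$ for some $m'\mid 90$ with $90/m'$ square-free and $d^2\mid m'$, possibly for several $\a'$). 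Because $2$, $3$ and $5$ all divide $4m=120$, each such scalar product factors over the splitting $\Mpt(\ZZ/360\ZZ)\simeq\Mpt(\ZZ/8\ZZ)\times\SL_2(\ZZ/9\ZZ)\times\SL_2(\ZZ/5\ZZ)$ into a product of its $2$-, $3$- and $5$-parts, and a term contributes only when all three local factors are non-zero.

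I would dispatch the prime $5$ first. Since $5\nmid 36$ the $5$-part of $\hat1_{36}$ is trivial, while the $5$-part of $\nu_{30}^{\a}$ is $\lambda_5^{\a(5)}$ (the Galois twist in the general formula is trivial here, as $L_5(-1)\cong L_5(1)$ makes $\CC L_5(1)$ self-dual and hence $\lambda_5^{\pm}$ real-valued), which is irreducible of degree $2$ or $3$ and non-trivial. So the $5$-factor vanishes unless $5\mid l'$, and then—as $25\nmid 90$—the $5$-part of $\nu_{l'}^{\a'}$ is $\lambda_5^{\a'(5)}$, so the $5$-factor equals $\langle\lambda_5^{\a(5)}\lambda_5^{\a'(5)},1\rangle=\delta_{\a(5),\a'(5)}$. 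This leaves $l'\in\{5,10,15,30,45,90\}$ with $\a(5)=\a'(5)$. Next, exactly as in Lemmas~\ref{lem:wt1-m3n144} and~\ref{lem:wt1-m6n36}, the $3$-part of $\hat1_{36}$ is $\hat1_9$, whose non-trivial constituents all have degree at least $3$ by Lemma~\ref{lem:wt1:pmr-sl2p2}, while the $3$-part of $\nu_{30}^{\a}$ is the non-trivial character $\lambda_3^{\a(3)}$ of degree $1$ or $2$; hence the $3$-factor forces $3\mid l'$, reducing to $l'\in\{15,30,45,90\}$.

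It then remains to check that for each of these $l'$ and each admissible sign choice at least one local factor vanishes. For $l'\in\{15,30\}$ the $2$-parts of $\nu_{30}^{\a}$ and $\nu_{l'}^{\a'}$ are Galois twists (of $\vartheta_2^{\a(2)}$, and of $\vartheta_1^+$ or $\vartheta_2^{\a'(2)}$ respectively) with $\sigma(i)=\sigma'(i)=-i$: Lemma~\ref{lem:wt1:pmr-Sevs} kills the like-sign products (on which $-I$ acts non-trivially, whereas $\hat1_4$ factors through $\SL_2(\ZZ)/\{\pm I\}$), and Lemma~\ref{lem:wt1:pmr-sl264}, applied through the fact that $\hat1_4$ is a sub-character of $\hat1_{16}$, itself a sub-character of $\hat1_{32}$, with $(k,k')\in\{(2,1),(2,2)\}$, kills the mixed-sign products. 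For $l'=45$ the relevant $2$-parts are $\sigma(\vartheta_2^{\a(2)})$ with $\sigma(i)=-i$ and $\vartheta_1^+$, which are in the opposite-$i$ case of Lemma~\ref{lem:wt1:pmr-Sevs}: if $\a(2)=-1$ then $-I$ acts non-trivially on $\sigma(\vartheta_2^-)\vartheta_1^+$, and if $\a(2)=+1$ then $\sigma(\vartheta_2^+)\vartheta_1^+$ is irreducible of degree $6$ (a Galois conjugate of the degree-$6$ irreducible identified at the end of the proof of Lemma~\ref{lem:wt1-m6n36}) while $\hat1_4$ is reducible of degree $6$; either way the $2$-factor is zero. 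Finally, for $l'=90$ the $2$-parts $\sigma(\vartheta_2^{\a(2)})$ and $\sigma'(\vartheta_2^{\a'(2)})$ again have opposite $i$, so Lemma~\ref{lem:wt1:pmr-Sevs} kills every configuration with $\a(2)\neq\a'(2)$; when $\a(2)=\a'(2)$ the identity $\a(2)\a(3)\a(5)=-\a'(2)\a'(3)\a'(5)$ together with $\a(5)=\a'(5)$ forces $\a(3)=-\a'(3)$, so the $3$-factor is either $\lambda_3^-\sigma'(\lambda_9^+)$, which vanishes by Lemma~\ref{lem:wt1-m3n144}, or $\lambda_3^+\sigma'(\lambda_9^-)$, which vanishes by a direct computation with the character table of $\SL_2(\ZZ/9\ZZ)$ (the analogue, with the two signs swapped, of the identity $\langle\lambda_3^-\sigma'(\lambda_9^+),\hat1_9\rangle=0$ used in Lemma~\ref{lem:wt1-m3n144}). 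Summing over all terms then gives $J_{1,30}(36)=\{0\}$.

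The main obstacle is not a single deep point but the organization of the case analysis: since a term contributes only when its $2$-, $3$- and $5$-parts are simultaneously non-vanishing, one must run through the (finitely many, but numerous) combinations of $l'\in\{15,30,45,90\}$ with sign triples $(\a(2),\a(3),\a(5))$ of product $-1$ and compatible $\a'$, and confirm in each that one local factor drops out. The only genuinely new computational ingredient beyond the lemmas already established is the vanishing of $\langle\lambda_3^+\sigma'(\lambda_9^-),\hat1_9\rangle$ for the relevant $\sigma'$, which would be verified in GAP \cite{GAP4} exactly as the corresponding identities were in the proofs of Lemmas~\ref{lem:wt1-m3n144} and~\ref{lem:wt1-m6n36}.
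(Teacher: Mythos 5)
Your overall strategy is the paper's: apply Proposition \ref{prop:wt1-precise} with $M=90$, split each scalar product $\langle\nu_{30}^{\a}\nu_{l'}^{\a'},\hat1_{36}\rangle$ into its $2$-, $3$- and $5$-parts, and your reduction to $l'\in\{45,90\}$ and your elimination of $l'=45$ agree with the paper's. But your treatment of the prime $5$ contains a genuine error, and it leaves the decisive case $l'=90$ open. The claim that the Galois twists at $5$ are trivial because $\lambda_5^{\pm}$ is real-valued is false: real-valuedness only says that $\lambda_5^{\pm}$ is fixed by the twist $e(\tfrac15)\mapsto e(\tfrac45)$, i.e.\ by complex conjugation. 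That is indeed the twist occurring in the $5$-part of $\nu_{30}^{\a}$, since $a_5$ is the inverse of $\tfrac{4\cdot 30}{5}=24\equiv 4$ modulo $5$. But for $\nu_{90}^{\a'}$ the twist is by $a_5'\equiv 3$ (the inverse of $\tfrac{4\cdot 90}{5}\equiv 2$ modulo $5$), and the automorphism $e(\tfrac15)\mapsto e(\tfrac35)$ moves $\sqrt5$, hence interchanges the two Galois-conjugate irreducibles of $\SL_2(\ZZ/5\ZZ)$ in each relevant degree (their values lie in $\QQ(\sqrt5)$, not in $\QQ$). So the $5$-part of the $l'=90$ term is $\langle\lambda_5^{\pm}\,\sigma'(\lambda_5^{\pm}),1\rangle$ with $\sigma'(\lambda_5^{\pm})\neq\lambda_5^{\pm}$; since all the characters involved are self-dual irreducibles, this vanishes for every choice of signs. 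That is exactly how the paper disposes of $l'=90$---entirely at the prime $5$---and it is not the formula $\delta_{\a(5),\a'(5)}$ you derive.

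Because you miss this, you are forced to kill $l'=90$ at the prime $3$, and there your argument rests on the identity $\langle\lambda_3^+\sigma'(\lambda_9^-),\hat1_9\rangle=0$, which is established nowhere in the paper (the computations in Lemmas \ref{lem:wt1-m3n144} and \ref{lem:wt1-m6n36} concern the products $\lambda_3^-\lambda_9^{\pm}$, not $\lambda_3^+\lambda_9^-$) and which you defer to an unperformed GAP check; the degree and $-I$ obstructions you use elsewhere do not rule this product out, so nothing short of an actual computation would close the case as you have set it up. Moreover the constraint $\a(5)=\a'(5)$, which you use to force $\a(3)=-\a'(3)$, rests on the same false premise about the twists. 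The repair is straightforward: compute $a_5=4$ and $a_5'=3$ as above and conclude, as the paper does, that $\sigma(\lambda_5^{\pm})$ is not dual to $\sigma'(\lambda_5^{\pm})$, so the $5$-local factor already annihilates every $l'=90$ summand.
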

\begin{proof}
Apply Proposition \ref{prop:wt1-precise} with $m=30$, $N=36$ and $M=90$ to see that the dimension of $J_{1,30}(36)$ is a sum of terms
\begin{gather}\label{eqn:wt1-m30n9}
\langle \nu_{30}^\a\nu_{l'}^{\a'},\hat1_{36}\rangle
\end{gather}
where $\a(2)\a(3)\a(6)=-1$, $l'$ is a divisor of $90$ and $\a'(2)\a'(3)\a'(5)=1$. 
The $5$-part of $\hat1_{36}$ is trivial, but the $5$-part of $\nu_{30}^\a$ is not, so we may assume that that $5$-part of $\nu_{l'}^{\a'}$ is also not trivial. That is, we may assume that $5$ divides $l'$. 
The $3$-part of $\nu_{30}^\a$ is a non-trivial character of degree $1$ or $2$ so we can restrict to the case that $3$ divides $l'$ just as we did for Lemma \ref{lem:wt1-m6n36}. Also as in that proof, consideration of $2$-parts shows that $l'$ must be divisible by $9$, so $l'$ is either $45$ or $90$, and further consideration of $2$-parts rules out $l'=45$.

So we may assume that $l'=90$. Now the $5$-part of $\nu_{30}^\a\nu_{90}^{\a'}$ is $\sigma(\lambda_5^\pm)\sigma'(\lambda_5^\pm)$ where $\sigma(e(\tfrac{1}{5}))=e(\tfrac{4}{5})$ and $\sigma'(e(\tfrac{1}{5}))=e(\tfrac{3}{5})$. Inspecting the characters we see that $\sigma(\lambda_5^\pm)$ is not dual to  $\sigma'(\lambda_5^\pm)$ for these particular $\sigma$ and $\sigma'$. We conclude that $J_{1,30}(36)$ vanishes, as we required to show.
\end{proof}

We are now prepared to present the proof of Theorem \ref{thm:wt1-main}.
\begin{proof}[Proof of Theorem \ref{thm:wt1-main}]
The proof is summarized in Appendix \ref{sec:levels}, where we display each of the conjugacy classes $[g]$ of each non-trivial group $G^X$ arising in umbral moonshine, together with levels $N_g$ such that the mock Jacobi form $\phi^X_g=\sum H^X_{g,r}\th_{m,r}$ corresponding to $[g]$ has the same multiplier system as a Jacobi form of weight $1$, index $m=m^X$ and level $N=N_g$. The validity of the given $N_g$ can be verified using the explicit descriptions of the $H^X_g$ in \cite{umrec}. Inspecting the tables in \S\ref{sec:levels} we find that Lemma \ref{lem:wt1-mnot0mod8nnot0mod32mdiv1mod4ndivnocb} implies the vanishing of $J_{1,m}(N)$ for all $[g]$ except for the cases shaded in 
{yellow} or 
{orange}. The 
{orange} cases are the two classes with $X=A_8^3$ for which theta series contributions to $H^X_g$ exist. The vanishing of the $J_{1,m}(N)$ corresponding to 
{yellow} classes is obtained by applying Lemma \ref{lem:wt1-m3n144} for $X=A_2^{12}$ and $[g]\in \{3B,6B,12A\}$, Lemma \ref{lem:wt1-m6n36} for $X=D_4^6$ and $[g]\in \{3C, 6C\}$, and Lemma \ref{lem:wt1-m30n36} for $X=E_8^3$ and $[g]=3A$. For $g=e$ in any $G^X$, the corresponding level is $1$, and the vanishing of $J_{1,m}(1)$ was proven much earlier in \cite{Sko_Thesis}.
The proof of the theorem is complete.
\end{proof}

A number of cases of Theorem \ref{thm:wt1-main} can also be established using Proposition \ref{prop:wt1-expsapp}, or more refined applications of Proposition \ref{prop:wt1-exps}.

 
\section*{Acknowledgements}

We thank Ken Ono for indicating to us that holomorphic Jacobi forms of weight one should exist, we thank Ralf Schmidt and Nils-Peter Skoruppa for answering our questions about their work, and we thank Cris Poor and David Yuen for correspondence on paramodular forms. We also thank Tomoyoshi Ibukiyama and Nils-Peter Skoruppa for sharing a draft \cite{IbuSko_Cor} of their correction to \cite{MR2379341}. Although we did not use their revised results (cf. Theorem \ref{thm:ibuskocor}) in this paper, we benefited from seeing the proofs. 

The work of M.C. is supported by ERC starting grant H2020 ERC StG 2014. J.D. acknowledges support from the U.S. National Science Foundation\footnote{Any opinions, findings, and conclusions or recommendations expressed in this material are those of the author(s) and do not necessarily reflect the views of the National Science Foundation.} (DMS 1203162, DMS 1601306), and from the Simons Foundation (\#316779). J.H. also acknowledges support from the U.S. National Science Foundation (PHY 1520748), and from the Simons Foundation (\#399639).

\clearpage

\appendix

\begin{table}
\section{Levels}\label{sec:levels}

\begin{center}
\caption{Conjugacy classes and levels at $\ll=2$, $\rs=A_1^{24}$}\label{tab:chars:eul:2}
\smallskip
\begin{tabular}{c|c c c c cccccccccc}
\toprule
$[g]$	&1A	&2A	&2B	&3A	&3B	&4A	&4B	&4C	&5A	&6A	&6B	\\
	\midrule
$n_g|h_g$&$1|1$&$2|1$&${2|2}$&$3|1$&$3|3$&$4|2$&$4|1$&${4|4}$&$5|1$&$6|1$&$6|6$\\
$N_g$&$1$&$2$&${4}$&$3$&$9$&$8$&$4$&${16}$&$5$&$6$&$36$\\
\bottomrule
\end{tabular}
\begin{tabular}{c|c c c c ccccccccccc}
\toprule
$[g]$	& 7AB	&8A	&10A	&11A&12A	&12B	&14AB	&15AB	&21AB	&23AB	\\
	\midrule
$n_g|h_g$&$7|1$&$8|1$&$10|2$&$11|1$&$12|2$&$12|12$&$14|1$&$15|1$&$21|3$&$23|1$\\
$N_g$&$7$&$8$&$20$&$11$&$24$&$144$&$14$&$15$&$63$&$23$\\
\bottomrule
\end{tabular}
\end{center}

\begin{center}
\caption{Conjugacy classes and levels at $\ll=3$, $\rs=A_2^{12}$}\label{tab:chars:eul:3}
\smallskip
\begin{tabular}{c|ccccccc>{\columncolor{yellow}}c>{\columncolor{yellow}}ccccc}
\toprule
$[g]$	&1A&   		2A&   		4A&   		2B&   		2C&   		3A&   		6A&   		3B&   		6B&   		4B& 	  		4C	\\
	\midrule
$n_g|h_g$&$1|1$&$1|4$&${2|8}$&$2|1$&$2|2$&$3|1$&$3|4$&${3|3}$&${3|12}$&$4|2$&$4|1$\\
$N_g$&$1$&$4$&$16$&$2$&$4$&$3$&$12$&$9$&$36$&$8$&$4$\\
\bottomrule
\end{tabular}
\begin{tabular}{c|cc>{\columncolor{yellow}}cccccccc}
\toprule
$[g]$	& 5A&   		10A&   		12A&   		6C&   		6D&   		8AB&   	 	8CD&   		20AB&   		11AB&   		22AB\\
	\midrule
$n_g|h_g$&$5|1$&$5|4$&$6|24$&$6|1$&$6|2$&$8|4$&$8|1$&${10|8}$&$11|1$&$11|4$\\
$N_g$&$5$&$20$&$144$&$6$&$12$&$32$&$8$&$80$&$11$&$44$\\
\bottomrule
\end{tabular}
\end{center}

\begin{center}
\caption{Conjugacy classes and levels at $\ll=4$, $\rs=A_3^8$}\label{tab:chars:eul:4}
\smallskip
\begin{tabular}{c|ccccccccccccc}\toprule
$[g]$&   		1A&   2A&   	2B&   	4A&			4B&			2C&   	3A&   	6A&   		6BC&   	8A&   	4C&   	7AB&   	14AB\\ 
	\midrule
$n_g|h_g$&$1|1$& $1|2$&	$2|2$&	$2|4$&			${4|{4}}$&	$2|1$& 	$3|1$& 	$3|2$&		$6|2$&	${4|{8}}$&		$4|1$&  	$7|1$&	$7|2$\\	
$N_g$&$1$& $2$&	$4$&	$8$&			$16$&	$2$& 	$3$& 	$6$&		$12$&	$32$&		$4$&  	$7$&	$14$
\\\bottomrule
\end{tabular}
\end{center}

\begin{center}
\caption{Conjugacy classes and levels at $\ll=5$, $\rs=A_4^6$}\label{tab:chars:eul:5}
\smallskip
\begin{tabular}{c|ccccccccccccc}\toprule
$[g]$&   		1A&		2A&   	2B&   	2C&			3A&			6A&   	5A&   	10A&   		4AB&   	4CD&	12AB\\ 
	\midrule
$n_g|h_g$&		$1|1$&	$1|4$&	$2|2$&	$2|1$&		$3|3$&		$3|12$&	$5|1$&	$5|4$&		$2|8$&		$4|1$&	$6|24$	\\	
$N_g$&		$1$&	$4$&	$4$&	$2$&		$9$&		$36$&	$5$&	$20$&		$16$&		$4$&	$144$	\\	
\bottomrule
\end{tabular}
\end{center}
\end{table}

\begin{table}
\begin{center}
\caption{Conjugacy classes and levels at $\ll=6$, $\rs=A_5^4D_4$}\label{tab:chars:eul:6}
\smallskip
\begin{tabular}{c|ccccccc}\toprule
$[g]$&   		1A&		2A&   	2B&   	4A&			3A&			6A&   	8AB\\ 
	\midrule
$n_g|h_g$&		$1|1$&	$1|2$&	$2|1$&	$2|2$&		$3|1$&		$3|2$&	$4|2$\\	
$N_g$&		$1$&	$2$&	$2$&	$4$&		$3$&		$6$&	$8$
	\\\bottomrule
\end{tabular}
\end{center}

\begin{center}
\caption{Conjugacy classes and levels at $\ll=6+3$, $\rs=D_4^6$}\label{tab:chars:eul:6+3}
\smallskip
\begin{tabular}{c|ccccc>{\columncolor{yellow}}ccccccccc>{\columncolor{yellow}}c}\toprule
$[g]$&   		1A&		3A&   	2A&   	6A&			3B&			3C&   	4A&	12A&	5A&	15AB&	2B&	2C&	4B&	6B&	6C\\ 
	\midrule
$n_g|h_g$&		$1|1$&	$1|3$&	$2|1$&	$2|3$&		$3|1$&		$3|3$&	$4|2$&	$4|6$&	$5|1$&	$5|3$&	$2|1$&	$2|2$&	$4|1$&	$6|1$&	$6|6$\\	
$N_g$&		$1$&	$3$&	$2$&	$6$&		$3$&		$9$&	$8$&	$24$&	$5$&	$15$&	$2$&	$4$&	$4$&	$6$&	$36$\\	
\bottomrule
\end{tabular}
\end{center}

\begin{center}
\caption{\label{tab:FrmG7fp}
Conjugacy classes and levels at $\ll=7$, $\rs=A_6^4$}\label{tab:chars:eul:7}
\smallskip
\begin{tabular}{c|ccccc}
\toprule
$[g]$&   1A&   2A&   4A&   3AB&   6AB\\ 
	\midrule
$n_g|h_g$&		$1|1$&	$1|4$&	$2|8$&	$3|1$&	$3|4$\\
$N_g$&		$1$&	$4$&	$16$&	$3$&	$12$
\\
\bottomrule
\end{tabular}
\end{center}

\begin{center}
\caption{Conjugacy classes and levels at $\ll=8$, $\rs=A_7^2D_5^2$}\label{tab:chars:eul:8}
\smallskip
\begin{tabular}{c|ccccc}\toprule
	$[g]$&	1A&	2A&	2B&2C&4A\\
		\midrule
$n_g|h_g$&		$1|1$&$1|2$&${2|1}$&$2|1$&${2|4}$\\	
$N_g$&		$1$&$2$&$2$&$2$&$8$
\\\bottomrule
\end{tabular}
\end{center}

\begin{center}
\caption{Conjugacy classes and levels at $\ll=9$, $\rs=A_8^3$}\label{tab:chars:ccl:9}
\smallskip
\begin{tabular}{c|cccc>{\columncolor{orange}}c>{\columncolor{orange}}c}\toprule
	$[g]$&	1A&	2A&	2B&2C&3A&6A\\
		\midrule
$n_g|h_g$&		$1|1$&$1|4$&${2|1}$&$2|2$&$3|3$&$3|12$\\	
$N_g$&		$1$&$4$&${2}$&$4$&$9$&$36$	\\\bottomrule
\end{tabular}
\end{center}

\begin{center}
\caption{Conjugacy classes and levels at $\ll=10$, $\rs=A_9^2D_6$}\label{tab:chars:eul:10}
\smallskip
\begin{tabular}{c|ccc}\toprule
	$[g]$&	1A&	2A&	4AB\\
		\midrule
$n_g|h_g$&		$1|1$&$1|2$&${2|2}$\\	
$N_g$&		$1$&$2$&$4$
	\\\bottomrule
\end{tabular}
\end{center}
\end{table}

\begin{table}
\begin{center}
\caption{Conjugacy classes and levels at $\ll=10+5$, $\rs=D_6^4$}\label{tab:chars:eul:10+5}
\smallskip
\begin{tabular}{c|ccccc}\toprule
	$[g]$&	1A&	2A&	3A&2B&4A\\
		\midrule
	$n_g|h_g$&$1|1$&$2|2$&$3|1$&$2|1$&$4|4$\\
	$N_g$&$1$&$4$&$3$&$2$&$16$
	\\\bottomrule
\end{tabular}
\end{center}
\begin{center}
\caption{Conjugacy classes and levels at $\ll=12$, $\rs=A_{11}D_7E_6$}\label{tab:chars:eul:12}
\smallskip
\begin{tabular}{c|cc}\toprule
	$[g]$&	1A&	2A\\
		\midrule
	$n_g|h_g$&		$1|1$&$1|2$\\	
	$N_g$&		$1$&$2$
\\\bottomrule
\end{tabular}
\end{center}
\begin{center}
\caption{Conjugacy classes and levels at $\ll=12+4$, $\rs=E_6^4$}\label{tab:chars:eul:12+4}
\smallskip
\begin{tabular}{c|ccccccc}\toprule
$[g]$&   		1A&		2A&   	2B&   	4A&			3A&			6A&   	8AB\\ 
	\midrule
$n_g|h_g$&	$1|1$&$1|2$&$2|1$&$2|4$&$3|1$&$3|2$&$4|8$\\
$n_g|h_g$&	$1$&$2$&$2$&$8$&$3$&$6$&$32$
	\\\bottomrule
\end{tabular}
\end{center}
\begin{center}
\caption{Conjugacy classes and levels at $\ll=13$, $\rs=A_{12}^2$}\label{tab:chars:eul:13}
\smallskip
\begin{tabular}{c|ccc}\toprule
	$[g]$&	1A&	2A&	4AB\\
		\midrule
$n_g|h_g$&		$1|1$&$1|4$&${2|8}$\\	
$N_g$&		$1$&$4$&${16}$
	\\\bottomrule
\end{tabular}
\end{center}
\begin{center}
\caption{Conjugacy classes and levels at $\ll=14+7$, $\rs=D_8^3$}\label{tab:chars:eul:14+7}
\smallskip
\begin{tabular}{c|ccc}\toprule
	$[g]$&	1A&	2A&	3A\\
		\midrule
	$n_g|h_g$&$1|1$&$2|1$&$3|3$\\
	$N_g$&$1$&$2$&$9$
	\\\bottomrule
\end{tabular}
\end{center}
\begin{center}
\caption{Conjugacy classes and levels at $\ll=16$, $\rs=A_{15}D_9$}\label{tab:chars:eul:16}
\smallskip
\begin{tabular}{c|cc}\toprule
	$[g]$&	1A&	2A\\
		\midrule
	$n_g|h_g$&$1|1$&$1|2$\\
	$N_g$&$1$&$2$
\\\bottomrule
\end{tabular}
\end{center}
\end{table}

\begin{table}

\begin{center}
\caption{Conjugacy classes and levels at $\ll=18$, $\rs=A_{17}E_7$}\label{tab:chars:eul:18}
\smallskip
\begin{tabular}{c|cc}\toprule
	$[g]$&	1A&	2A\\
		\midrule
	$n_g|h_g$&$1|1$&$1|2$\\
	$N_g$&$1$&$2$
\\\bottomrule
\end{tabular}
\end{center}
\begin{center}
\caption{Conjugacy classes and levels at $\ll=18+9$, $\rs=D_{10}E_7^2$}\label{tab:chars:eul:18+9}
\smallskip
\begin{tabular}{c|cc}\toprule
	$[g]$&	1A&	2A\\
		\midrule
	$n_g|h_g$&$1|1$&$2|1$\\
	$N_g$&$1$&$2$
\\\bottomrule
\end{tabular}
\end{center}
\begin{center}
\caption{Conjugacy classes and levels at $\ll=22+11$, $\rs=D_{12}^2$}\label{tab:chars:eul:22+11}
\smallskip
\begin{tabular}{c|cc}\toprule
	$[g]$&	1A&	2A\\
		\midrule
		$n_g|h_g$&$1|1$&$2|2$\\
		$N_g$&$1$&$2$
	\\\bottomrule
\end{tabular}
\end{center}
\begin{center}
\caption{Conjugacy classes and levels at $\ll=25$, $\rs=A_{24}$}\label{tab:chars:eul:25}
\smallskip
\begin{tabular}{c|cc}\toprule
	$[g]$&	1A&	2A\\
		\midrule
	$n_g|h_g$&	$1|1$&	$1|4$\\
	$N_g$&	$1$&	$4$
\\\bottomrule
\end{tabular}
\end{center}

\begin{center}
\caption{Conjugacy classes and levels at $\ll=30+6,10,15$, $\rs=E_8^3$}\label{tab:chars:eul:30+6,10,15}
\smallskip
\begin{tabular}{c|cc>{\columncolor{yellow}}c}\toprule
$[g]$&   		1A&		2A&   	3A\\ 
	\midrule
	$n_g|h_g$&	$1|1$&	$2|1$&	$3|3$\\
	$N_g$&	$1$&	$2$&	$9$
	\\\bottomrule
\end{tabular}
\end{center}
\end{table}

\clearpage

\section{Characters at $\ell=9$}\label{sec:chars}

\begin{table}[ht]
\begin{center}
\caption{Character table of ${G}^{\rs}\simeq \Dih_6$, $\rs=A_8^3$}\label{tab:chars:irr:9}
\smallskip
\begin{tabular}{c|c|rrrrrr}\toprule
$[g]$&FS&   1A&   2A&   2B&   2C&   3A&    6A\\
	\midrule
$[g^2]$&&	1A&	1A&	1A&	1A&	3A&	3A\\
$[g^3]$&&	1A&	2A&	2B&	2C&	1A&	2A\\
	\midrule
$\chi_1$&$+$&   $1$&   $1$&   $1$&   $1$&   $1$&   $1$\\
$\chi_2$&$+$&   $1$&   $1$&   $-1$&   $-1$&   $1$&   $1$\\
$\chi_3$&$+$&   $2$&   $2$&   $0$&   $0$&   $-1$&   $-1$\\
$\chi_4$&$+$&   $1$&   $-1$&   $-1$&   $1$&   $1$&   $-1$\\
$\chi_5$&$+$&   $1$&   $-1$&   $1$&   $-1$&   $1$&   $-1$\\
$\chi_6$&$+$&   $2$&   $-2$&   $0$&   $0$&   $-1$&   $1$\\\bottomrule
\end{tabular}
\end{center}
\end{table}

\begin{table}[ht]
\begin{center}
\caption{Twisted Euler characters and Frame shapes at $\ll=9$, $\rs=A_8^3$}\label{tab:chars:eul:9}
\begin{tabular}{l|rrrrrr}\toprule
	$[g]$&	1A&	2A&	2B&2C&3A&6A\\
		\midrule
$n_g|h_g$&		$1|1$&$1|4$&${2|1}$&$2|2$&$3|3$&$3|12$\\	
		\midrule
	$\bar{\chi}^{\rs_A}_{g}$	&3&3&1&1&0&0\\
	$\chi^{\rs_A}_{g}$		&3&-3&1&-1&0&0\\
		\midrule
	$\bar{\Pi}^{\rs_A}_{g}$	&$1^3$&$1^3$&$1^12^1$&$1^12^1$&$3^1$&$3^1$\\
	$\widetilde{\Pi}^{\rs_A}_{g}$&$1^{24}$&${2^{12}}$&$1^82^8$&$2^{12}$&$3^8$&${6^4}$
	\\\bottomrule
\end{tabular}
\end{center}
\end{table}

\clearpage

\begin{table}
\section{Coefficients at $\ell=9$}\label{sec:coeffs}
\vspace{-8pt}
\begin{minipage}[b]{0.49\linewidth}
\centering
\caption{\label{tab:coeffs:9_1}
$H^{(9)}_{g,1}$, $\rs=A_8^3$}\smallskip
\begin{tabular}{r|rrrrrr}\toprule
$[g]$	&1A	&2A	&2B	&2C	&3A	&6A	\\
\midrule
	$\G_g$&		$1$&$1|4$&$2$&$2|2$&	${3|3}$&	${3|12}$\\	
\midrule
-1& -2& -2& -2& -2& -2& -2\\
35& 0& 0& 0& 0& 0& 0\\
71& 4& 4& 0& 0& -2&-2\\
107& 4& 4& 0& 0& -2& -2\\
143& 6& 6& -2& -2& 0& 0\\
179& 12& 12& 0&0& 0& 0\\
215& 16& 16& 0& 0& -2& -2\\
251& 20& 20& 0& 0& 2& 2\\
287& 30&30& -2& -2& 0& 0\\
323& 40& 40& 0& 0& -2& -2\\
359& 54& 54& 2& 2& 0& 0\\
395& 72& 72& 0& 0& 0& 0\\
431& 92& 92& 0& 0& -4& -4\\
467& 116& 116& 0& 0&2& 2\\
503& 156& 156& 0& 0& 0& 0\\
539& 196& 196& 0& 0& -2& -2\\
575& 242&242& -2& -2& 2& 2\\
611& 312& 312& 0& 0& 0& 0\\
647& 388& 388& 0& 0& -2&-2\\
683& 476& 476& 0& 0& 2& 2\\
719& 594& 594& -2& -2& 0& 0\\
755& 728&728& 0& 0& -4& -4\\
791& 890& 890& 2& 2& 2& 2\\
827& 1092& 1092& 0& 0& 0&0\\
863& 1322& 1322& -2& -2& -4& -4\\
899& 1600& 1600& 0& 0& 4& 4\\
935&1938& 1938& 2& 2& 0& 0\\
971& 2324& 2324& 0& 0& -4& -4\\
1007& 2782& 2782&-2& -2& 4& 4\\
1043& 3336& 3336& 0& 0& 0& 0\\
1079& 3978& 3978& 2& 2& -6&-6\\
1115& 4720& 4720& 0& 0& 4& 4\\
1151& 5610& 5610& -2& -2& 0& 0\\
1187&6636& 6636& 0& 0& -6& -6\\
1223& 7830& 7830& 2& 2& 6& 6\\
1259& 9236& 9236&0& 0& 2& 2\\
1295& 10846& 10846& -2& -2& -8& -8\\
1331& 12720& 12720& 0& 0&6& 6\\
\bottomrule
\end{tabular}
\end{minipage}
\begin{minipage}[b]{0.49\linewidth}
\centering
\caption{\label{tab:coeffs:9_2}$H^{(9)}_{g,2}$, $\rs=A_8^3$}\smallskip
\begin{tabular}{r|rrrrrr}
\toprule
$[g]$	&1A	&2A	&2B	&2C	&3A	&6A	\\
\midrule
	$\G_g$&		$1$&$1|4$&$2$&$2|2$&	${3|3}$&	${3|12}$\\	
\midrule
32& 4& -4& 0& 0& -2& 2\\     
68& 6& -6& 2& -2& 0& 0\\     
104& 8& -8& 0& 0& 2& -2\\     
140& 16& -16& 0& 0& -2& 2\\     
176& 18& -18& -2& 2& 0& 0\\     
212& 30& -30&2& -2& 0& 0\\     
248& 40& -40& 0& 0& -2& 2\\    
284& 54& -54& 2& -2& 0& 0\\     
320& 74& -74& -2& 2& 2& -2\\     
356& 102& -102& 2& -2& 0& 0\\     
392& 126& -126& -2& 2& 0& 0\\     
428& 174& -174& 2& -2& 0& 0\\     
464& 218& -218& -2& 2& -4& 4\\     
500& 284& -284& 4& -4& 2& -2\\     
536& 356& -356& -4& 4& 2& -2\\     
572& 460& -460& 4& -4& -2& 2\\    
 608& 564& -564& -4& 4& 0& 0\\     
644& 716& -716& 4& -4& 2& -2\\     
680& 884& -884& -4& 4& -4& 4\\     
716& 1098& -1098& 6& -6& 0& 0\\     
752& 1342& -1342& -6& 6& 4& -4\\     
788& 1658& -1658& 6& -6& -4& 4\\     
824& 2000& -2000& -8& 8& 2& -2\\     
860& 2456& -2456& 8& -8& 2& -2\\     
896& 2960& -2960& -8& 8& -4& 4\\     
932& 3582& -3582& 10& -10& 0& 0\\    
 968& 4294& -4294& -10& 10& 4& -4\\     
1004& 5174& -5174& 10& -10& -4& 4\\     
1040& 6156& -6156& -12& 12& 0& 0\\     
1076& 7378& -7378& 14& -14& 4& -4\\     
1112& 8748& -8748& -12& 12& -6& 6\\     
1148& 10400& -10400& 16& -16& 2& -2\\     
1184& 12280& -12280& -16& 16& 4& -4\\     
1220& 14544& -14544& 16& -16& -6& 6\\     
1256& 17084& -17084& -20& 20& 2& -2\\     
1292& 20140& -20140& 20& -20& 4& -4\\     
1328& 23590& -23590& -22& 22& -8& 8\\     
1364& 27656& -27656& 24& -24& 2& -2\\\bottomrule
\end{tabular}
\end{minipage}
\end{table}

\begin{table}
\begin{minipage}[b]{0.49\linewidth}
\centering
\caption{\label{tab:coeffs:9_3}$H^{(9)}_{g,3}$, $\rs=A_8^3$}\smallskip
\begin{tabular}{r|@{ }r@{ }r@{ }r@{ }r@{ }r@{ }r}
\toprule
$[g]$	&1A	&2A	&2B	&2C	&3A	&6A	\\
\midrule
	$\G_g$&		$1$&$1|4$&$2$&$2|2$&	${3|3}$&	${3|12}$\\	
\midrule
27& 4& 4& 0& 0& -2& -2\\
63& 6& 6& -2& -2& 0& 0\\99& 12& 12& 0& 0& 0&0\\135& 18& 18& 2& 2& 0& 0\\171& 24& 24& 0& 0& 0& 0\\207& 36& 36& 0&0& 0& 0\\
243& 52& 52& 0& 0& -2& -2\\
279& 72& 72& 0& 0& 0& 0\\315& 96&96& 0& 0& 0& 0\\351& 126& 126& -2& -2& 0& 0\\387& 168& 168& 0& 0& 0& 0\\
423& 222& 222& 2& 2& 0& 0\\459& 288& 288& 0& 0& 0& 0\\495& 366&366& -2& -2& 0& 0\\531& 468& 468& 0& 0& 0& 0\\
567& 594& 594& 2& 2& 0& 0\\603& 744& 744& 0& 0& 0& 0\\639& 930& 930& -2& -2& 0& 0\\
675& 1156&1156& 0& 0& -2& -2\\
711& 1434& 1434& 2& 2& 0& 0\\747& 1764& 1764& 0& 0& 0& 0\\783& 2160& 2160& 0& 0& 0& 0\\819& 2640& 2640& 0& 0& 0& 0\\
855& 3210& 3210& 2& 2& 0& 0\\891& 3888& 3888& 0& 0& 0& 0\\
927& 4692& 4692& -4& -4& 0& 0\\963& 5652& 5652& 0& 0& 0& 0\\
999& 6786& 6786& 2& 2&0& 0\\1035& 8112& 8112& 0& 0& 0& 0\\1071& 9678& 9678& -2& -2& 0& 0\\
1107& 11520& 11520& 0& 0& 0& 0\\1143& 13674& 13674& 2& 2& 0& 0\\
1179& 16188& 16188& 0& 0& 0& 0\\1215& 19116& 19116& -4& -4& 0&  0\\
1251& 22536& 22536& 0& 0& 0& 0\\1287& 26508& 26508& 4& 4& 0& 0\\
1323& 31108& 31108& 0& 0& -2& -2\\
1359& 36438& 36438& -2& -2& 0& 0\\
\bottomrule
\end{tabular}
\end{minipage}
\begin{minipage}[b]{0.49\linewidth}
\centering
\caption{\label{tab:coeffs:9_4}$H^{(9)}_{g,4}$, $\rs=A_8^3$}\smallskip
\begin{tabular}{r|@{ }r@{ }r@{ }r@{ }r@{ }r@{ }r}
\toprule
$[g]$	&1A	&2A	&2B	&2C	&3A	&6A	\\
\midrule
	$\G_g$&		$1$&$1|4$&$2$&$2|2$&	${3|3}$&	${3|12}$\\	
\midrule
20& 2& -2& -2& 2& 2& -2\\56& 8& -8& 0& 0& 2& -2\\92& 10& -10& -2& 2& -2&2\\128& 18& -18& 2& -2& 0& 0\\164& 26& -26& -2& 2& 2& -2\\200& 42& -42& 2& -2& 0& 0\\236& 50& -50& -2& 2& 2& -2
\\272& 78& -78& 2& -2& 0& 0\\308& 100& -100& -4& 4& -2& 2\\344& 140& -140& 4& -4& 2& -2\\380&180& -180& -4& 4& 0& 0\\416& 244& -244& 4& -4& -2& 2\\452&302& -302& -6& 6& 2& -2\\488& 404& -404& 4& -4& 2& -2\\524&502& -502& -6& 6& -2& 2\\560& 648& -648& 8& -8& 0& 0\\
596&806& -806& -6& 6& 2& -2\\632& 1024& -1024& 8& -8& -2& 2\\668&1250& -1250& -10& 10& 2& -2\\704& 1574& -1574& 10& -10& 2& -2\\
740& 1916& -1916& -12& 12& -4& 4\\
776& 2372& -2372& 12& -12& 2& -2\\812& 2876& -2876& -12& 12& 2& -2\\848& 3530& -3530& 14& -14& -4& 4\\
884& 4240& -4240& -16& 16& 4& -4\\920& 5168& -5168& 16& -16& 2& -2\\
956& 6186& -6186& -18& 18& -6& 6\\992& 7460& -7460& 20& -20& 2& -2\\
1028& 8894& -8894& -22& 22& 2& -2\\1064& 10664& -10664& 24& -24& -4& 4\\1100& 12634& -12634& -26& 26& 4& -4\\
1136& 15070& -15070& 26& -26& 4& -4\\
1172& 17790& -17790& -30& 30& -6& 6\\1208& 21080& -21080& 32& -32& 2& -2\\1244& 24794& -24794& -34& 34& 2& -2\\
1280& 29250& -29250& 38& -38& -6& 6\\
1316& 34248& -34248& -40& 40&6& -6\\1352& 40234& -40234& 42& -42& 4& -4\\
\bottomrule
\end{tabular}
\end{minipage}
\end{table}

\begin{table}
\begin{minipage}[b]{0.49\linewidth}
\centering
\caption{\label{tab:coeffs:9_5}$H^{(9)}_{g,5}$, $\rs=A_8^3$}\smallskip
\begin{tabular}{r|@{ }r@{ }r@{ }r@{ }r@{ }r@{ }r}
\toprule
$[g]$	&1A	&2A	&2B	&2C	&3A	&6A	\\
\midrule
	$\G_g$&		$1$&$1|4$&$2$&$2|2$&	${3|3}$&	${3|12}$\\	
\midrule
11&4&4&0&0&-2&-2\\ 47&6&6&2&2&0&0\\ 83&12&12&0&0&0&0\\ 119&16&16&0&0&-2&-2\\ 155&24&24&0&0&0&0\\ 191&36&36&0&0&0&0\\ 227&52&52&0&0&-2&-2\\ 263&68&68&0&0&2&2\\ 299&96&96&0&0&0&0\\ 335&130&130&2&2&-2&-2\\ 371&168&168&0&0&0&0\\ 407&224&224&0&0&2&2\\ 443&292&292&0&0&-2&-2\\ 479&372&372&0&0&0&0\\ 515&480&480&0&0&0&0\\ 551&608&608&0&0&-4&-4\\ 587&764&764&0&0&2&2\\ 623&962&962&2&2&2&2\\ 659&1196&1196&0&0&-4&-4\\ 695&1478&1478&-2&-2&2&2\\ 731&1832&1832&0&0&2&2\\ 767&2248&2248&0&0&-2&-2\\ 803&2744&2744&0&0&2&2\\ 839&3348&3348&0&0&0&0\\ 875&4064&4064&0&0&-4&-4\\ 911&4910&4910&2&2&2&2\\ 947&5924&5924&0&0&2&2\\ 983&7116&7116&0&0&-6&-6\\ 1019&8516&8516&0&0&2&2\\ 1055&10184&10184&0&0&2&2\\ 1091&12132&12132&0&0&-6&-6\\ 1127&14404&14404&0&0&4&4\\ 1163&17084&17084&0&0&2&2\\ 1199&20202&20202&2&2&-6&-6\\ 1235&23824&23824&0&0&4&4\\ 1271&28054&28054&-2&-2&4&4\\ 1307&32956&32956&0&0&-8&-8\\ 1343&38626&38626&2&2&4&4\\
\bottomrule
\end{tabular}
\end{minipage}
\begin{minipage}[b]{0.49\linewidth}
\centering
\caption{\label{tab:coeffs:9_6}$H^{(9)}_{g,6}$, $\rs=A_8^3$}\smallskip
\begin{tabular}{r|r@{ }r@{ }r@{ }r@{ }r@{ }r}
\toprule
$[g]$	&1A	&2A	&2B	&2C	&3A	&6A	\\
\midrule
	$\G_g$&		$1$&$1|4$&$2$&$2|2$&	${3|3}$&	${3|12}$\\	
\midrule
  0&1&-1&-1&1&1&-1\\ 
  36&6&-6&2&-2&0&0\\ 72&6&-6&-2&2&0&0\\ 
  108&14&-14&2&-2&2&-2\\ 
  144&18&-18&-2&2&0&0\\ 180&30&-30&2&-2&0&0\\ 
  216&36&-36&-4&4&0&0\\ 252&60&-60&4&-4&0&0\\ 288&72&-72&-4&4&0&0\\ 324&108&-108&4&-4&0&0\\ 
  360&132&-132&-4&4&0&0\\396&186&-186&6&-6&0&0\\ 
  432&230&-230&-6&6&2&-2\\ 
  468&312&-312&8&-8&0&0\\ 504&384&-384&-8&8&0&0\\ 540&504&-504&8&-8&0&0\\ 576&618&-618&-10&10&0&0\\ 612&798&-798&10&-10&0&0\\ 648&972&-972&-12&12&0&0\\ 684&1236&-1236&12&-12&0&0\\ 720&1494&-1494&-14&14&0&0\\ 756&1872&-1872&16&-16&0&0\\ 792&2256&-2256&-16&16&0&0\\ 828&2790&-2790&18&-18&0&0\\ 864&3348&-3348&-20&20&0&0\\ 900&4098&-4098&22&-22&0&0\\ 936&4896&-4896&-24&24&0&0\\ 
  972&5942&-5942&26&-26&2&-2\\ 
  1008&7068&-7068&-28&28&0&0\\ 1044&8514&-8514&30&-30&0&0\\ 1080&10080&-10080&-32&32&0&0\\ 1116&12060&-12060&36&-36&0&0\\ 1152&14226&-14226&-38&38&0&0\\ 1188&16920&-16920&40&-40&0&0\\ 1224&19884&-19884&-44&44&0&0\\ 1260&23520&-23520&48&-48&0&0\\ 1296&27540&-27540&-52&52&0&0\\ 1332&32424&-32424&56&-56&0&0\\
\bottomrule
\end{tabular}
\end{minipage}
\end{table}

\begin{table}
\begin{minipage}[b]{0.49\linewidth}
\centering
\caption{\label{tab:coeffs:9_7}$H^{(9)}_{g,7}$, $\rs=A_8^3$}\smallskip
\begin{tabular}{r|@{ }r@{ }r@{ }r@{ }r@{ }r@{ }r}
\toprule
$[g]$	&1A	&2A	&2B	&2C	&3A	&6A	\\
\midrule
	$\G_g$&		$1$&$1|4$&$2$&$2|2$&	${3|3}$&	${3|12}$\\	
\midrule
  23&2&2&-2&-2&2&2\\ 59&4&4&0&0&-2&-2\\ 95&8&8&0&0&2&2\\ 131&12&12&0&0&0&0\\ 167&18&18&-2&-2&0&0\\ 203&24&24&0&0&0&0\\ 239&38&38&2&2&2&2\\ 275&52&52&0&0&-2&-2\\ 311&66&66&-2&-2&0&0\\ 347&92&92&0&0&2&2\\ 383&124&124&0&0&-2&-2\\ 419&156&156&0&0&0&0
  \\ 455&206&206&-2&-2&2&2\\ 491&268&268&0&0&-2&-2\\ 527&338&338&2&2&2&2\\ 563&428&428&0&0&2&2\\ 599&538&538&-2&-2&-2&-2\\ 635&672&672&0&0&0&0\\ 671&842&842&2&2&2&2\\ 707&1040&1040&0&0&-4&-4\\ 743&1274&1274&-2&-2&2&2\\ 779&1568&1568&0&0&2&2\\ 815&1922&1922&2&2&-4&-4\\ 851&2328&2328&0&0&0&0\\ 887&2824&2824&-4&-4&4&4\\ 923&3416&3416&0&0&-4&-4\\ 959&4106&4106&2&2&2&2\\ 995&4936&4936&0&0&4&4\\ 1031&5906&5906&-2&-2&-4&-4\\ 1067&7040&7040&0&0&2&2\\ 1103&8392&8392&4&4&4&4\\ 1139&9960&9960&0&0&-6&-6
  \\ 1175&11784&11784&-4&-4&0&0\\ 1211&13936&13936&0&0&4&4\\ 1247&16434&16434&2&2&-6&-6\\ 1283&19316&19316&0&0&2&2\\ 1319&22680&22680&-4&-4&6&6\\	
\bottomrule
\end{tabular}
\end{minipage}
\begin{minipage}[b]{0.49\linewidth}
\centering
\caption{\label{tab:coeffs:9_8}$H^{(9)}_{g,8}$, $\rs=A_8^3$}\smallskip
\begin{tabular}{r|@{ }r@{ }r@{ }r@{ }r@{ }r@{ }r}
\toprule
$[g]$	&1A	&2A	&2B	&2C	&3A	&6A	\\
\midrule
	$\G_g$&		$1$&$1|4$&$2$&$2|2$&	${3|3}$&	${3|12}$\\	
\midrule
8&2&-2&2&-2&2&-2\\ 44&2&-2&-2&2&2&-2\\ 80&6&-6&2&-2&0&0\\ 116&2&-2&-2&2&2&-2\\ 152&12&-12&4&-4&0&0\\ 188&10&-10&-2&2&-2&2\\ 224&20&-20&4&-4&2&-2\\ 260&20&-20&-4&4&2&-2\\ 296&36&-36&4&-4&0&0\\ 332&38&-38&-6&6&2&-2\\ 368&66&-66&6&-6&0&0\\ 404&70&-70&-6&6&-2&2\\ 440&104&-104&8&-8&2&-2\\ 476&120&-120&-8&8&0&0\\ 512&172&-172&8&-8&-2&2\\ 548&194&-194&-10&10&2&-2\\ 584&276&-276&12&-12&0&0\\ 620&316&-316&-12&12&-2&2\\ 656&418&-418&14&-14&4&-4\\ 692&494&-494&-14&14&2&-2\\ 728&640&-640&16&-16&-2&2\\ 764&746&-746&-18&18&2&-2\\ 800&960&-960&20&-20&0&0\\ 836&1124&-1124&-20&20&-4&4\\ 872&1408&-1408&24&-24&4&-4\\ 908&1658&-1658&-26&26&2&-2\\ 944&2054&-2054&26&-26&-4&4\\ 980&2398&-2398&-30&30&4&-4\\ 1016&2952&-2952&32&-32&0&0\\ 1052&3450&-3450&-34&34&-6&6\\ 1088&4186&-4186&38&-38&4&-4\\ 1124&4898&-4898&-42&42&2&-2\\ 1160&5892&-5892&44&-44&-6&6\\ 1196&6864&-6864&-48&48&6&-6\\ 1232&8220&-8220&52&-52&0&0\\ 1268&9558&-9558&-54&54&-6&6\\ 1304&11348&-11348&60&-60&8&-8\\
\bottomrule
\end{tabular}
\end{minipage}
\end{table}

\clearpage

\begin{table}
\section{Decompositions at $\ell=9$}\label{sec:decompositions}
\begin{minipage}[t]{0.5\linewidth}
 \begin{center}
     \caption{  $K^{(9)}_1$, $X=A_8^{3}  $}\label{tab:dec:9_1}
    \vspace{0.1cm}
 \begin{tabular}{cc|RRRR}
  \toprule  && \chi_{1}& \chi_{2}& \chi_{3} \\\midrule
& -1& -2& 0& 0\\
 &35& 0& 0& 0\\ 
&71& 0& 0& 2\\ 
&107& 0& 0& 2\\ 
&143& 0& 2& 2\\ 
&179& 2& 2& 4\\ 
&215& 2& 2& 6\\ 
&251& 4& 4& 6\\ 
&287& 4& 6& 10\\ 
&323& 6& 6& 14
\\\bottomrule
\end{tabular}\end{center}
 \begin{center}
     \caption{  $K^{(9)}_5$, $X=A_8^{3}  $}\label{tab:dec:9_5}
    \vspace{0.1cm}

 \begin{tabular}{c|RRRR}
 \toprule  & \chi_{1}& \chi_{2}& \chi_{3}\\\midrule
11& 0& 0&2\\ 
 47& 2& 0& 2\\
 83& 2& 2& 4\\
 119& 2& 2& 6\\
 155& 4& 4& 8\\
191& 6& 6& 12\\ 
227& 8& 8& 18\\
 263& 12& 12& 22\\
 299& 16& 16& 32\\
 335& 22& 20& 44
 \\\bottomrule
\end{tabular}\end{center}
 \begin{center}
     \caption{  $K^{(9)}_7$, $X=A_8^{3}  $}\label{tab:dec:9_7}
    \vspace{0.1cm}

 \begin{tabular}{c|RRRR}
 \toprule  & \chi_{1}& \chi_{2}& \chi_{3} \\\midrule
23& 0& 2& 0\\ 
59& 0& 0& 2\\
95& 2& 2& 2\\
131& 2& 2& 4\\
167& 2& 4& 6\\
203& 4& 4& 8\\
239& 8& 6& 12\\
275& 8& 8& 18\\
311& 10& 12& 22\\
347& 16& 16& 30\\
 \bottomrule
\end{tabular}\end{center}
\end{minipage}
\begin{minipage}[t]{0.5\linewidth}
 \begin{center}
     \caption{  $K^{(9)}_2$, $X=A_8^{3}  $}\label{tab:dec:9_2}
    \vspace{0.1cm}

 \begin{tabular}{c|RRR}
 \toprule  & \chi_{5}& \chi_{6}& \chi_{7} \\\midrule
32&0&0&2\\68&0&2&2\\104&2&2&2\\140&2&2&6\\176&4&2&6\\212&4&6&10\\248&6&6&14
\\284&8&10&18\\320&14&12&24\\356&16&18&34
  \\\bottomrule
\end{tabular}\end{center}
 \begin{center}
     \caption{  $K^{(9)}_4$, $X=A_8^{3}  $}\label{tab:dec:9_4}
    \vspace{0.1cm}

  \begin{tabular}{c|RRR}
   \toprule  & \chi_{5}& \chi_{6}& \chi_{7} \\\midrule
 20&2&0&0\\56&2&2&2\\92&2&0&4\\128&2&4&6\\164&6&4&8\\200&6&8&14\\236&10&8&16\\272&12&14&26\\308&18&14&34\\344&22&26&46 \\\bottomrule
\end{tabular}\end{center}
 \begin{center}
     \caption{  $K^{(9)}_8$, $X=A_8^{3}  $}\label{tab:dec:9_8}
    \vspace{0.1cm}

 \begin{tabular}{c|RRR}
 \toprule  & \chi_{5}& \chi_{6}& \chi_{7} \\\midrule
8&0&2&0\\44&2&0&0\\80&0&2&2\\116&2&0&0\\152&0&4&4\\188&2&0&4\\224&2&6&6\\260&6&2&6\\296&4&8&12\\332&10&4&12\\
  \bottomrule \end{tabular}\end{center}
\end{minipage}\end{table}

\begin{table} \begin{minipage}[t]{0.5\linewidth}
 \begin{center}
     \caption{  $K^{(9)}_3$, $X=A_8^{3}  $}\label{tab:dec:9_3}
    \vspace{0.1cm}

 \begin{tabular}{c|RRRR}
 \toprule  & \chi_{1}& \chi_{2}& \chi_{3} \\\midrule
27&0&0&2\\63&0&2&2\\99&2&2&4\\135&4&2&6\\171&4&4&8\\207&6&6&12\\243&8&8&18\\279&12&12&24\\315&16&16&32\\351&20&22&42\\387&28&28&56\\423&38&36&74\\459&48&48&96\\495&60&62&122\\531&78&78&156\\567&100&98&198\\603&124&124&248\\639&154&156&310\\675&192&192&386       \\\bottomrule
\end{tabular}\end{center}
\end{minipage}
\begin{minipage}[t]{0.5\linewidth}
 \begin{center}
     \caption{  $K^{(9)}_6$, $X=A_8^{3}  $}\label{tab:dec:9_6}
    \vspace{0.1cm}

 \begin{tabular}{c|RRR}
  \toprule  & \chi_{4}& \chi_{5}& \chi_{6} \\\midrule
 0&1&0&0\\36&0&2&2\\72&2&0&2\\108&2&4&4\\144&4&2&6\\180&4&6&10\\216&8&4&12\\252&8&12&20\\288&14&10&24\\324&16&20&36\\360&24&20&44\\396&28&34&62\\432&42&36&76\\468&48&56&104\\504&68&60&128\\540&80&88&168\\576&108&98&206\\612&128&138&266\\648&168&156&324\\\bottomrule
\end{tabular}\end{center}
\end{minipage}\end{table}

\clearpage

\providecommand{\href}[2]{#2}\begingroup\raggedright\endgroup


\begin{thebibliography}{10}

\bibitem{UM}
M.~C.~N. Cheng, J.~F.~R. Duncan, and J.~A. Harvey, ``{Umbral Moonshine},''
  \href{http://dx.doi.org/10.4310/CNTP.2014.v8.n2.a1}{{\em Commun. Number
  Theory Phys.} {\bf 8} (2014) no.~2, 101--242},
  \href{http://arxiv.org/abs/1204.2779}{{\tt arXiv:1204.2779 [math.RT]}}.
\url{http://dx.doi.org/10.4310/CNTP.2014.v8.n2.a1}.

\bibitem{MUM}
M.~C.~N. Cheng, J.~F.~R. Duncan, and J.~A. Harvey, ``{Umbral Moonshine and the
  Niemeier Lattices},'' {\em Research in the Mathematical Sciences} {\bf 1}
  (2014) no.~3, 1--81,
\href{http://arxiv.org/abs/1307.5793}{{\tt arXiv:1307.5793 [math.RT]}}.

\bibitem{Nie_DefQdtFrm24}
H.-V. Niemeier, ``Definite quadratische {F}ormen der {D}imension {$24$} und
  {D}iskriminante {$1$},'' {\em J. Number Theory} {\bf 5} (1973)  142--178.

\bibitem{MR1662447}
J.~H. Conway and N.~J.~A. Sloane,
  \href{http://dx.doi.org/10.1007/978-1-4757-6568-7}{{\em Sphere packings,
  lattices and groups}}, vol.~290 of {\em Grundlehren der Mathematischen
  Wissenschaften [Fundamental Principles of Mathematical Sciences]}.
\newblock Springer-Verlag, New York, third~ed., 1999.
\newblock \url{http://dx.doi.org/10.1007/978-1-4757-6568-7}.
\newblock With additional contributions by E. Bannai, R. E. Borcherds, J.
  Leech, S. P. Norton, A. M. Odlyzko, R. A. Parker, L. Queen and B. B. Venkov.

\bibitem{umrec}
J.~F.~R. {Duncan}, M.~J. {Griffin}, and K.~{Ono}, ``{Proof of the Umbral
  Moonshine Conjecture},''{\em Research in the Mathematical Sciences} {\bf 2}
  (Mar., 2015)  , \href{http://arxiv.org/abs/1503.01472}{{\tt arXiv:1503.01472
  [math.RT]}}.

\bibitem{MR554399}
J.~H. Conway and S.~P. Norton, ``Monstrous moonshine,''
  \href{http://dx.doi.org/10.1112/blms/11.3.308}{{\em Bull. London Math. Soc.}
  {\bf 11} (1979) no.~3, 308--339}.
  \url{http://dx.doi.org/10.1112/blms/11.3.308}.

\bibitem{borcherds_monstrous}
R.~E. Borcherds, ``{Monstrous moonshine and monstrous Lie superalgebras},''
  {\em Invent. Math.} {\bf 109, No.2} (1992)  405--444.

\bibitem{FLMPNAS}
I.~B. Frenkel, J.~Lepowsky, and A.~Meurman, ``A natural representation of the
  {F}ischer-{G}riess {M}onster with the modular function {$J$} as character,''
  {\em Proc. Nat. Acad. Sci. U.S.A.} {\bf 81} (1984) no.~10, Phys. Sci.,
  3256--3260.

\bibitem{FLMBerk}
I.~B. Frenkel, J.~Lepowsky, and A.~Meurman, ``A moonshine module for the
  {M}onster,'' in {\em Vertex operators in mathematics and physics (Berkeley,
  Calif., 1983)}, vol.~3 of {\em Math. Sci. Res. Inst. Publ.}, pp.~231--273.
\newblock Springer, New York, 1985.

\bibitem{FLM}
I.~B. Frenkel, J.~Lepowsky, and A.~Meurman, {\em Vertex operator algebras and
  the {M}onster}, vol.~134 of {\em Pure and Applied Mathematics}.
\newblock Academic Press Inc., Boston, MA, 1988.

\bibitem{Dabholkar:2012nd}
A.~Dabholkar, S.~Murthy, and D.~Zagier, ``{Quantum Black Holes, Wall Crossing,
  and Mock Modular Forms},''
\href{http://arxiv.org/abs/1208.4074}{{\tt arXiv:1208.4074 [hep-th]}}.

\bibitem{Cheng2011}
M.~C.~N. Cheng and J.~F.~R. Duncan, ``{On Rademacher Sums, the Largest Mathieu
  Group, and the Holographic Modularity of Moonshine},''
  \href{http://dx.doi.org/10.4310/CNTP.2012.v6.n3.a4}{{\em Commun. Number
  Theory Phys.} {\bf 6} (2012) no.~3, 697--758},
  \href{http://arxiv.org/abs/1110.3859}{{\tt 1110.3859}}.
  \url{http://dx.doi.org/10.4310/CNTP.2012.v6.n3.a4}.

\bibitem{Sko_Thesis}
N.-P. Skoruppa, {\em {\"U}ber den {Z}usammenhang zwischen {J}acobiformen und
  {M}odulformen halbganzen {G}ewichts}.
\newblock Bonner Mathematische Schriften [Bonn Mathematical Publications], 159.
  Universit{\"a}t Bonn Mathematisches Institut, Bonn, 1985.
\newblock Dissertation, Rheinische Friedrich-Wilhelms-Universit{\"a}t, Bonn,
  1984.

\bibitem{MR2545599}
R.~Schmidt, ``A remark on a paper of {I}bukiyama and {S}koruppa,''
  \href{http://dx.doi.org/10.1007/s12188-009-0026-z}{{\em Abh. Math. Semin.
  Univ. Hambg.} {\bf 79} (2009) no.~2, 189--191}.
  \url{http://dx.doi.org/10.1007/s12188-009-0026-z}.

\bibitem{GriNik_AutFrmLorKMAlgs_II}
V.~A. Gritsenko and V.~V. Nikulin, ``Automorphic forms and {L}orentzian
  {K}ac-{M}oody algebras. {II},''
  \href{http://dx.doi.org/10.1142/S0129167X98000117}{{\em Internat. J. Math.}
  {\bf 9} (1998) no.~2, 201--275}.
  \url{http://dx.doi.org/10.1142/S0129167X98000117}.

\bibitem{MR2806099}
F.~Cl\'ery and V.~Gritsenko, ``Siegel modular forms of genus 2 with the
  simplest divisor,'' \href{http://dx.doi.org/10.1112/plms/pdq036}{{\em Proc.
  Lond. Math. Soc. (3)} {\bf 102} (2011) no.~6, 1024--1052}.
  \url{http://dx.doi.org/10.1112/plms/pdq036}.

\bibitem{MR3123592}
F.~Cl{\'e}ry and V.~Gritsenko, ``Modular forms of orthogonal type and {J}acobi
  theta-series,'' \href{http://dx.doi.org/10.1007/s12188-013-0080-4}{{\em Abh.
  Math. Semin. Univ. Hambg.} {\bf 83} (2013) no.~2, 187--217}.
  \url{http://dx.doi.org/10.1007/s12188-013-0080-4}.

\bibitem{GSZ_ThtBlk}
V.~{Gritsenko}, N.-P. {Skoruppa}, and D.~{Zagier}, ``Theta blocks,'' {\em
  (preprint)}  .

\bibitem{MR2379341}
T.~Ibukiyama and N.-P. Skoruppa, ``{A Vanishing Theorem for Siegel Modular
  Forms of Weight One},'' \href{http://dx.doi.org/10.1007/BF03173501}{{\em Abh.
  Math. Sem. Univ. Hamburg} {\bf 77} (2007)  229--235}.
  \url{http://dx.doi.org/10.1007/BF03173501}.

\bibitem{IbuSko_Cor}
T.~Ibukiyama and N.-P. Skoruppa, ``Correction to ``{A} {V}anishing {T}heorem
  for {S}iegel {M}odular {F}orms of {W}eight {O}ne'','' {\em (preprint)}  .

\bibitem{mnstmlts}
J.~F.~R. {Duncan}, M.~J. {Griffin}, and K.~{Ono}, ``{Moonshine},'' {\em
  Research in the Mathematical Sciences} {\bf 2} (2015) no.~11, ,
  \href{http://arxiv.org/abs/1411.6571}{{\tt arXiv:1411.6571 [math.RT]}}.

\bibitem{MR2512363}
N.-P. Skoruppa, \href{http://dx.doi.org/10.1017/CBO9780511543371.013}{``Jacobi
  forms of critical weight and {W}eil representations,''} in {\em Modular forms
  on {S}chiermonnikoog}, pp.~239--266.
\newblock Cambridge Univ. Press, Cambridge, 2008.
\newblock \url{http://dx.doi.org/10.1017/CBO9780511543371.013}.

\bibitem{MR3539377}
T.~Gannon, ``Much ado about {M}athieu,''
  \href{http://dx.doi.org/10.1016/j.aim.2016.06.014}{{\em Adv. Math.} {\bf 301}
  (2016)  322--358}. \url{http://dx.doi.org/10.1016/j.aim.2016.06.014}.

\bibitem{MR3108775}
M.~R. Gaberdiel, D.~Persson, H.~Ronellenfitsch, and R.~Volpato, ``Generalized
  {M}athieu {M}oonshine,''
  \href{http://dx.doi.org/10.4310/CNTP.2013.v7.n1.a5}{{\em Commun. Number
  Theory Phys.} {\bf 7} (2013) no.~1, 145--223}.
  \url{http://dx.doi.org/10.4310/CNTP.2013.v7.n1.a5}.

\bibitem{Cheng:2016nto}
M.~C.~N. Cheng, P.~de~Lange, and D.~P.~Z. Whalen, ``{Generalised Umbral
  Moonshine},''
\href{http://arxiv.org/abs/1608.07835}{{\tt arXiv:1608.07835 [math.RT]}}.

\bibitem{Cheng:2014fk}
M.~C.~N. Cheng and J.~F.~R. Duncan,
  \href{http://dx.doi.org/10.1007/978-3-662-43831-2_6}{``{Rademacher Sums and
  Rademacher Series},''} in {\em Conformal Field Theory, Automorphic Forms and
  Related Topics}, W.~Kohnen and R.~Weissauer, eds., vol.~8 of {\em
  Contributions in Mathematical and Computational Sciences}, pp.~143--182.
\newblock Springer Berlin Heidelberg, 2014.
\newblock \url{http://dx.doi.org/10.1007/978-3-662-43831-2_6}.

\bibitem{2014arXiv1406.0571W}
D.~{Whalen}, ``{Vector-Valued Rademacher Sums and Automorphic Integrals},''{\em
  ArXiv e-prints} (June, 2014)  , \href{http://arxiv.org/abs/1406.0571}{{\tt
  arXiv:1406.0571 [math.NT]}}.

\bibitem{MR1277050}
V.~Gritsenko, ``Irrationality of the moduli spaces of polarized abelian
  surfaces,'' \href{http://dx.doi.org/10.1155/S1073792894000267}{{\em Internat.
  Math. Res. Notices} (1994) no.~6, 235 ff., approx.\ 9 pp.\ (electronic)}.
  \url{http://dx.doi.org/10.1155/S1073792894000267}.

\bibitem{MR0469872}
E.~Freitag, ``Siegelsche {M}odulfunktionen,'' {\em Jber. Deutsch.
  Math.-Verein.} {\bf 79} (1977) no.~3, 79--86.

\bibitem{feingold_frenkel}
A.~J. Feingold and I.~B. Frenkel, ``{A hyperbolic Kac-Moody algebra and the
  theory of Siegel modular forms of genus 2},'' {\em J. Math. Ann.} {\bf 263}
  (1983)  87--144.

\bibitem{MR582704}
H.~Maa{\ss}, ``\"{U}ber ein {A}nalogon zur {V}ermutung von
  {S}aito-{K}urokawa,'' \href{http://dx.doi.org/10.1007/BF01389899}{{\em
  Invent. Math.} {\bf 60} (1980) no.~1, 85--104}.
  \url{http://dx.doi.org/10.1007/BF01389899}.

\bibitem{MR1345176}
V.~Gritsenko,
  \href{http://dx.doi.org/10.1017/CBO9780511661990.008}{``Arithmetical lifting
  and its applications,''} in {\em Number theory ({P}aris, 1992--1993)},
  vol.~215 of {\em London Math. Soc. Lecture Note Ser.}, pp.~103--126.
\newblock Cambridge Univ. Press, Cambridge, 1995.
\newblock \url{http://dx.doi.org/10.1017/CBO9780511661990.008}.

\bibitem{MR1030139}
K.~G. O'Grady, ``On the {K}odaira dimension of moduli spaces of abelian
  surfaces,'' {\em Compositio Math.} {\bf 72} (1989) no.~2, 121--163.
  \url{http://www.numdam.org/item?id=CM_1989__72_2_121_0}.

\bibitem{MR1827859}
M.~Gross and S.~Popescu, ``The moduli space of {$(1,11)$}-polarized abelian
  surfaces is unirational,''
  \href{http://dx.doi.org/10.1023/A:1017518027822}{{\em Compositio Math.} {\bf
  126} (2001) no.~1, 1--23}. \url{http://dx.doi.org/10.1023/A:1017518027822}.

\bibitem{MR3031888}
C.~Poor and D.~S. Yuen, ``The cusp structure of the paramodular groups for
  degree two,'' \href{http://dx.doi.org/10.4134/JKMS.2013.50.2.445}{{\em J.
  Korean Math. Soc.} {\bf 50} (2013) no.~2, 445--464}.
  \url{http://dx.doi.org/10.4134/JKMS.2013.50.2.445}.

\bibitem{MR3283174}
V.~Gritsenko, C.~Poor, and D.~S. Yuen, ``Borcherds products everywhere,''
  \href{http://dx.doi.org/10.1016/j.jnt.2014.07.028}{{\em J. Number Theory}
  {\bf 148} (2015)  164--195}.
  \url{http://dx.doi.org/10.1016/j.jnt.2014.07.028}.

\bibitem{MR3498287}
J.~Breeding, II, C.~Poor, and D.~S. Yuen, ``Computations of spaces of
  paramodular forms of general level,''
  \href{http://dx.doi.org/10.4134/JKMS.j150219}{{\em J. Korean Math. Soc.} {\bf
  53} (2016) no.~3, 645--689}. \url{http://dx.doi.org/10.4134/JKMS.j150219}.

\bibitem{LopesCardoso:1996zj}
G.~Lopes~Cardoso, ``{Perturbative gravitational couplings and Siegel modular
  forms in D = 4, N=2 string models},'' {\em Nucl.Phys.Proc.Suppl.} {\bf 56B}
  (1997)  94--101,
\href{http://arxiv.org/abs/hep-th/9612200}{{\tt arXiv:hep-th/9612200
  [hep-th]}}.

\bibitem{MR1668093}
C.~D.~D. Neumann, ``The elliptic genus of {C}alabi-{Y}au {$3$}- and
  {$4$}-folds, product formulae and generalized {K}ac-{M}oody algebras,''
  \href{http://dx.doi.org/10.1016/S0393-0440(98)00015-1}{{\em J. Geom. Phys.}
  {\bf 29} (1999) no.~1-2, 5--12}.
  \url{http://dx.doi.org/10.1016/S0393-0440(98)00015-1}.

\bibitem{MR3158917}
C.~Nazaroglu, ``Jacobi forms of higher index and paramodular groups in
  {$\mathcal{N}=2$}, {$D=4$} compactifications of string theory,'' {\em J. High
  Energy Phys.} (2013) no.~12, 074, front matter + 49.

\bibitem{omjt}
M.~C.~N. {Cheng} and J.~F.~R. {Duncan}, ``{Optimal Mock Jacobi Theta
  Functions},''{\em ArXiv e-prints} (May, 2016)  ,
  \href{http://arxiv.org/abs/1605.04480}{{\tt arXiv:1605.04480 [math.NT]}}.

\bibitem{MR3582425}
M.~J. Griffin and M.~H. Mertens, ``A proof of the {T}hompson moonshine
  conjecture,'' \href{http://dx.doi.org/10.1186/s40687-016-0084-7}{{\em Res.
  Math. Sci.} {\bf 3} (2016)  3:36}.
  \url{http://dx.doi.org/10.1186/s40687-016-0084-7}.

\bibitem{BruFun_TwoGmtThtLfts}
J.~H. Bruinier and J.~Funke, ``On two geometric theta lifts,''
  \href{http://dx.doi.org/10.1215/S0012-7094-04-12513-8}{{\em Duke Math. J.}
  {\bf 125} (2004) no.~1, 45--90}.
  \url{http://dx.doi.org/10.1215/S0012-7094-04-12513-8}.

\bibitem{MR0332663}
G.~Shimura, ``On modular forms of half integral weight,'' {\em Ann. of Math.
  (2)} {\bf 97} (1973)  440--481.

\bibitem{GAP4}
The GAP~Group, {\em {GAP -- Groups, Algorithms, and Programming, Version 4.4}},
  2005.
\newblock \verb+(http://www.gap-system.org)+.

\end{thebibliography}
\end{document}